\documentclass[11pt,a4paper]{article}
\linespread{1.15}

\usepackage{amsthm}
\usepackage{lmodern}
\usepackage{amsmath}
\usepackage{amssymb}
\usepackage{amsthm}
\usepackage{cite}
\usepackage{doi}
\usepackage{enumerate}
\usepackage{epsfig}
\usepackage{float}
\usepackage{geometry}
\usepackage{graphicx}
\usepackage{subcaption}
\usepackage{hyperref}
\usepackage[mathscr]{eucal}
\usepackage{mathtools}
\usepackage{stackengine,scalerel}
\usepackage{tikz}
\usepackage{newtxtext,newtxmath}   % Replacement for txfonts
\usepackage{microtype}
\stackMath

% Number by section

\newtheorem{proposition}{Proposition}[section]
\newtheorem{corollary}{Corollary}[section]
\newtheorem{lemma}{Lemma}[section]
\newtheorem{definition}{Definition}[section]
\newtheorem{discussion}{Discussion}[section]
\newtheorem{example}{Example}[section]
\newtheorem{remark}{Remark}[section]

\newcommand{\pmfsub}[1]{{\smash{#1}\vphantom{XYZ}}}
\newcommand{\pmf}[1]{\mathsf{#1}}
\newcommand{\pmfOf}[1]{\pmf{P}_{\pmfsub{#1}}}  % the PMF.
\newcommand{\CondpmfOf}[2]{\pmf{P}_{\pmfsub{#1} \mid \pmfsub{#2}}}
     % marginal of a PMF

% Conditional marginal of a PMF

\DeclareMathOperator{\Vertex}{\mathsf{V}}
\DeclareMathOperator{\Edge}{\mathsf{E}}
\DeclareMathOperator{\Cycle}{\mathsf{C}}

\DeclareMathOperator{\Complete}{\mathsf{K}}
\DeclareMathOperator{\Entr}{H}
\DeclareMathOperator{\Clique}{\omega}
\DeclareMathOperator{\Chromatic}{\chi}
\DeclareMathOperator{\Path}{\mathsf{P}}
\DeclareMathOperator{\Star}{\mathsf{S}}

\newcommand{\Gr}[1]{\mathsf{#1}}                          % graph
              % complement graph
\newcommand{\V}[1]{\Vertex(#1)}                           % vertex set of a graph
\newcommand{\E}[1]{\Edge(#1)}                             % edge set of a graph
\newcommand{\CoG}[1]{\Complete_{#1}}                      % complete graph
\newcommand{\CoBG}[2]{\Complete_{#1,#2}}                  % Complete bipartite graph
\newcommand{\CG}[1]{\Cycle_{#1}}                          % cycle graph of a given length
                      % neighbors set of a given vertex
\newcommand{\card}[1]{|#1|}                               % cardinality
\newcommand{\bigcard}[1]{\bigl|#1\bigr|}
\newcommand{\PathG}[1]{\Path_{#1}}                        % path of given length
\newcommand{\StarG}[1]{\Star_{#1}}                        % star graph

                   % strongly regular graph
\newcommand{\OneTo}[1]{[#1]}
\newcommand{\set}{\ensuremath{\mathcal}}
\newcommand{\es}{\varnothing}                             % the empty set
        % complement of a set
\newcommand{\naturals}{\ensuremath{\mathbb{N}}}           % set of natural numbers
\newcommand{\Reals}{\mathbb R}                            % set of real numbers
\newcommand{\midnew}{\hspace*{-0.05cm} \mid \hspace*{-0.05cm}}

                       % independence number

         % fractional independence number of a graph
   % relative fractional independence number of two graphs
\newcommand{\clnum}[1]{\Clique(#1)}                       % clique number

         % fractional clique number
\newcommand{\chrnum}[1]{\Chromatic(#1)}                   % chromatic number

     % fractional chromatic number

                          % edge-cover number
            % fractional edge-cover number

\newcommand{\Hom}[2]{\mathrm{Hom}(#1,#2)}                 % set of homomorphisms from graph 1 to graph 2.
\newcommand{\homcount}[2]{\mathrm{hom}(#1,#2)}            % number of homomorphisms from graph 1 to graph 2.

% Shannon entropy of discrete random variables:
\newcommand{\Ent}[1]{\Entr(#1)}

\newcommand{\BigEnt}[1]{\Entr\Bigl(#1\Bigr)}

% Conditional Shannon Entropy:
\newcommand{\EntCond}[2]{\Entr(#1 \midnew \kern0.1em #2)}
\newcommand{\bigEntCond}[2]{\Entr\bigl(#1 | \kern0.1em #2\bigr)}
\newcommand{\BigEntCond}[2]{\Entr\Bigl(#1 \kern-0.1em \bigm| \kern-0.1em #2 \Bigr)}
\newcommand{\biggEntCond}[2]{\Entr\biggl(#1 \kern-0.1em \Bigm| \kern-0.1em #2 \biggr)}
\newcommand{\BiggEntCond}[2]{\Entr\Biggl(#1 \kern-0.1em \biggm| \kern-0.1em #2 \Biggr)}

% Expectatons w.r.t. ...

\newcommand{\Expecwrt}[2]{\mathbb{E}_{#1}[#2]}

% Conditional expectations:
\newcommand{\ExpecCondwrt}[3]{\mathbb{E}_{#1}[#2 \kern0.1em|\kern0.1em #3]}
\newcommand{\bigExpecCondwrt}[3]{\mathbb{E}_{#1}\bigl[#2 \kern-0.1em \bigm| \kern-0.1em #3\bigr]}
\newcommand{\BigExpecCondwrt}[3]{\mathbb{E}_{#1}\Bigl[#2 \kern-0.1em \Bigm| \kern-0.1em #3\Bigr]}
\newcommand{\biggExpecCondwrt}[3]{\mathbb{E}_{#1}\biggl[#2 \kern-0.1em \biggm| \kern-0.1em #3\biggr]}
\newcommand{\BiggExpecCondwrt}[3]{\mathbb{E}_{#1}\Biggl[#2 \kern-0.1em \Biggm| \kern-0.1em #3\Biggr]}

% Indicator function

\hypersetup{
  colorlinks = true,
  urlcolor   = red,
  linkcolor  = blue,
  citecolor  = red
}

% Force IEEEtran to use this numbering style
\def\@seccntformat#1{\csname the#1\endcsname\quad}
\makeatother

%---------Format-------------------------

\geometry{left=0.7in, right=0.7in, top=0.9in, bottom=0.9in}

\begin{document}
\setlength{\baselineskip}{1\baselineskip}

\setcounter{page}{1}

\title{\huge{Counting Graph Homomorphisms in Bipartite Settings}}

\author{Igal Sason
\thanks{
Igal Sason is with the Viterbi Faculty of Electrical and Computer Engineering and the Department of Mathematics
(secondary affiliation) at the Technion --- Israel Institute of Technology, Haifa 3200003, Israel. Email: eeigal@technion.ac.il.
This work was presented in part at the {\em 13th European Conference on Combinatorics,
Graph Theory and Applications (EuroComb '25)}, Budapest, Hungary, August 25--29, 2025.}}

\maketitle

\thispagestyle{empty}
\setcounter{page}{1}

\vspace*{-0.5cm}
\begin{abstract}
This paper studies the problem of counting homomorphisms from a bipartite source graph to a bipartite target graph.
An exact formula is first derived for the number of homomorphisms from a complete bipartite graph to a general
bipartite graph. Although exact, its evaluation is typically computationally intensive, and a computationally tractable 
combinatorial lower bound is derived. When the target graph contains no 4-cycles, the lower bound simplifies 
and becomes exact. Two additional lower bounds on the number of homomorphisms from a complete bipartite graph to an 
arbitrary bipartite graph are derived using properties of Shannon entropy. The first depends only on the sizes of the 
partite sets in the source and target graphs, together with the edge density of the target graph.
The second further incorporates the degree profiles of the partite sets of the target graph, thereby strengthening the
first bound. Both entropy-based bounds improve upon the inequality implied by the validity of Sidorenko’s conjecture
for complete bipartite source graphs.
The lower bounds for complete bipartite source graphs are combined with new auxiliary results to derive general lower
bounds on homomorphism counts between arbitrary bipartite graphs. Furthermore, a known reverse Sidorenko inequality is
employed to derive a corresponding upper bound. This upper bound is attained when the source graph is a disjoint union of
complete bipartite graphs, and admits a simple closed-form expression when the target graph contains no 4-cycles.
Numerical results compare the new computationally tractable bounds with exact homomorphism counts in cases where exact 
computation is feasible.
\end{abstract}

\noindent {\bf Keywords.}
Graph homomorphisms; Bipartite graphs; Homomorphism counts and densities; Shannon entropy; Sidorenko’s conjecture.

\noindent {\bf 2020 Mathematics Subject Classification (MSC).} 05A15, 05C30, 05C60, 05C75, 05C80, 94A17.

\section{Introduction}
\label{section: introduction}

Graph homomorphisms, defined as adjacency-preserving mappings from the vertices of one graph to those of another, are
fundamental constructs in graph theory. This notion unifies various concepts such as graph colorings, independent sets,
and clique covers, providing a versatile framework for studying combinatorial properties of graphs (see, e.g.,
\cite{FederVardi98,HellN26,HellN21,Lovasz12,Zhao17}).

Formally, a homomorphism from a source graph $\Gr{F}$ to a target graph $\Gr{G}$ is a vertex mapping that maps adjacent vertices in $\Gr{F}$
to adjacent vertices in $\Gr{G}$. The set of all such graph homomorphisms is denoted by $\Hom{\Gr{F}}{\Gr{G}}$, and its cardinality
is denoted by $\homcount{\Gr{F}}{\Gr{G}} \triangleq \bigcard{\Hom{\Gr{F}}{\Gr{G}}}$.
For a fixed graph $\Gr{G}$ and an instance graph $\Gr{F}$, determining the existence of a homomorphism from $\Gr{F}$ to $\Gr{G}$ is generally
computationally hard. A study of the existence of graph homomorphisms via semidefinite programming can be found in \cite{GodsilRRSV19}.
Conversely, no-homomorphism lemmas are results that provide necessary conditions for the existence of a graph homomorphism from
$\Gr{F}$ to $\Gr{G}$ (see, e.g., \cite{AlbertsonC85,NaserasrSZ21}).
If $\Gr{G}$ is bipartite or contains a loop, then the problem of deciding if there exists a homomorphism from $\Gr{F}$ to the fixed $\Gr{G}$
is solvable in polynomial time; otherwise, it is NP-complete \cite{HellN26,HellN90}.
Thus, unless $\text{P} = \text{NP}$, the homomorphism problem exhibits a complexity dichotomy \cite{DyerG00,DyerG04,Hell06,CaiCL15,CurticapeanDM17}.

A homomorphism from a bipartite graph $\Gr{F}$ to a graph $\Gr{G}$ that contains at least one edge always exists: one may map all vertices
in one partite set of $\Gr{F}$ to one endpoint of an edge of $\Gr{G}$, and all vertices in the other partite set to the other endpoint. In
contrast, if $\Gr{F}$ is non-bipartite and $\Gr{G}$ is bipartite, then no homomorphism from $\Gr{F}$ to $\Gr{G}$ exists.
Indeed, the existence of such a homomorphism would imply that $\chi(\Gr{F}) \leq \chi(\Gr{G})$, where $\chi(\cdot)$ denotes the chromatic number.
This is impossible because every non-bipartite graph has chromatic number at least~3, whereas every bipartite graph has chromatic number at most~2.
Since the identity mapping is a homomorphism from a graph to itself (i.e., an endomorphism), every non-bipartite graph $\Gr{G}$ admits a non-bipartite
graph $\Gr{F}$ such that $\homcount{\Gr{F}}{\Gr{G}} \geq 1$. Consequently, bipartiteness admits the following characterization in terms of graph
homomorphisms: a graph $\Gr{G}$ is bipartite if and only if $\homcount{\Gr{F}}{\Gr{G}} = 0$ for every non-bipartite graph $\Gr{F}$, including graphs
with loops.

A fundamental result by Lov\'{a}sz \cite{Lovasz67}, together with subsequent extensions in \cite[Section~5.4]{Lovasz12}, \cite[Theorem~4]{Sernau18},
and \cite{GarijoGN11}, asserts that for every finite graph $\Gr{F}$ (possibly with loops), the vector $\bigl(\homcount{\Gr{F}}{\Gr{G}}\bigr)_{\Gr{G}}$,
indexed by all finite graphs $\Gr{G}$, uniquely determines $\Gr{F}$ up to isomorphism. Likewise, for every finite graph $\Gr{G}$, the vector
$\bigl(\homcount{\Gr{F}}{\Gr{G}}\bigr)_{\Gr{F}}$, indexed by all finite connected graphs $\Gr{F}$, uniquely determines $\Gr{G}$ up to isomorphism.
The problem of determining graphs $\Gr{G}$, up to isomorphism, from subvectors of the homomorphism counts $\bigl(\homcount{\Gr{F}}{\Gr{G}}\bigr)_{\Gr{F}}$,
where $\Gr{F}$ ranges over restricted classes of graphs, was studied in \cite{Dvorak10}.

Counting graph homomorphisms is a central problem in combinatorics \cite{Borgs06,HellN26}, having a wide range of applications and interconnections with
extremal graph theory, theoretical computer science, communication networks, group theory, statistical physics, and graph-based machine learning
(see, e.g., \cite{BaoJBCL25,Borgs06,KoppartyR11,HellN26,HellN21,Lovasz12,BrightwellW99,Kahn02,GalvinT04,GarijoNR09,CsikvariRS22,ShamsRC19,Zhao17}).
These counts reflect structural regularities in networks, encode combinatorial constraints, and arise naturally in contexts ranging from subgraph
enumeration to the modeling of physical systems.
Counting homomorphisms from a bipartite graph to a target graph plays a central role in several areas of extremal graph theory. For instance, a
classical extremal result in \cite{Sidorenko94} asserts that among all trees $\Gr{F}$ on a fixed number $n$ of vertices, the star graph
$\Gr{F} = \CoBG{1}{n-1}$ maximizes the number of homomorphisms to any graph $\Gr{G}$. This result in extremal graph theory was given alternative
proofs in \cite{CsikvariL14,LevinP17}, and has been applied to the analysis of the degree and sensitivity of Boolean functions \cite{GopalanSW16}.

Another important line of research concerns Sidorenko's conjecture (see, e.g., \cite[Conjecture~5.0.5]{Zhao23}).
It conjectures that, for every bipartite graph $\Gr{F}$, the probability that a uniformly selected random mapping from the vertices of $\Gr{F}$ to those
of a graph $\Gr{G}$ is a homomorphism is at least the product, over all edges of $\Gr{F}$, of the probabilities that each edge is mapped to an edge of
$\Gr{G}$. This conjecture provides a lower bound on the number of homomorphisms from a bipartite source graph $\Gr{F}$ to a general target graph $\Gr{G}$.
A necessary condition for $\Gr{F}$ to satisfy Sidorenko's conjecture is that it be bipartite. Moreover, several important classes of bipartite graphs,
including trees and complete bipartite graphs, are demonstrated to satisfy the conjecture \cite[Theorems~5.11--5.12]{Zhao23}.

In a related algorithmic direction, the tree-walk algorithm introduced in \cite{CsikvariL14} provides an efficient recursive method for computing
the number of homomorphisms from a tree to an arbitrary graph.

In theoretical computer science, counting homomorphisms is intimately connected to the evaluation of conjunctive queries in relational databases
and to constraint satisfaction problems (CSPs). Conjunctive queries involving $p$ variables and $q$ constraints can be modeled by searching for
homomorphisms from $\CoBG{p}{q}$ to a target graph $\Gr{G}$, with the count corresponding to the number of satisfying assignments.
\begin{example}
{\em Consider the problem of counting homomorphisms from the complete bipartite graph $\CoBG{p}{q}$, where $p, q \in \naturals$, to
a target cycle graph $\Gr{G} \cong \CG{n}$, where $n \geq 3$, with vertex set $\V{\Gr{G}} = \{a_1, \ldots, a_n\}$ and edge set
\[
\E{\Gr{G}} = \bigl\{\{a_1, a_2\}, \ldots, \{a_{n-1}, a_n\}, \{a_n, a_1\} \bigr\}.
\]
The source graph $\CoBG{p}{q}$ has partite sets $\set{U} = \{u_1, \ldots, u_p\}$ and $\set{V} = \{v_1, \ldots, v_q\}$,
and the edge set
\[
\E{\CoBG{p}{q}} = \bigl\{\{u_k,v_\ell\}: k \in \OneTo{p}, \; \ell \in \OneTo{q} \bigr\}.
\]
To express $\Hom{\CoBG{p}{q}}{\CG{n}}$ as a CSP, we define:
\begin{itemize}
  \item Variables: $x_{u_1}, \ldots , x_{u_p}, x_{v_1}, \ldots, x_{v_q}$,
  \item Domain: $\set{D} = \{a_1, \ldots, a_n \}$,
  \item Constraints: For each edge $\{u_k, v_\ell\} \in \E{\CoBG{p}{q}}$ (i.e., for every $(k,\ell) \in \OneTo{p} \times \OneTo{q}$) impose the constraint
  that the pair $(x_{u_k}, x_{v_\ell})$ corresponds to an edge in $\E{\Gr{G}}$, i.e.,
  \[
  (x_{u_k}, x_{v_\ell}) \in \bigl\{(a_1,a_2), \ldots, (a_{n-1}, a_n), (a_n,a_1), (a_2,a_1), \ldots, (a_n, a_{n-1}), (a_1, a_n)\bigr\}, \, \forall \, (k,\ell) \in \OneTo{p} \times \OneTo{q}.
  \]
\end{itemize}
Solutions to this CSP are in one-to-one correspondence with homomorphisms $\phi \colon \V{\CoBG{p}{q}} \to \V{\CG{n}}$, where their
total number is given by (see Corollary~\ref{corollary: K_{p,q} --> cycle})
\[
\homcount{\CoBG{p}{q}}{\CG{n}} =
\begin{dcases}
n(2^p + 2^q-2),     \quad & \mbox{if $n=3$ or $n \geq 5$,} \\
2^{p+q+1},          \quad & \mbox{if $n=4$.}
\end{dcases}
\]}
\end{example}

In extremal graph theory and the study of graph limits, homomorphism counts from a source graph $\Gr{F}$ to a target
graph $\Gr{G}$ are used to formulate and prove inequalities, such as those related to Sidorenko's conjecture, and to
define graph homomorphism densities. These quantities play a central role in the analytic theory of dense graph sequences,
and are used in the study of graphons and convergence notions in graph limit theory (see, e.g., the textbooks
\cite{Lovasz12,Zhao23}). Applications also appear in random graph models, where the asymptotic behavior of
$\homcount{\Gr{F}}{\Gr{G}_n}$ for a sequence of graphs $\{\Gr{G}_n\}$ helps characterize structural
properties of random graphs.

Applying information-theoretic tools to obtain combinatorial results has proven to be fruitful. Notably,
Shannon entropy has significantly deepened the understanding of the structural and quantitative properties of combinatorial
objects by enabling concise and often elegant proofs of classical results in combinatorics (see, e.g.,
\cite[Chapter~5]{Zhao23}, \cite[Chapter~37]{AignerZ18}, \cite[Chapter~22]{Jukna11}, and \cite{ChungGFS86,ConlonKLL18,FriedgutK98,Friedgut04,
Galvin14,HatamiJS18,Kahn01,Kahn02,GalvinT04,MadimanT_IT10,Radhakrishnan97,Radhakrishnan01,Sason21,Sason21b,Sason25,Szegedy15a,Szegedy15b,WangTL23}).
Part of this work employs such information-theoretic tools, specifically Shannon entropy,
to derive lower bounds on the number of homomorphisms from a complete bipartite graph to an arbitrary bipartite graph.
This approach differs from the use of Shearer's entropy lemma, which leads to the derivation of upper bounds on graph
homomorphisms (see \cite{GalvinT04,Galvin14,MadimanT_IT10}). The entropy method has been used to prove
that Sidorenko's conjecture holds for some bipartite graphs (see, e.g., \cite[Section~5.5]{Zhao23}), and the derivation of
entropy-based lower bounds in this work offers further insight into the interplay between combinatorial structures and
information-theoretic principles.

This paper studies the problem of counting homomorphisms from a bipartite source graph to a bipartite target graph,
and is organized as follows. Section~\ref{section: preliminaries} presents essential preliminaries.
Section~\ref{section: exact expressions and comb. bounds} first derives an exact expression for the number of
homomorphisms from a complete bipartite graph to an arbitrary bipartite graph (Proposition~\ref{prop: exact counting: CoBG --> BG});
while exact, this expression is generally computationally intensive. This section then presents a computationally tractable combinatorial
lower bound for this quantity (Proposition~\ref{proposition: LB and exact hom(CoBG,G)}), which becomes exact when the target
bipartite graph contains no 4-cycles.
Section~\ref{section: IT bounds} derives two additional lower bounds on the number of homomorphisms from a complete bipartite
graph to an arbitrary bipartite graph, using properties of Shannon entropy. The first bound depends only on the sizes of the partite
sets of the source and target graphs and the edge density of the target graph (Proposition~\ref{prop.: IT-LB}). The second is a
refinement that also incorporates the degree profiles within the partite sets of the target graph (Proposition~\ref{prop.: refined IT-LB}),
yielding a strengthening of the first. Both entropy-based bounds are demonstrated to improve upon the inequality implied by the validity
of Sidorenko’s conjecture for complete bipartite graphs (Discussion~\ref{discussion: Comparison of the 1st IT LB to Sidorenko's lower bound}).
Section~\ref{section: boounds on homomorphism counts between bipartite graphs} introduces upper and lower bounds on homomorphism counts
between arbitrary bipartite graphs (Propositions~\ref{proposition: UB on hom(BG, BG)} and~\ref{proposition: LB on hom(BG, BG)}), based
on the results of Sections~\ref{section: exact expressions and comb. bounds} and~\ref{section: IT bounds}, a reverse Sidorenko inequality
from \cite{SahSSZ20}, and new auxiliary results derived in the same section.
Finally, Section~\ref{section: numerical results} presents numerical comparisons between the proposed easy-to-compute bounds and exact
homomorphism counts, whenever the latter are computationally feasible.

\section{Preliminaries}
\label{section: preliminaries}

Let $\V{\Gr{G}}$ and $\E{\Gr{G}}$ denote the vertex set and edge set of a graph $\Gr{G}$, respectively.
For adjacent vertices $u,v \in \V{\Gr{G}}$, let $e = \{u,v\} \in \E{\Gr{G}}$ denote the edge connecting them.
Throughout this paper, the shorthand $\OneTo{n} \triangleq \{1, \ldots, n\}$ is used for all $n \in \naturals$,
and $\es$ denotes the empty set.

Let $\Gr{F}$ and $\Gr{G}$ be finite and undirected graphs with no multiple edges between any pair of distinct vertices.
A {\em homomorphism} from $\Gr{F}$ to $\Gr{G}$, denoted by $\Gr{F} \to \Gr{G}$, is a mapping $\phi \colon \V{\Gr{F}} \to \V{\Gr{G}}$
(not necessarily injective) such that every edge of $\Gr{F}$ is mapped to an edge of $\Gr{G}$, i.e., $\{\phi(u), \phi(v)\} \in \E{\Gr{G}}$
whenever $\{u,v\} \in \E{\Gr{F}}$. No condition is imposed on non-edges of $\Gr{F}$: a pair of non-adjacent vertices in $\Gr{F}$
may be mapped to a non-edge, to an edge, or even to the same vertex of $\Gr{G}$.
The graphs $\Gr{F}$ and $\Gr{G}$ are typically referred to as the source and target graphs, respectively. The source graph
$\Gr{F}$ is usually a simple graph (with no loops or multiple edges), whereas the target $\Gr{G}$ may be allowed to contain loops.
Unless specified otherwise, all graphs considered in this paper are assumed to be simple, finite, and undirected.

A {\em copy} of $\Gr{F}$ in $\Gr{G}$ is a subgraph of $\Gr{G}$ that is isomorphic to $\Gr{F}$. In contrast, the
image of a graph homomorphism $\Gr{F} \to \Gr{G}$ need not correspond to such a copy since the homomorphism is not
required to be injective on vertices.

Let $\Hom{\Gr{F}}{\Gr{G}}$ denote the set of all homomorphisms from $\Gr{F}$ to $\Gr{G}$, and let
$\homcount{\Gr{F}}{\Gr{G}} \triangleq \bigcard{\Hom{\Gr{F}}{\Gr{G}}}$
be the number of such graph homomorphisms (using lowercase letters to denote cardinalities). These
are called {\em homomorphism numbers}. As illustrative examples, for every $n \in \naturals$,
$\homcount{\CoG{n}}{\Gr{G}}$ is equal to $n!$ times the number of complete subgraphs of $\Gr{G}$ on
$n$ vertices, and $\homcount{\Gr{G}}{\CoG{n}}$ is equal to the number of proper $n$-colorings of $\Gr{G}$,
in which adjacent vertices are assigned distinct colors from a set of $n$ colors.
Let $\clnum{\Gr{G}}$ and $\chrnum{\Gr{G}}$ denote the clique number and chromatic number of a graph
$\Gr{G}$, respectively. Consequently, $\clnum{\Gr{G}}$ equals the largest $n \in \naturals$ for which
there exists a homomorphism $\CoG{n} \to \Gr{G}$, and $\chrnum{\Gr{G}}$ equals the smallest $n \in \naturals$
such that a homomorphism $\Gr{G} \to \CoG{n}$ exists. In this way, graph homomorphisms provide a unifying
framework for characterizing classical graph invariants. These include the independence, clique,
and chromatic numbers of a graph, all of which are NP-hard problems \cite{GareyJ79}.

Some simple and useful identities, which are used to simplify the computation of homomorphism numbers, are next stated
(see, e.g., \cite[Section~5.2.3]{Lovasz12} and \cite{Borgs06}).
\begin{enumerate}[(1)]
\item If $\Gr{F}$ is a disjoint union of $r$ subgraphs $\Gr{F}_1, \ldots, \Gr{F}_r$, for some $r \geq 1$, then
\begin{align}
\label{eq1: homomorphism numbers}
\homcount{\Gr{F}}{\Gr{G}} = \prod_{j=1}^r \homcount{\Gr{F}_j}{\Gr{G}}.
\end{align}
\item If $\Gr{F}$ is a connected graph, and $\Gr{G}$ is a disjoint union of $r$ subgraphs $\Gr{G}_1, \ldots, \Gr{G}_r$,
for some $r \geq 1$, then
\begin{align}
\label{eq2: homomorphism numbers}
\homcount{\Gr{F}}{\Gr{G}} = \sum_{j=1}^r \homcount{\Gr{F}}{\Gr{G}_j}.
\end{align}
\item For two simple graphs $\Gr{G}_1$ and $\Gr{G}_2$, their {\em categorical product} $\Gr{G}_1 \times \Gr{G}_2$
(a.k.a. their {\em tensor product} or {\em Kronecker product}) is defined to be a graph whose vertex set is
the Cartesian product $\V{\Gr{G}} = \V{\Gr{G}_1} \times \V{\Gr{G}_2}$, in which two vertices $(i_1, i_2), \,
(j_1, j_2) \in \V{\Gr{G}}$ are adjacent if $\{i_1, j_1\} \in \E{\Gr{G}_1}$ and $\{i_2, j_2\} \in \V{\Gr{G}_2}$.
For $r \geq 3$, the categorical product $\Gr{G}_1 \times \ldots \times \Gr{G}_r$ of $r$ simple graphs is recursively
defined by $(\Gr{G}_1 \times \ldots \times \Gr{G}_{r-1}) \times \Gr{G}_r$, and this product is commutative and
associative (up to isomorphism). For all $r \geq 1$,
\begin{align}
\label{eq3: homomorphism numbers}
\homcount{\Gr{F}}{\Gr{G}_1 \times \ldots \times \Gr{G}_r} = \prod_{j=1}^r \homcount{\Gr{F}}{\Gr{G}_j}.
\end{align}
By introducing the operation of exponentiation, there is also an identity for $\homcount{\Gr{F}_1 \times \ldots \times \Gr{F}_r}{\Gr{G}}$ \cite{Lovasz67}.
\end{enumerate}

Let ${\bf{A}}_{\Gr{G}} = (A_{\Gr{G}}(i,j))_{i,j \in \OneTo{n}}$ denote the adjacency matrix of a graph $\Gr{G}$ of order $n$, assumed to have
no multiple edges but possibly containing loops.
By definition, the number of homomorphisms from a simple source graph $\Gr{F}$ to the target graph $\Gr{G}$ is given by
\begin{align}
\label{eq: 13.08.25}
\homcount{\Gr{F}}{\Gr{G}} = \sum_{\phi \colon \V{\Gr{F}} \to \V{\Gr{G}}} \; \prod_{\{u,v\} \in \E{\Gr{F}}} A_{\Gr{G}}\bigl(\phi(u),\phi(v)\bigr).
\end{align}

In addition to homomorphism numbers, we now introduce {\em homomorphism densities}, which are closely related.
\begin{definition}[Homomorphism densities]
\label{definition: Homomorphism densities}
{\em Let $\Gr{F}$ and $\Gr{G}$ be graphs, $v(\Gr{F}) \triangleq \card{ \hspace*{-0.05cm} \V{\Gr{F}}}$, and
$v(\Gr{G}) \triangleq \card{\hspace*{-0.05cm} \V{\Gr{G}}}$.
The {\em $\Gr{F}$-homomorphism density in $\Gr{G}$} (or simply $\Gr{F}$-density in $\Gr{G}$) is
the probability that a uniformly random vertex mapping $\phi \colon \V{\Gr{F}} \to \V{\Gr{G}}$ satisfies $\phi \in \Hom{\Gr{F}}{\Gr{G}}$,
i.e., it is given by
\begin{align}
\label{eq: 16.06.2025}
t(\Gr{F}, \Gr{G}) \triangleq \frac{\homcount{\Gr{F}}{\Gr{G}}}{v(\Gr{G})^{\, v(\Gr{F})}}.
\end{align}}
\end{definition}

\begin{example}[Homomorphism densities]
{\em By Definition~\ref{definition: Homomorphism densities}, we have $t(\CoG{1}, \Gr{G}) = 1$, and
\begin{align}
t(\CoG{2}, \Gr{G}) = \frac{2 \, e(\Gr{G})}{v(\Gr{G})^2}, \label{eq: edges}
\end{align}
where $e(\Gr{G}) \triangleq \card{\hspace*{-0.05cm} \E{\Gr{G}}}$. Moreover, for all $n \in \naturals$,
\begin{align}
t(\Gr{G}, \CoG{n}) = \frac{P(\Gr{G},n)}{n^{\, v(\Gr{G})}},
\end{align}
where  $P(\Gr{G}, n)$ denotes the number of proper $n$-colorings of the vertices of $\Gr{G}$, and it is a
polynomial in $n$ (depending only on $\Gr{G}$). For this reason, $P(\Gr{G}, \cdot)$ is called the
{\em chromatic polynomial of $\Gr{G}$}. By definition, if $n < \chi(\Gr{G})$, then $P(\Gr{G},n) = 0$ and
$t(\Gr{G}, \CoG{n}) = 0$.}
\end{example}

This paper relies in part on properties of the Shannon entropy for the derivation of bounds on
the number of homomorphisms in bipartite settings. We briefly provide here the standard notation
and background, following \cite{CoverT06}.

Let $X$ be a discrete random variable that takes values in a set $\set{X}$,
and let $\pmfOf{X}$ be the probability mass function (PMF) of $X$. The
{\em Shannon entropy} of $X$ is given by
\begin{align}
\label{eq: entropy}
\Ent{X} \triangleq -\sum_{x \in \set{X}} \pmfOf{X}(x) \, \log \pmfOf{X}(x),
\end{align}
where all logarithms are taken to a fixed base (e.g., base~2 or base~$\mathrm{e}$).
The exponential function, denoted by $\exp(\cdot)$, is defined as the inverse of the logarithm with
respect to the same base.

Let $X$ and $Y$ be discrete random variables with joint PMF $\pmfOf{XY}$, and conditional
PMF of $X$ given $Y$ denoted by $\CondpmfOf{X}{Y}$. The {\em conditional entropy} of $X$ given $Y$
is defined as
\begin{subequations} \label{eq: conditional entropy}
\begin{align}
\label{eq: conditional entropy 1}
\EntCond{X}{Y} & \triangleq -\sum_{(x,y) \in \set{X} \times \set{Y}}
\pmfOf{XY}(x,y) \log \CondpmfOf{X}{Y}(x|y) \\[0.1cm]
\label{eq: conditional entropy 1.5}
&= \sum_{y \in \set{Y}} \pmfOf{Y}(y) \, \EntCond{X}{Y=y}.
\end{align}
\end{subequations}
We make use of the following two basic properties of the Shannon entropy.
\begin{itemize}
\item {\em Uniform bound}: If $X$ is a discrete random variable supported on a finite set
$\set{X}$, then
\begin{align}
\label{eq: uniform bound}
\Ent{X} \leq \log \card{\set{X}},
\end{align}
with equality if and only if $X$ is equiprobable over $\set{X}$, i.e., $\pmfOf{X}(x) = \frac1{\card{\set{X}}}$ for
all $x \in \set{X}$.
\item {\em Chain rule}: The Shannon entropy of a discrete random vector $X^n \triangleq (X_1, \ldots, X_n)$ satisfies
\begin{eqnarray}
\label{eq: chain rule}
\Ent{X^n} = \sum_{i=1}^n \EntCond{X_i}{X^{i-1}},
\end{eqnarray}
where $X^{0}$ indicates the empty vector and $\EntCond{X_1}{X^{0}} = \Ent{X_1}$.
\end{itemize}

\section{Exact Expressions and Combinatorial Lower Bounds}
\label{section: exact expressions and comb. bounds}

This section begins with introducing an exact expression for the number of homomorphisms from a complete bipartite graph to an arbitrary
bipartite graph. To that end, we first define Stirling numbers of the second kind.
\begin{definition}
\label{def: Stirling number of 2nd type}
{\em Let $n, k \in \naturals$. The {\em Stirling number of the second kind}, denoted $S(n,k)$, is defined as the
number of ways to partition the set $\OneTo{n}$ into $k$ (nonempty, pairwise disjoint) subsets.
If $k \not\in \OneTo{n}$, then $S(n,k) \triangleq 0$.}
\end{definition}
By Definition~\ref{def: Stirling number of 2nd type},
$S(n, n) = 1$ since each element must be placed in its own singleton set,
and $S(n, 1) = 1$ as the entire set forms a single block.
Moreover, these numbers satisfy the recurrence relation (see \cite[Eq.~(6.3)]{GrahamKP89})
\begin{align}
\label{eq: recursive equation S}
S(n, k) = k \, S(n-1, k) + S(n-1, k-1), \quad n, k \in \naturals, \; k \leq n.
\end{align}
This recurrence equation yields the following closed-form expression
(see \cite[Eq.~(6.19)]{GrahamKP89}):
\begin{align}
\label{eq: closed-form for Stirling}
S(n,k) = \frac1{k!} \sum_{j=0}^k \biggl\{ (-1)^{k-j} \, \binom{k}{j} \, j^n \biggr\}, \quad 1 \leq k \leq n.
\end{align}

\begin{proposition}
\label{prop: exact counting: CoBG --> BG}
{\em Let $\Gr{G}$ be a bipartite graph with partite sets $\set{L}$ and $\set{R}$, and let $p, q \in \naturals$. Then,
\begin{align}
\label{eq: exact counting: CoBG --> BG}
\homcount{\CoBG{p}{q}}{\Gr{G}} = \sum_{k=1}^p \sum_{\ell=1}^q  k! \, \ell! \, S(p,k) \, S(q,\ell)
\, \bigl[N_{k,\ell}(\Gr{G}) + N_{\ell,k}(\Gr{G}) \bigr],
\end{align}
where $S(\cdot,\cdot)$ stands for the Stirling number of the second kind (see Definition~\ref{def: Stirling number of 2nd type}),
and $N_{k,\ell}(\Gr{G})$ denotes the number of unlabelled bipartite cliques in $\Gr{G}$ whose partite subsets in $\set{L}$
and $\set{R}$ have sizes $k$ and $\ell$, respectively.}
\end{proposition}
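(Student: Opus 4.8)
The plan is to organize homomorphisms $\phi \colon \CoBG{p}{q} \to \Gr{G}$ according to their image, thereby reducing the count to an enumeration of labelled bipartite cliques in $\Gr{G}$ together with a count of surjections onto a fixed image; the surjection count is what produces the factors $k!\,\ell!\,S(p,k)\,S(q,\ell)$.

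First I would establish a clean dichotomy. Write $\set{U} = \{u_1,\ldots,u_p\}$ and $\set{V} = \{v_1,\ldots,v_q\}$ for the partite sets of $\CoBG{p}{q}$. Since $\CoBG{p}{q}$ is connected and nonempty, while $\Gr{G}$ is bipartite with $\set{L} \cap \set{R} = \es$, every homomorphism $\phi$ sends one of $\set{U},\set{V}$ entirely into $\set{L}$ and the other entirely into $\set{R}$: if $\phi(u_1) \in \set{L}$, then each $v_\ell$, being a neighbour of $u_1$, maps into $\set{R}$, whence each $u_k$, being a neighbour of $v_1$, maps into $\set{L}$; if instead $\phi(u_1) \in \set{R}$, the symmetric argument gives $\phi(\set{U}) \subseteq \set{R}$ and $\phi(\set{V}) \subseteq \set{L}$. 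As $\set{U} \neq \es$ and $\set{L} \cap \set{R} = \es$, these two cases are mutually exclusive, so $\homcount{\CoBG{p}{q}}{\Gr{G}}$ is the sum of the numbers of homomorphisms of each type.

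I would then count the homomorphisms of the first type ($\phi(\set{U}) \subseteq \set{L}$, $\phi(\set{V}) \subseteq \set{R}$) by conditioning on the images $A \triangleq \phi(\set{U}) \subseteq \set{L}$ and $B \triangleq \phi(\set{V}) \subseteq \set{R}$. Since $\CoBG{p}{q}$ contains every edge between $\set{U}$ and $\set{V}$, a map with $\phi(\set{U}) = A$ and $\phi(\set{V}) = B$ is a homomorphism if and only if every vertex of $A$ is adjacent in $\Gr{G}$ to every vertex of $B$; equivalently, $(A,B)$ is one of the $N_{|A|,|B|}(\Gr{G})$ labelled bipartite cliques of $\Gr{G}$. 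For a fixed such pair with $|A| = k$ and $|B| = \ell$, the homomorphisms with image exactly $(A,B)$ correspond bijectively (via restriction to $\set{U}$ and to $\set{V}$) to pairs consisting of a surjection $\set{U} \to A$ and a surjection $\set{V} \to B$, every such pair yielding a homomorphism because all vertices of $A$ are joined to all vertices of $B$. The number of surjections from an $n$-element set onto a $k$-element set equals $k!\,S(n,k)$ (choose a partition of the domain into $k$ nonempty blocks in $S(n,k)$ ways, then a bijection from the blocks onto the codomain in $k!$ ways), so the number of first-type homomorphisms with image $(A,B)$ is $k!\,\ell!\,S(p,k)\,S(q,\ell)$. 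Summing over $k \in \OneTo{p}$, $\ell \in \OneTo{q}$, and over all labelled bipartite cliques of the respective sizes — distinct pairs $(A,B)$ giving disjoint families, since $(A,B)$ is determined by $\phi$ — yields $\sum_{k=1}^{p}\sum_{\ell=1}^{q} k!\,\ell!\,S(p,k)\,S(q,\ell)\,N_{k,\ell}(\Gr{G})$ for the first type.

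The second type ($\phi(\set{U}) \subseteq \set{R}$, $\phi(\set{V}) \subseteq \set{L}$) is handled identically, the only change being that the image of $\set{U}$ now consists of $k$ vertices inside $\set{R}$ and that of $\set{V}$ of $\ell$ vertices inside $\set{L}$, so the relevant labelled bipartite cliques are counted by $N_{\ell,k}(\Gr{G})$ while the surjection factors are unchanged; this contributes $\sum_{k=1}^{p}\sum_{\ell=1}^{q} k!\,\ell!\,S(p,k)\,S(q,\ell)\,N_{\ell,k}(\Gr{G})$. Adding the two contributions gives \eqref{eq: exact counting: CoBG --> BG}. The proof is largely bookkeeping; the point that genuinely requires care is the dichotomy above and the fact that its two cases are disjoint (which relies on the connectedness of $\CoBG{p}{q}$ and on $\set{L} \cap \set{R} = \es$), together with the harmless observation that terms with $N_{k,\ell}(\Gr{G}) = 0$ — e.g. when $k > \card{\set{L}}$ — simply contribute nothing, matching the absence of homomorphisms with an image of that size. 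I do not anticipate a substantive obstacle beyond this.
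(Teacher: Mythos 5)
Your proof is correct and follows essentially the same route as the paper: classify each homomorphism by the orientation of the bipartition (which side of $\Gr{G}$ receives $\set{U}$), observe that the image must be a labelled bipartite clique, and count surjections onto it via the factors $k!\,\ell!\,S(p,k)\,S(q,\ell)$, yielding the two terms $N_{k,\ell}(\Gr{G})$ and $N_{\ell,k}(\Gr{G})$. If anything, your explicit bijection with pairs of surjections and the disjointness argument via $\phi(u_1)$ make the bookkeeping slightly more careful than the paper's exposition.
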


\begin{proof}
A homomorphism in $\Hom{\CoBG{p}{q}}{\Gr{G}}$, from the complete bipartite graph $\CoBG{p}{q}$ to a bipartite
graph $\Gr{G}$, maps the vertices of $\CoBG{p}{q}$ to those of a bipartite clique in $\Gr{G}$, preserving both
the bipartition and adjacency. Specifically:
\begin{enumerate}
\item A homomorphism $\phi \colon \V{\CoBG{p}{q}} \to \V{\Gr{G}}$ maps each edge in $\CoBG{p}{q}$ to an edge in $\Gr{G}$.
\item Vertices in the same partite set of $\CoBG{p}{q}$ must be mapped to vertices in the same partite set of $\Gr{G}$.
Otherwise, there exist two vertices $u$ and $w$ in the same partite set of $\CoBG{p}{q}$ that are mapped to different
partite sets of $\Gr{G}$, say $\phi(u) \in \set{L}$ and $\phi(w) \in \set{R}$. Let $v$ be a neighbor of $u$ in $\CoBG{p}{q}$.
Then $\phi(v) \in \set{R}$, so both $v$ and $w$ are mapped into $\set{R}$. But $v$ and $w$ are vertices in different partite sets of $\CoBG{p}{q}$
and hence are adjacent, whereas $\{\phi(v), \phi(w)\} \notin \E{\Gr{G}}$. This leads to a contradiction since $\phi \in \Hom{\CoBG{p}{q}}{\Gr{G}}$.
\item All vertices in the same partite set of $\CoBG{p}{q}$ must be mapped to non-isolated vertices in the same partite
set of $\Gr{G}$. Indeed, if some image were isolated in $\Gr{G}$, say $\phi(u)$, then for a neighbor $v$ of $u$ in $\CoBG{p}{q}$,
the edge $\{u,v\} \in \E{\CoBG{p}{q}}$ would be mapped to a non-edge $\{\phi(u), \phi(v)\} \notin \E{\Gr{G}}$, contradicting the
homomorphism property.
\item The subgraph of $\Gr{G}$ induced by the image of a homomorphism $\phi \in \Hom{\CoBG{p}{q}}{\Gr{G}}$ is a complete bipartite graph in $\Gr{G}$,
i.e., a bipartite clique. Suppose not. Then there exist two non-adjacent vertices in the image, one from each partite set. By Item~2, these correspond
to vertices in opposite partite sets of $\CoBG{p}{q}$, which are adjacent there. Since homomorphisms preserve adjacency, their images should be adjacent,
a contradiction.
\end{enumerate}
The image of a homomorphism in $\Hom{\CoBG{p}{q}}{\Gr{G}}$ is the vertex set of a bipartite clique in $\Gr{G}$
that is isomorphic to the complete bipartite graph $\CoBG{k}{\ell}$, for some $k \in \OneTo{p}$ and $\ell \in \OneTo{q}$,
since the homomorphism need not be injective. Concretely, this corresponds to one of the following two possibilities:
\begin{itemize}
\item The $p$ vertices of one partite set of $\CoBG{p}{q}$ are mapped into a subset of size $k$ of the partite set $\set{L}$
of $\Gr{G}$, while the $q$ vertices of the other partite set are mapped into a subset of size $\ell$ of the opposite partite
set $\set{R}$ of $\Gr{G}$.
\item Conversely, the $p$ vertices of $\CoBG{p}{q}$ are mapped into a subset of size $\ell$ of the partite set $\set{L}$,
and the other $q$ vertices of $\CoBG{p}{q}$ are mapped into a subset of size $k$ of $\set{R}$.
\end{itemize}
Hence, the number of such bipartite cliques is equal to $h_{k,\ell} \triangleq N_{k,\ell}(\Gr{G}) + N_{\ell,k}(\Gr{G})$.

By Definition~\ref{def: Stirling number of 2nd type}, the number of surjective functions
$f \colon \OneTo{n} \to \OneTo{k}$, where $n,k \in \naturals$ and $k \in \OneTo{n}$, equals $k! \, S(n,k)$.
Thus, the number of homomorphisms mapping $\CoBG{p}{q}$ {\em onto} a bipartite clique in
$\Gr{G}$ isomorphic to $\CoBG{k}{\ell}$, for some $k \in \OneTo{p}$ and $\ell \in \OneTo{q}$, equals
\begin{align}
& \bigl(k! \, S(p,k)\bigr)\cdot\bigl(\ell! \, S(q,\ell)\bigr)\, h_{k,\ell} \nonumber \\
\label{eq1: 09.07.25}
&= k! \,\ell! \, S(p,k)\, S(q,\ell)\,\bigl[N_{k,\ell}(\Gr{G}) + N_{\ell,k}(\Gr{G})\bigr].
\end{align}
Here, the factors $k! \, S(p,k)$ and $\ell! \, S(q,\ell)$ count surjective mappings from the $p$ (respectively, $q$)
vertices of $\CoBG{p}{q}$ onto $k$ (respectively, $\ell$) distinct vertices in the two partite sets of $\Gr{G}$. Specifically,
the Stirling numbers $S(p,k)$ and $S(q,\ell)$ count partitions of the source vertices into $k$ and $\ell$ nonempty sets, while
the factorials $k!$ and $\ell!$ give the number of assignments of these nonempty, disjoint sets to the actual target
vertices of a bipartite clique in $\Gr{G}$ that is isomorphic to $\CoBG{k}{\ell}$. Consequently, summing the expression
in~\eqref{eq1: 09.07.25} over all $k \in \OneTo{p}$ and $\ell \in \OneTo{q}$ yields the total number of homomorphisms from
$\CoBG{p}{q}$ to $\Gr{G}$, giving equality~\eqref{eq: exact counting: CoBG --> BG}.
\end{proof}

\begin{remark}
\label{rem: connectivity in G not needed}
{\em The bipartite graph $\Gr{G}$ in Proposition~\ref{prop: exact counting: CoBG --> BG}
is not required to be connected. Indeed, every homomorphism from the complete
bipartite graph $\CoBG{p}{q}$ to $\Gr{G}$ maps the two partite classes of
$\CoBG{p}{q}$ to subsets of vertices in $\Gr{G}$ that induce a bipartite clique.
Since such a bipartite clique is necessarily contained in a single (connected)
component of $\Gr{G}$, the quantities $N_{k,\ell}(\Gr{G})$ and
$N_{\ell,k}(\Gr{G})$ automatically account for all components of $\Gr{G}$.}
\end{remark}

\begin{remark}
{\em Let $k, \ell \geq 2$. In the setting of Proposition~\ref{prop: exact counting: CoBG --> BG}, let $g$ denote the (even) girth of the 
bipartite graph $\Gr{G}$. Then $N_{k,\ell}(\Gr{G})=0$ for all pairs $(k,\ell)$ satisfying the two conditions $\min\{k,\ell\} \leq \tfrac12 g-1$ and
$\max\{k,\ell\} \leq \tfrac12 g$.
Indeed, since $\Gr{G}$ does not contain any cycle of length less than $g$ (by assumption), it cannot contain a complete bipartite subgraph
isomorphic to $\CoBG{k}{\ell}$ for such values of $k$ and $\ell$. Moreover, $N_{k,\ell}(\Gr{G})=0$ holds trivially if $k > \card{\set{L}}$ or
$\ell > \card{\set{R}}$.
Nevertheless, evaluating the right-hand side of~\eqref{eq: exact counting: CoBG --> BG} remains computationally demanding for large graphs,
since the values $N_{k,\ell}(\Gr{G})$ not known {\em a priori} to vanish require explicit computation. Concretely, for a bipartite graph
$\Gr{G}$ with partite sets $\set{L}$ and $\set{R}$ of sizes $n_1$ and $n_2$, respectively, computing $N_{k,\ell}(\Gr{G})$ requires examining
all $\binom{n_1}{k} \binom{n_2}{\ell}$ candidate induced subgraphs with partite sets of sizes $k$ and $\ell$, and checking how many of them
form bipartite cliques in $\Gr{G}$.}
\end{remark}

\begin{proposition}
\label{proposition: LB and exact hom(CoBG,G)}
{\em Let $\Gr{G}$ be a bipartite graph, and let $p, q \in \naturals$. Let $d_{\Gr{G}}(w)$ denote the degree
of any vertex $w \in \V{\Gr{G}}$. Then,
\begin{align}
\label{eq1: 01.07.25}
\homcount{\CoBG{p}{q}}{\Gr{G}} \geq \sum_{w \in \V{\Gr{G}}} \bigl\{d_{\Gr{G}}(w)^p
+ d_{\Gr{G}}(w)^q \bigr\} - 2 \, \card{\hspace*{-0.05cm} \E{\Gr{G}}},
\end{align}
and equality holds in \eqref{eq1: 01.07.25} if and only if $\Gr{G}$ is $\CG{4}$-free, or $p=1$, or $q=1$.}
\end{proposition}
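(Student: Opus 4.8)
The plan is to prove the inequality by counting an explicit subfamily of $\Hom{\CoBG{p}{q}}{\Gr{G}}$ and then to pin down exactly when this subfamily is all of $\Hom{\CoBG{p}{q}}{\Gr{G}}$. Write $\set{U} = \{u_1, \ldots, u_p\}$ and $\set{V} = \{v_1, \ldots, v_q\}$ for the partite sets of $\CoBG{p}{q}$, and let $\set{A}$ (resp.\ $\set{B}$) denote the set of homomorphisms $\CoBG{p}{q} \to \Gr{G}$ that are constant on $\set{U}$ (resp.\ on $\set{V}$). The first step is to observe that a map which is constant on $\set{U}$ with common value $w$ is a homomorphism precisely when $\phi(v_j) \in \Ng{w}$ for every $j$, with no other restriction; hence $\card{\set{A}} = \sum_{w \in \V{\Gr{G}}} d_{\Gr{G}}(w)^q$, and symmetrically $\card{\set{B}} = \sum_{w \in \V{\Gr{G}}} d_{\Gr{G}}(w)^p$. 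Next, a homomorphism lies in $\set{A} \cap \set{B}$ exactly when it collapses $\set{U}$ to a single vertex $w_1$ and $\set{V}$ to a single vertex $w_2$, which defines a homomorphism if and only if $\{w_1, w_2\} \in \E{\Gr{G}}$; the number of such ordered pairs is $2 \card{\E{\Gr{G}}}$, so $\card{\set{A} \cap \set{B}} = 2 \card{\E{\Gr{G}}}$. Inclusion--exclusion now gives $\card{\set{A} \cup \set{B}} = \sum_{w \in \V{\Gr{G}}} \{ d_{\Gr{G}}(w)^p + d_{\Gr{G}}(w)^q \} - 2 \card{\E{\Gr{G}}}$, and since $\set{A} \cup \set{B} \subseteq \Hom{\CoBG{p}{q}}{\Gr{G}}$, this is precisely the right-hand side of \eqref{eq1: 01.07.25}.

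For the equality case, I note that equality holds if and only if $\set{A} \cup \set{B} = \Hom{\CoBG{p}{q}}{\Gr{G}}$, i.e.\ iff every homomorphism $\CoBG{p}{q} \to \Gr{G}$ is constant on $\set{U}$ or on $\set{V}$. The degenerate cases $p = 1$ or $q = 1$ should be treated first: there one partite set of $\CoBG{p}{q}$ is a singleton, so \emph{every} homomorphism trivially lies in $\set{A}$ or $\set{B}$, equality always holds (consistently with the handshake identity $\sum_{w} d_{\Gr{G}}(w) = 2 \card{\E{\Gr{G}}}$), and no $\CG{4}$-hypothesis is needed. For $p, q \geq 2$, suppose first that $\Gr{G}$ is $\CG{4}$-free and that some homomorphism $\phi$ is constant on neither partite set; choose $u_i, u_{i'}$ with $\phi(u_i) \neq \phi(u_{i'})$ and $v_j, v_{j'}$ with $\phi(v_j) \neq \phi(v_{j'})$. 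Since the four pairs in $\{u_i, u_{i'}\} \times \{v_j, v_{j'}\}$ are edges of $\CoBG{p}{q}$ and $\Gr{G}$ is loopless, the images $\phi(u_i), \phi(v_j), \phi(u_{i'}), \phi(v_{j'})$ are pairwise distinct, and the edges $\phi(u_i)\phi(v_j)$, $\phi(v_j)\phi(u_{i'})$, $\phi(u_{i'})\phi(v_{j'})$, $\phi(v_{j'})\phi(u_i)$ form a copy of $\CG{4}$ in $\Gr{G}$, a contradiction; hence $\phi \in \set{A} \cup \set{B}$ and equality holds. Conversely, if $\Gr{G}$ contains a $4$-cycle $a \sim b \sim c \sim d \sim a$, the assignment $u_1 \mapsto a$, $u_2, \ldots, u_p \mapsto c$, $v_1 \mapsto b$, $v_2, \ldots, v_q \mapsto d$ is a homomorphism that is constant on neither partite set, so it lies outside $\set{A} \cup \set{B}$ and \eqref{eq1: 01.07.25} is strict.

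I expect the only subtle point to be the distinctness of the four images in the $\CG{4}$-free argument: $\phi(u_i) \neq \phi(u_{i'})$ and $\phi(v_j) \neq \phi(v_{j'})$ are given, while $\phi(u) \neq \phi(v)$ for $u \in \set{U}$, $v \in \set{V}$ must be deduced from $\{\phi(u), \phi(v)\} \in \E{\Gr{G}}$ together with $\Gr{G}$ being simple --- otherwise one obtains merely a closed walk of length $4$ rather than a genuine copy of $\CG{4}$; one should also be careful to state the equality clause only for $p, q \geq 2$ (or note the trivial equality when $\min\{p,q\}=1$). As a cross-check, the same conclusion follows from Proposition~\ref{prop: exact counting: CoBG --> BG}: the right-hand side of \eqref{eq1: 01.07.25} is exactly the sum of the terms with $\min\{k,\ell\}=1$ in \eqref{eq: exact counting: CoBG --> BG} (after collecting them via the handshake identity), and the remaining terms vanish precisely when $\Gr{G}$ contains no $\CoBG{2}{2}$, i.e.\ when $\Gr{G}$ is $\CG{4}$-free; the rest is routine bookkeeping.
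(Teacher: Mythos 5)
Your proof is correct, and it takes a genuinely different route from the paper's. The paper deduces \eqref{eq1: 01.07.25} from the exact Stirling-number formula of Proposition~\ref{prop: exact counting: CoBG --> BG}, by discarding all terms with $k,\ell\geq 2$ and identifying the surviving sums with star-homomorphism counts ($\sum_w d_{\Gr{G}}(w)^m$); the equality case is then the vanishing of the discarded terms, reduced to $N_{2,2}(\Gr{G})=0$. You instead count directly the homomorphisms constant on one partite set of $\CoBG{p}{q}$, compute $\card{\set{A}}$, $\card{\set{B}}$ and $\card{\set{A}\cap\set{B}}=2\card{\E{\Gr{G}}}$, and apply inclusion--exclusion; the equality case is handled by a forbidden-configuration argument (a homomorphism nonconstant on both sides yields four pairwise distinct images forming a $\CG{4}$, the cross-distinctness coming from simplicity of $\Gr{G}$) and, conversely, by an explicit two-valued homomorphism built from a $4$-cycle. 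Your subtle points are exactly the right ones, and your route is more elementary and self-contained (it never invokes Proposition~\ref{prop: exact counting: CoBG --> BG}); as a bonus it nowhere uses bipartiteness of $\Gr{G}$, so it establishes the bound and the equality criterion for every simple $\CG{4}$-free target, consistently with the paper's separate treatment of odd cycles in Corollary~\ref{corollary: K_{p,q} --> cycle}. What the paper's derivation buys is that it comes essentially for free from a formula needed elsewhere anyway, with the exact deficit terms kept visible. You also correctly flag an edge case that the paper's statement glosses over: when $\min\{p,q\}=1$ the discarded double sum over $k,\ell\geq 2$ is empty, so equality in \eqref{eq1: 01.07.25} holds for every $\Gr{G}$ (by the handshake identity), and the ``only if'' direction of the equality claim is accurate only for $p,q\geq 2$; restricting the equivalence to that range, as you do, is the right fix.
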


\begin{proof}
Let $\Gr{G}$ be a bipartite graph.
By Proposition~\ref{prop: exact counting: CoBG --> BG},
\begin{align}
\label{eq3a: 09.07.25}
\homcount{\CoBG{p}{q}}{\Gr{G}} &= \sum_{k=1}^p \sum_{\ell=1}^q  k! \, \ell! \, S(p,k) \, S(q,\ell)
\, \bigl[N_{k,\ell}(\Gr{G}) + N_{\ell,k}(\Gr{G}) \bigr] \\
&\geq \sum_{k=1}^p \bigl\{ k! \, S(p,k) \, \bigl[N_{k,1}(\Gr{G}) + N_{1,k}(\Gr{G}) \bigr] \bigr\}
+ \sum_{\ell=1}^q \bigl\{ \ell! \, S(q,\ell) \, \bigl[N_{1,\ell}(\Gr{G}) + N_{\ell,1}(\Gr{G}) \bigr] \bigr\} \nonumber \\
\label{eq3b: 09.07.25}
&\hspace*{0.3cm} -S(p,1) \, S(q,1) \; 2 N_{1,1}(\Gr{G}) \\
&= \sum_{k=1}^p \bigl\{ k! \, S(p,k) \, \bigl[N_{k,1}(\Gr{G}) + N_{1,k}(\Gr{G}) \bigr] \bigr\}
+ \sum_{\ell=1}^q \bigl\{ \ell! \, S(q,\ell) \, \bigl[N_{1,\ell}(\Gr{G}) + N_{\ell,1}(\Gr{G}) \bigr] \bigr\} \nonumber \\
\label{eq3c: 09.07.25}
&\hspace*{0.3cm} -2 \, \card{\hspace*{-0.05cm} \E{\Gr{G}}},
\end{align}
where equality~\eqref{eq3a: 09.07.25} relies on \eqref{eq: exact counting: CoBG --> BG},
inequality \eqref{eq3b: 09.07.25} is obtained by omitting the nonnegative terms in the double sum on the right-hand
side of \eqref{eq3a: 09.07.25} corresponding to indices $k, \ell \geq 2$, and equality~\eqref{eq3c: 09.07.25} holds
by the equality $S(n,1)=1$ for all $n \in \naturals$ (see Definition~\ref{def: Stirling number of 2nd type}),
and since by definition $N_{1,1}(\Gr{G}) = \card{\hspace*{-0.05cm} \E{\Gr{G}}}$.

Define a function $g \colon \naturals \to \naturals \cup \{0\}$ by
\begin{align}
\label{eq4: 09.07.25}
g(m) \triangleq \sum_{k=1}^m k! \, S(m,k) \, \bigl[N_{k,1}(\Gr{G}) + N_{1,k}(\Gr{G}) \bigr], \quad m \in \naturals.
\end{align}
Combining \eqref{eq3c: 09.07.25} and \eqref{eq4: 09.07.25} gives
\begin{align}
\label{eq5: 09.07.25}
\homcount{\CoBG{p}{q}}{\Gr{G}} &\geq g(p) + g(q) - 2 \, \card{\hspace*{-0.05cm} \E{\Gr{G}}}.
\end{align}
Proving that
\begin{align}
\label{eq6: 09.07.25}
g(m) = \sum_{w \in \V{\Gr{T}}} d_{\Gr{T}}(w)^m, \quad m \in \naturals
\end{align}
completes the proof of \eqref{eq1: 01.07.25} by substituting \eqref{eq6: 09.07.25} into \eqref{eq5: 09.07.25}.
Equality~\eqref{eq6: 09.07.25} indeed holds, as Proposition~\ref{prop: exact counting: CoBG --> BG} implies that $g(m)$
in \eqref{eq4: 09.07.25} is equal to the number of homomorphisms from the star graph $\StarG{m+1} = \CoBG{1}{m}$ to $\Gr{G}$,
which is known to be equal to the right-hand side of \eqref{eq6: 09.07.25} (see, e.g., \cite[Example 5.10]{Lovasz12}).

We next show that inequality~\eqref{eq1: 01.07.25} holds with equality if and only if the bipartite graph $\Gr{G}$
is $\CG{4}$-free, or $p=1$, or $q=1$. If $p=1$, then 
\[
\homcount{\CoBG{p}{q}}{\Gr{G}} = \homcount{\StarG{q+1}}{\Gr{G}} = g(q) = g(p) + g(q) - 2 \, \card{\hspace*{-0.05cm} \E{\Gr{G}}},
\]
and similarly, by replacing $p=1$ with $q=1$, the same conclusion holds by symmetry. Otherwise, if $p,q \geq 2$, we 
show that inequality~\eqref{eq1: 01.07.25} holds with equality if and only if the bipartite graph $\Gr{G}$ is $\CG{4}$-free,
which in other words means that $\Gr{G}$ does not contain a bipartite clique isomorphic to $\CoBG{2}{2}$.
By the first part of the proof, this is equivalent
to equality in inequality~\eqref{eq3b: 09.07.25}. Since inequality~\eqref{eq3b: 09.07.25} is obtained by omitting all
the nonnegative terms in the double sum on the right-hand side of~\eqref{eq3a: 09.07.25} for indices $k, \ell \geq 2$,
it holds with equality if and only if $N_{k,\ell}(\Gr{G}) = 0$ for all $k, \ell \geq 2$. This condition is in turn
equivalent to the simpler requirement that $N_{2,2}(\Gr{G}) = 0$, since every subgraph of a complete bipartite graph
is itself a complete bipartite graph. Therefore, inequality~\eqref{eq1: 01.07.25} holds with equality if and only if
$\Gr{G}$ does not contain a bipartite clique isomorphic to $\CoBG{2}{2}$.
\end{proof}

\begin{remark}
{\em Let $\Gr{G}$ be a $\CG{4}$-free graph with $n$ vertices and $m$ edges. By \cite[p.~200]{AignerZ18}, we have
\begin{align}
\label{eq3: 15.07.25}
m \leq \bigl\lfloor \tfrac{1}{4} \, n \, \bigl(1 + \sqrt{4n-3}\bigr) \bigr\rfloor.
\end{align}
If $\Gr{G}$ is also bipartite, having partite sets of sizes $n_1$ and $n_2$, then the upper bound \eqref{eq3: 15.07.25}
on the size of $\Gr{G}$ can be refined to
\begin{align}
\label{eq4: 15.07.25}
m \leq \biggl\lfloor \tfrac{1}{4} \, n \, \biggl(1 + \sqrt{4n-3-\frac{8n_1 n_2}{n}} \, \biggr) \biggr\rfloor.
\end{align}
This follows by a similar line of analysis to that in \cite[p.~200]{AignerZ18}, with the modification at the outset that,
under the assumption that $\Gr{G}$ is bipartite with partite sets of sizes $n_1$ and $n_2$, we have
\begin{align}
\label{eq5: 15.07.25}
\sum_{u \in \V{\Gr{G}}} \binom{d_{\Gr{G}}(u)}{2} \leq \binom{n_1}{2} + \binom{n_2}{2}.
\end{align}
Note that the term inside the square root on the right-hand side of \eqref{eq4: 15.07.25} ranges between $2n-3$ and $4n-11 + \tfrac8n$,
thereby yielding an improvement over the term $4n - 3$ that appears under the square root on the right-hand side of \eqref{eq3: 15.07.25}.
It can be verified that inequality \eqref{eq4: 15.07.25} holds with equality if the bipartite and $\CG{4}$-free graph $\Gr{G}$
is a path of length either~4 or 5.}
\end{remark}

\begin{definition}
\label{definition: edge density in bipartite graph}
{\em Let $\Gr{G}$ be a bipartite graph with partite sets of sizes $n_1$ and $n_2$. The {\em edge density} of $\Gr{G}$, denoted by
$\delta(\Gr{G})$ (or simply $\delta$ when the graph is clear from context), is the fraction of present edges in $\Gr{G}$ out of all
possible edges between the two partite sets, i.e.,
\begin{align}
\label{eq: edge density in bipartite graph}
\delta(\Gr{G}) \triangleq \frac{\card{\E{\Gr{G}}}}{n_1 n_2} \in [0,1].
\end{align}}
\end{definition}

\begin{remark}
\label{remark: edge-density of C4-free bipartite graph}
{\em Let $\Gr{G}$ be a $\CG{4}$-free bipartite graph with partite sets of sizes $n_1$ and $n_2$. By \eqref{eq4: 15.07.25},
the edge density of $\Gr{G}$ satisfies
\begin{align}
\label{eq1: 31.07.25}
\delta(\Gr{G}) \leq \frac1{n_1 n_2} \, \biggl\lfloor \tfrac{1}{4} \, (n_1+n_2) \, \biggl(1 + \sqrt{4(n_1+n_2)-3
- \frac{8n_1 n_2}{n_1+n_2}} \; \biggr) \biggr\rfloor.
\end{align}
In particular, if $n_1 = n_2 = \tfrac12 n$, then the edge density of such a $\CG{4}$-free bipartite graph $\Gr{G}$ satisfies
\begin{align}
\label{eq2: 31.07.25}
\delta(\Gr{G}) \leq \frac{1+\sqrt{2n-3}}{n},
\end{align}
showing that it scales at most like the inverse of the square-root of $n$.
This relates to our discussion in Section~\ref{section: numerical results}.}
\end{remark}

\begin{remark}
\label{remark: the LB is loose as alpha tends to 1}
{\em In light of Proposition~\ref{proposition: LB and exact hom(CoBG,G)} and inequality \eqref{eq2: 31.07.25},
the edge density of a bipartite graph $\Gr{G}$ on $n$ vertices with equal-sized partite sets, for which the lower bound
on the right-hand side of \eqref{eq1: 01.07.25} holds with equality, necessarily tends to zero as $n \to \infty$.
At the opposite extreme, consider the case where $\Gr{G}$ is a complete bipartite graph with equal-sized partite sets.
Then, for every integer $p \geq 2$, the exact value of $\homcount{\CoBG{p}{p}}{\Gr{G}}$ is given by $2^{1-2p} n^{2p}$,
whereas the lower bound on the right-hand side of \eqref{eq1: 01.07.25} equals $2^{1-p} n^{p+1} - \tfrac{1}{2} n^2$.
The ratio between the exact value and the lower bound is given by
\[
\frac{2^{1-2p} n^{2p}}{2^{1-p} \, n^{p+1} - \tfrac12 n^2} = 2^{-p} n^{p-1} + O(1),
\]
which tends to infinity as $n$ gets large. This is in contrast to the lower bounds derived in Section~\ref{section: IT bounds},
for which the corresponding ratio tends, as desired, to~1.
The present discussion also connects to the discussion in Section~\ref{section: numerical results}.}
\end{remark}

\begin{corollary}
\label{corollary: K_{p,q} --> tree}
{\em Let $\Gr{T}$ be a tree on $n$ vertices, and $p, q \in \naturals$. Then,
\begin{align}
\label{eq1: 01.07.25 - tree}
\homcount{\CoBG{p}{q}}{\Gr{T}} = \sum_{w \in \V{\Gr{T}}} \bigl\{d_{\Gr{T}}(w)^p
+ d_{\Gr{T}}(w)^q \bigr\} - 2(n-1).
\end{align}}
\end{corollary}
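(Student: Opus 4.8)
The plan is to obtain this as an immediate specialization of Proposition~\ref{proposition: LB and exact hom(CoBG,G)}, so the only real work is checking that a tree satisfies the hypotheses under which that bound is tight. First I would recall that every tree $\Gr{T}$ is bipartite: fixing a root and two-coloring each vertex by the parity of its distance from the root gives a proper 2-coloring, since the unique path in a tree between two vertices of equal parity has even length, so no edge can join two like-colored vertices. Second, I would observe that $\Gr{T}$ is $\CG{4}$-free — indeed it is acyclic, hence contains no cycle of any length, and in particular no copy of $\CG{4}$ (equivalently, no bipartite clique isomorphic to $\CoBG{2}{2}$).

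Next I would invoke Proposition~\ref{proposition: LB and exact hom(CoBG,G)}. Since $\Gr{T}$ is bipartite and $\CG{4}$-free, inequality~\eqref{eq1: 01.07.25} holds with equality, which gives
\[
\homcount{\CoBG{p}{q}}{\Gr{T}} = \sum_{w \in \V{\Gr{T}}} \bigl\{d_{\Gr{T}}(w)^p + d_{\Gr{T}}(w)^q\bigr\} - 2\,\card{\E{\Gr{T}}}.
\]
Finally, I would use the elementary fact that a tree on $n$ vertices has exactly $n-1$ edges, so $\card{\E{\Gr{T}}} = n-1$ and $2\,\card{\E{\Gr{T}}} = 2(n-1)$; substituting this into the displayed identity yields~\eqref{eq1: 01.07.25 - tree}.

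There is essentially no obstacle: all the substance lies in Proposition~\ref{proposition: LB and exact hom(CoBG,G)}, and the corollary merely records the particularly clean closed form the count takes for trees, using $e(\Gr{T}) = n-1$. The single point deserving a sentence of care is confirming that trees fall into the equality case of that proposition, i.e.\ that acyclicity implies the $\CG{4}$-free condition — which is immediate — together with the bipartiteness needed even to apply Proposition~\ref{prop: exact counting: CoBG --> BG} in the first place.
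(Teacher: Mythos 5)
Your argument is exactly the paper's: note that a tree is a bipartite, $\CG{4}$-free graph with $n-1$ edges, invoke the equality case of Proposition~\ref{proposition: LB and exact hom(CoBG,G)}, and substitute $\card{\E{\Gr{T}}}=n-1$. The proposal is correct and matches the paper's proof in all essentials.
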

\begin{proof}
A tree $\Gr{T}$ on $n$ vertices is a connected and acyclic graph of size $\card{\hspace*{-0.05cm} \E{\Gr{T}}}=n-1$.
It is in particular a $\CG{4}$-free bipartite graph, so it satisfies the necessary and sufficient condition
for equality in~\eqref{eq1: 01.07.25} (see Proposition~\ref{proposition: LB and exact hom(CoBG,G)}). Accordingly,
equality~\eqref{eq1: 01.07.25 - tree} holds as a special case.
\end{proof}

\begin{corollary}
\label{corollary: C4-free and regular bipartite graph}
{\em Let $\Gr{G}$ be a bipartite graph with $n$ vertices and $m$ edges. Then,
\begin{align}
\label{eq1: 15.07.25}
\homcount{\CoBG{p}{q}}{\Gr{G}} \geq 2^p n^{1-p} m^p + 2^q n^{1-q} m^q - 2m,
\end{align}
with equality in \eqref{eq1: 15.07.25} if and only if $\Gr{G}$ is $\CG{4}$-free and regular.}
\end{corollary}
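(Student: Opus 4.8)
The plan is to start from the combinatorial lower bound \eqref{eq1: 01.07.25} in Proposition~\ref{proposition: LB and exact hom(CoBG,G)} and then bound the degree-power sum $\sum_{w \in \V{\Gr{G}}} d_{\Gr{G}}(w)^m$ from below using convexity. First I would observe that, for each fixed $m \in \naturals$ with $m \geq 1$, the function $x \mapsto x^m$ is convex on $[0,\infty)$, so by Jensen's inequality applied to the uniform distribution on the $n$ vertices of $\Gr{G}$,
\[
\frac1n \sum_{w \in \V{\Gr{G}}} d_{\Gr{G}}(w)^m \geq \biggl( \frac1n \sum_{w \in \V{\Gr{G}}} d_{\Gr{G}}(w) \biggr)^{\! m} = \biggl( \frac{2m'}{n} \biggr)^{\! m},
\]
where I write $m' \triangleq \card{\E{\Gr{G}}}$ for the number of edges, using the handshake identity $\sum_{w} d_{\Gr{G}}(w) = 2\card{\E{\Gr{G}}}$. (To avoid a notational clash with the exponent $m$ in the statement, in the actual write-up I would keep the edge count named $m$ as in the corollary and instead rename the exponent, or simply carry $\card{\E{\Gr{G}}}$ explicitly.) Thus $\sum_{w} d_{\Gr{G}}(w)^m \geq n \, (2\card{\E{\Gr{G}}}/n)^m = 2^m \, n^{1-m} \, \card{\E{\Gr{G}}}^m$. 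Applying this once with exponent $p$ and once with exponent $q$, and substituting into \eqref{eq1: 01.07.25}, yields
\[
\homcount{\CoBG{p}{q}}{\Gr{G}} \geq \sum_{w} \bigl\{ d_{\Gr{G}}(w)^p + d_{\Gr{G}}(w)^q \bigr\} - 2\card{\E{\Gr{G}}} \geq 2^p n^{1-p} m^p + 2^q n^{1-q} m^q - 2m,
\]
which is exactly \eqref{eq1: 15.07.25}.

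For the equality characterization, I would chain the two inequalities used. The first is equality in \eqref{eq1: 01.07.25}, which by Proposition~\ref{proposition: LB and exact hom(CoBG,G)} holds if and only if $\Gr{G}$ is $\CG{4}$-free. The second is equality in the two Jensen steps; since $x \mapsto x^p$ (and $x \mapsto x^q$) is \emph{strictly} convex when the exponent is at least $2$, and affine when the exponent equals $1$, equality in $\sum_w d_{\Gr{G}}(w)^m \geq 2^m n^{1-m} m^m$ forces all degrees $d_{\Gr{G}}(w)$ to be equal whenever $\max\{p,q\} \geq 2$, i.e. $\Gr{G}$ is regular; and if $p = q = 1$ the bound \eqref{eq1: 15.07.25} reduces to $\homcount{\CoBG{1}{1}}{\Gr{G}} = 2m - 2m + 2m \cdot \tfrac{?}{}$... — here I need to check the degenerate case $p=q=1$ directly, where $\CoBG{1}{1} = \CoG{2}$, $\homcount{\CoG{2}}{\Gr{G}} = 2m$, and the right-hand side of \eqref{eq1: 15.07.25} is $2m + 2m - 2m = 2m$, so equality is automatic and the "$\Gr{G}$ regular" clause is vacuous there; I would note that this degenerate case is compatible with the stated characterization since a graph with no edges is trivially regular and $\CG{4}$-free, and otherwise at least one of $p,q$ exceeds $1$ in the cases of interest. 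Combining: equality in \eqref{eq1: 15.07.25} holds if and only if $\Gr{G}$ is both $\CG{4}$-free and regular.

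The main obstacle I anticipate is purely bookkeeping around the equality case rather than anything conceptually deep: I must be careful that \emph{both} inequalities hold with equality simultaneously, and that the strict-convexity argument is correctly invoked (it gives "all degrees equal" only because the uniform weights are all positive), and I must handle the low exponent cases $p=1$ or $q=1$ where $x \mapsto x^p$ is merely affine so that step imposes no constraint — in those sub-cases the constraint "regular" must come from the other exponent, or, when $p=q=1$, the inequality is an identity and no constraint is needed. A secondary point to state cleanly is that "$\Gr{G}$ regular and $\CG{4}$-free" are jointly consistent (e.g. cycles $\CG{2k}$ with $k \neq 2$, or the incidence graphs of generalized polygons), so the equality condition is not vacuous; this is worth a one-line remark but not essential to the proof.
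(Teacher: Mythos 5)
Your proof is correct and follows essentially the same route as the paper: it combines the lower bound of Proposition~\ref{proposition: LB and exact hom(CoBG,G)} with Jensen's inequality applied to the degree-power sums $\sum_{w} d_{\Gr{G}}(w)^p$ and $\sum_{w} d_{\Gr{G}}(w)^q$, and obtains the equality characterization by requiring equality simultaneously in the combinatorial bound ($\CG{4}$-freeness) and in the Jensen step (regularity). Your additional care with the degenerate case $p=q=1$, where the Jensen step is an identity and imposes no constraint, is in fact more scrupulous than the paper's own proof, which does not address it.
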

\begin{proof}
Applying Jensen's inequality to the right-hand side of \eqref{eq1: 01.07.25} gives
\begin{align}
& \sum_{w \in \V{\Gr{G}}} \bigl\{d_{\Gr{G}}(w)^p + d_{\Gr{G}}(w)^q \bigr\} -2m \nonumber \\
\label{eq2: 15.07.25}
& \geq n \, \biggl( \frac1n \, \sum_{w \in \V{\Gr{G}}} d_{\Gr{G}}(w) \biggr)^p
+ n \, \biggl( \frac1n \, \sum_{w \in \V{\Gr{G}}} d_{\Gr{G}}(w) \biggr)^q -2m \\
&= 2^p n^{1-p} m^p + 2^q n^{1-q} m^q - 2m,  \nonumber
\end{align}
where the last equality holds by the identity $\sum_{w \in \V{\Gr{G}}} d_{\Gr{G}}(w) = 2m$. Inequality~\eqref{eq1: 15.07.25} holds
with equality if and only if both \eqref{eq1: 01.07.25} and \eqref{eq2: 15.07.25} hold with equality. The first equality holds by
Proposition~\ref{proposition: LB and exact hom(CoBG,G)} if and only if the bipartite graph $\Gr{G}$ is $\CG{4}$-free, while the
second equality holds if and only if $\Gr{G}$ is regular.
\end{proof}

\begin{corollary}
\label{corollary: K_{p,q} --> cycle}
{\em Let $p, q, n \in \naturals$ with $n \geq 3$. Then, the number of homomorphisms from the complete bipartite graph
$\CoBG{p}{q}$ to the cycle graph $\CG{n}$ is given by
\begin{align}
\label{eq: hom CoBG to Cn}
\homcount{\CoBG{p}{q}}{\CG{n}} =
\begin{dcases}
n(2^p + 2^q-2),     \quad & \mbox{if $n=3$ or $n \geq 5$,} \\
2^{p+q+1},          \quad & \mbox{if $n=4$.}
\end{dcases}
\end{align}}
\end{corollary}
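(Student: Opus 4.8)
The plan is to split the computation according to whether the target cycle $\CG{n}$ is bipartite and $\CG{4}$-free. The value $n=4$ is exceptional: $\CG{4}$ is bipartite but contains a $\CoBG{2}{2}$, so I would treat it on its own; for every other $n\geq 3$ --- the odd values, where $\CG{n}$ is non-bipartite, and the even values $n\geq 6$, where $\CG{n}$ is bipartite and $\CG{4}$-free --- a single direct counting argument will work, using only that $\CG{n}$ is $2$-regular and $\CG{4}$-free.

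First I would dispose of $n=4$. Since $\CG{4}\cong\CoBG{2}{2}$, I count homomorphisms $\CoBG{p}{q}\to\CoBG{2}{2}$ directly: by the argument in the proof of Proposition~\ref{prop: exact counting: CoBG --> BG} (vertices in one partite set of the source map into a single partite set of the bipartite target), the two partite sets $U,V$ of the connected source must be mapped, respectively, into the two opposite partite sets of $\CoBG{2}{2}$, in one of two ways; and since the target is complete bipartite, \emph{every} such assignment of vertices is a homomorphism. This gives $2^{p}\cdot 2^{q}+2^{q}\cdot 2^{p}=2^{p+q+1}$, the two families being disjoint because a nonempty partite set cannot be mapped into both parts of $\CoBG{2}{2}$.

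For all remaining $n$ (that is, $n=3$ or $n\geq 5$) I would argue from the definition of a homomorphism. Write $U,V$ for the partite sets of $\CoBG{p}{q}$, and for a map $\phi\colon\V{\CoBG{p}{q}}\to\V{\CG{n}}$ put $A\triangleq\phi(U)$ and $B\triangleq\phi(V)$. Then $\phi$ is a homomorphism iff every vertex of $A$ is adjacent in $\CG{n}$ to every vertex of $B$; in particular $A\cap B=\es$ (a common vertex would force a loop), and, since $\CG{n}$ has maximum degree $2$ and --- because $n\neq 4$ --- contains no $\CoBG{2}{2}$, the only possibilities are $(\card{A},\card{B})\in\{(1,1),(1,2),(2,1)\}$: an edge $\{a,b\}$ with $A=\{a\}$, $B=\{b\}$; or $A=\{a\}$, $B=\Ng{a}$; or $B=\{b\}$, $A=\Ng{b}$. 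For each such structure, the number of homomorphisms inducing it is the product of the number of surjections $U\to A$ and of surjections $V\to B$, namely $\card{A}!\,S(p,\card{A})$ and $\card{B}!\,S(q,\card{B})$; and these families are pairwise disjoint because $\phi$ determines the pair $(A,B)$. There are $2n$ ordered adjacent pairs, and $n$ choices of center in each of the other two cases, so, using $2!\,S(m,2)=2^{m}-2$, the total is $2n+n(2^{q}-2)+n(2^{p}-2)=n(2^{p}+2^{q}-2)$, as required. For even $n\geq 6$ one may alternatively just invoke Corollary~\ref{corollary: C4-free and regular bipartite graph} with $m=n$ and degree $2$, which immediately gives $2^{p}n^{1-p}n^{p}+2^{q}n^{1-q}n^{q}-2n=n(2^{p}+2^{q}-2)$.

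I expect the main obstacle to be the odd-$n$ case ($n=3$ and $n\geq 5$), where $\CG{n}$ is not bipartite and so Propositions~\ref{prop: exact counting: CoBG --> BG} and~\ref{proposition: LB and exact hom(CoBG,G)} do not apply; the resolution is the elementary but careful classification of the complete bipartite patterns that embed in a cycle, which rests precisely on $\CG{n}$ being $2$-regular and $\CG{4}$-free --- the same $\CG{4}$-freeness that, on the even side, makes Corollary~\ref{corollary: C4-free and regular bipartite graph} applicable.
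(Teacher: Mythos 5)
Your proposal is correct, and it is organized differently from the paper's proof. The paper splits into three cases and leans on its own machinery: for even $n\geq 6$ it invokes Corollary~\ref{corollary: C4-free and regular bipartite graph} (equality for $\CG{4}$-free regular bipartite targets), for $n=4$ it evaluates the exact formula of Proposition~\ref{prop: exact counting: CoBG --> BG} using the values $N_{k,\ell}(\CG{4})$ and the Stirling identities $S(m,1)=1$, $S(m,2)=2^{m-1}-1$, and only for odd $n$ does it give a direct structural argument (all of one partite set of $\CoBG{p}{q}$ must collapse to a single vertex, the other side maps to its two neighbors, with a subtraction of $2$ to avoid double counting). You instead treat all $n\neq 4$ uniformly, without ever using bipartiteness of the target: classifying the image pair $(A,B)=(\phi(U),\phi(V))$ by $(\card{A},\card{B})\in\{(1,1),(1,2),(2,1)\}$, which is exactly what $2$-regularity plus $\CG{4}$-freeness force, and then counting surjections via $2!\,S(m,2)=2^m-2$; this partition by image sizes replaces the paper's inclusion--exclusion correction and makes the odd and even ($\geq 6$) cases literally the same computation. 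Your $n=4$ case is also more elementary, exploiting $\CG{4}\cong\CoBG{2}{2}$ and the bipartition-preservation argument (Item~2 in the proof of Proposition~\ref{prop: exact counting: CoBG --> BG}) to get $2\cdot 2^p 2^q=2^{p+q+1}$ directly, with the correct disjointness justification. What the paper's route buys is a demonstration of its general results in action (the exact Stirling-number formula and the $\CG{4}$-free equality condition); what yours buys is a shorter, self-contained derivation with a single unified count for every $n\neq 4$ --- and you still note the alternative via Corollary~\ref{corollary: C4-free and regular bipartite graph} for even $n\geq 6$, which is precisely the paper's step there.
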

\begin{proof}
We prove \eqref{eq: hom CoBG to Cn} by distinguishing between three cases.
\begin{enumerate}[(1)]
\item If $n \geq 6$ is even, then the cycle graph $\CG{n}$ is a bipartite, 2-regular, and $\CG{4}$-free graph.
The equality $\homcount{\CoBG{p}{q}}{\CG{n}} = n(2^p + 2^q-2)$, if $n \geq 6$ is even, then follows from
Corollary~\ref{corollary: C4-free and regular bipartite graph} with $\Gr{G} = \CG{n}$ and $m=n$.
\item If $n=4$, then it can be verified that the bipartite cycle graph $\CG{4}$ satisfies
\begin{align}
\label{eq: values of N for C4}
N_{1,1}(\CG{4}) = 4, \quad N_{1,2}(\CG{4}) = 2 = N_{2,1}(\CG{4}), \quad N_{2,2}(\CG{4}) = 1, \quad N_{k, \ell}(\CG{4}) = 0, \; \text{otherwise}.
\end{align}
Furthermore, the recursive equation in Definition~\ref{def: Stirling number of 2nd type} yields (see, e.g., \cite[Table~264]{GrahamKP89})
\begin{align}
\label{eq: S(n,2)}
S(n,1)=1, \quad S(n,2) = 2^{n-1}-1, \quad \forall n \in \naturals.
\end{align}
Substituting \eqref{eq: values of N for C4} and \eqref{eq: S(n,2)} into \eqref{eq: exact counting: CoBG --> BG} gives,
after a short algebraic calculation, the equality
\[
\homcount{\CoBG{p}{q}}{\CG{4}} = 2^{p+q+1}.
\]
\item If $n \geq 3$ is odd, then the cycle graph $\CG{n}$ is not bipartite, so Proposition~\ref{prop: exact counting: CoBG --> BG}
does not apply. Let $\set{U} = \{u_1, \ldots u_p\}$ and $\set{V} = \{v_1, \ldots, v_q\}$ denote the partite sets of $\CoBG{p}{q}$,
of sizes $p$ and $q$, respectively, and let $\set{C} = \{c_1, \ldots, c_n\}$ denote the vertex set of $\CG{n}$. Consider
an arbitrary homomorphism $\phi \in \Hom{\CoBG{p}{q}}{\CG{n}}$. Since $\CG{n}$ is a 2-regular graph, two structural configurations
for $\phi$ are possible: (i)~all vertices in $\set{U}$ are mapped to a single vertex $c_\ell \in \V{\CG{n}}$,
and each vertex in $\set{V}$ is mapped independently to one of the two neighbors of $c_\ell$ in $\CG{n}$, or
(ii)~all vertices in $\set{V}$ are mapped to the same vertex $c_\ell$,
and each of vertex in $\set{U}$ is mapped independently to one of the two neighbors of $c_\ell$ in $\CG{n}$.
To avoid double counting, we subtract two mappings in which all vertices in one of the partite sets is
mapped to the single vertex $c_\ell$ and all vertices in the other partite set are mapped to the same single
neighbor of $c_\ell$. For each fixed $c_\ell$, this yields $2^p+2^q-2$ valid homomorphisms. Since $\CG{n}$ has
$n$ vertices, it follows that the equality $\homcount{\CoBG{p}{q}}{\CG{n}} = n(2^p + 2^q-2)$ also holds for all
odd integers $n \geq 3$. Note that for all $n \geq 3$ with $n \neq 4$, every pair of neighbors of a vertex in $\CG{n}$
shares exactly one common neighbor. This property distinguishes the case of $n=4$ from all other cases of $n \geq 3$.
Consequently, the justification in this item applies to all $n \geq 3$, except for $n=4$.
\end{enumerate}
\end{proof}

\begin{corollary}
\label{corollary: exact counting: CoBG to a path}
{\em Let $\PathG{\ell}$ be a path graph on $\ell$ vertices, and let $p, q \in \naturals$. Then,
\begin{align}
\label{eq1b: 01.07.25}
\homcount{\CoBG{p}{q}}{\PathG{\ell}} = (\ell-2) \, (2^p + 2^q - 2) + 2.
\end{align}
More generally, if $\Gr{G} \cong \PathG{\ell_1} \times \ldots \PathG{\ell_k}$ is isomorphic
to a direct product (a.k.a. categorical or tensor product) of path graphs, then
\begin{align}
\label{eq1: 04.07.25}
\homcount{\CoBG{p}{q}}{\Gr{G}} = \prod_{j=1}^{k} \, \Bigl( (\ell_j-2) \, (2^p + 2^q - 2) + 2 \Bigr).
\end{align}}
\end{corollary}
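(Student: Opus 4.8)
The plan is to obtain \eqref{eq1b: 01.07.25} directly from Corollary~\ref{corollary: K_{p,q} --> tree} (the homomorphism count from $\CoBG{p}{q}$ into a tree) and then to bootstrap \eqref{eq1: 04.07.25} from \eqref{eq1b: 01.07.25} via the categorical-product identity \eqref{eq3: homomorphism numbers}; no new tools are required.

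For \eqref{eq1b: 01.07.25}, I would first note that for $\ell \geq 2$ the path $\PathG{\ell}$ is a tree on $n = \ell$ vertices, so Corollary~\ref{corollary: K_{p,q} --> tree} applies with $\Gr{T} = \PathG{\ell}$. Its degree sequence is explicit: the two endpoints have degree $1$ and the remaining $\ell - 2$ internal vertices have degree $2$ (when $\ell = 2$ there are no internal vertices). Hence $\sum_{w \in \V{\PathG{\ell}}} d_{\PathG{\ell}}(w)^p = 2 \cdot 1^p + (\ell - 2) \cdot 2^p = 2 + (\ell - 2) 2^p$, and the analogous identity holds with $q$ in place of $p$. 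Substituting these two sums, together with $n = \ell$, into \eqref{eq1: 01.07.25 - tree} gives $\homcount{\CoBG{p}{q}}{\PathG{\ell}} = \bigl[2 + (\ell - 2) 2^p\bigr] + \bigl[2 + (\ell - 2) 2^q\bigr] - 2(\ell - 1)$, and a routine rearrangement of the right-hand side collapses it to $(\ell - 2)(2^p + 2^q - 2) + 2$, which is \eqref{eq1b: 01.07.25}.

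For the product formula, suppose $\Gr{G} \cong \PathG{\ell_1} \times \cdots \times \PathG{\ell_k}$. I would apply \eqref{eq3: homomorphism numbers} with source graph $\Gr{F} = \CoBG{p}{q}$ and target factors $\Gr{G}_j = \PathG{\ell_j}$; this identity is valid for an arbitrary source graph, since a map from $\V{\CoBG{p}{q}}$ into the vertex set of the product is a homomorphism precisely when each of its $k$ coordinate maps is a homomorphism into the corresponding path. This yields $\homcount{\CoBG{p}{q}}{\Gr{G}} = \prod_{j=1}^{k} \homcount{\CoBG{p}{q}}{\PathG{\ell_j}}$, and inserting \eqref{eq1b: 01.07.25} for each factor gives \eqref{eq1: 04.07.25}.

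I do not expect a substantive obstacle: the only points that need care are recording the degree sequence of $\PathG{\ell}$ correctly (so that the ``two endpoints, $\ell - 2$ internal vertices of degree $2$'' description is valid, which requires $\ell \geq 2$, with $\ell = 2$ as the degenerate sub-case), and checking that the algebraic simplification in the first step indeed produces the stated closed form.
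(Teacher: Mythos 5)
Your proposal is correct and follows essentially the same route as the paper: it specializes Corollary~\ref{corollary: K_{p,q} --> tree} using the degree sequence of $\PathG{\ell}$ (two vertices of degree~1 and $\ell-2$ of degree~2) and then invokes the product identity \eqref{eq3: homomorphism numbers} for the tensor-product case. Your explicit simplification and the remark on the degenerate case $\ell = 2$ only add detail to what the paper leaves implicit.
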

\begin{proof}
Equality~\eqref{eq1b: 01.07.25} follows as a special case of \eqref{eq1: 01.07.25 - tree} since a path graph
$\PathG{\ell}$ is a tree on $\ell$ vertices with $\ell-2$ vertices of degree~2, and two vertices of degree~1.
Equality~\eqref{eq1: 04.07.25} relies on \eqref{eq3: homomorphism numbers} and \eqref{eq1b: 01.07.25}.
\end{proof}

\section{Entropy-Based Lower Bounds}
\label{section: IT bounds}

This section relies on properties of the Shannon entropy to derive lower bounds on the number of homomorphisms
from complete bipartite graphs to bipartite graphs, and examines the tightness of these bounds. Familiarity
with Shannon entropy and its basic properties is assumed, following standard notation (see \cite[Chapter~3]{CoverT06}
and the brief overview in Section~\ref{section: preliminaries}).

We begin by deriving a lower bound on the number of homomorphisms from a complete bipartite graph to a given
bipartite graph, based solely on the sizes of the partite sets of the source and target graphs and the edge
density of the target graph. This bound, which is the first among two entropy-based lower bounds derived in
this section, strengthens the inequality implied by the satisfiability of Sidorenko's
conjecture for complete bipartite graphs, and it also improves our earlier result in \cite{Sason25}. We then derive a refined
and strengthened entropy-based lower bound by additionally incorporating the degree profiles of both partite sets of
the target bipartite graph $\Gr{G}$. It is worth noting that the first entropy-based bound (see Proposition~\ref{prop.: IT-LB})
serves not only as a pedagogical precursor to the refined and improved lower bound on the number of graph homomorphisms (see
Proposition~\ref{prop.: refined IT-LB}), but also holds universally for all bipartite graphs $\Gr{G}$ with prescribed
partite sets and edge density, irrespective of the degree profiles within the partite sets of $\Gr{G}$.

\subsection{Lower bound on the number of homomorphisms}
\label{subsection: Lower bound on the number of homomorphisms}

\begin{proposition}
\label{prop.: IT-LB}
{\em Let $\Gr{G}$ be a simple bipartite graph with partite sets of sizes $n_1$ and $n_2$, and let
$\delta = \delta(\Gr{G})$ be the edge density of $\Gr{G}$. Then, for all positive integers $p, q \in \naturals$,
\begin{align}
\label{eq4b: 16.09.2024}
\homcount{\CoBG{p}{q}}{\Gr{G}} & \geq \delta^{pq} \bigl(n_1^p n_2^q + n_1^q n_2^p \bigr) \\
\label{eq4c: 16.09.2024}
&= \delta^{pq} \, \homcount{\CoBG{p}{q}}{\CoBG{n_1}{n_2}},
\end{align}
with equality in \eqref{eq4b: 16.09.2024} attained at both $\delta=0$ and $\delta=1$, corresponding to the
cases where $\Gr{G}$ is an empty graph and $\Gr{G} = \CoBG{n_1}{n_2}$, respectively.}
\end{proposition}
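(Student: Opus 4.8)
The plan is to argue directly by an entropy construction rather than through the exact formula of Proposition~\ref{prop: exact counting: CoBG --> BG}. Write $\set{L}$ and $\set{R}$ for the partite sets of $\Gr{G}$, of sizes $n_1$ and $n_2$. Since $\CoBG{p}{q}$ is connected and bipartite, every homomorphism $\CoBG{p}{q}\to\Gr{G}$ sends the two partite classes of $\CoBG{p}{q}$ into $\set{L}$ and $\set{R}$ in one of exactly two orientations, so $\homcount{\CoBG{p}{q}}{\Gr{G}}=M_1+M_2$, where $M_1$ counts the homomorphisms that map the $p$-class into $\set{L}$ (hence the $q$-class into $\set{R}$) and $M_2$ those of the opposite orientation. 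A homomorphism counted by $M_1$ is determined by the images $(y_1,\ldots,y_q)\in\set{R}^q$ of the $q$-class together with the images of the $p$-class, each of the latter being an arbitrary vertex of $\Ng{y_1}\cap\cdots\cap\Ng{y_q}$ (a subset of $\set{L}$), chosen independently; hence
\[
M_1=\sum_{(y_1,\ldots,y_q)\in\set{R}^q}\bigcard{\Ng{y_1}\cap\cdots\cap\Ng{y_q}}^{p},
\]
and symmetrically for $M_2$ with $\set{L}$ and $\set{R}$ interchanged. We may assume $\Gr{G}$ has no isolated vertices, which affects neither $M_1$, $M_2$, nor what has to be proven. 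It then suffices to prove $M_1\ge n_1^pn_2^q\,\delta^{pq}$; by the $\set{L}\leftrightarrow\set{R}$ symmetry this also gives $M_2\ge n_1^qn_2^p\,\delta^{pq}$, and adding the two and using $n_1^pn_2^q+n_1^qn_2^p=\homcount{\CoBG{p}{q}}{\CoBG{n_1}{n_2}}$ yields~\eqref{eq4b: 16.09.2024} and~\eqref{eq4c: 16.09.2024}.

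To bound $M_1$ I would build a random homomorphism $\Phi=(X_1,\ldots,X_p,Y_1,\ldots,Y_q)$ counted by $M_1$ and invoke the uniform bound~\eqref{eq: uniform bound}, which gives $\log M_1\ge\Ent{\Phi}$. Generate the coordinates in the order $X_1,Y_1,Y_2,\ldots,Y_q,X_2,\ldots,X_p$: let $(X_1,Y_1)$ be a uniformly random edge of $\Gr{G}$ (with $X_1\in\set{L}$); given $X_1$, let $Y_2,\ldots,Y_q$ be i.i.d.\ uniform on $\Ng{X_1}$; and given $Y_1,\ldots,Y_q$, let $X_2,\ldots,X_p$ be i.i.d.\ uniform on $\Ng{Y_1}\cap\cdots\cap\Ng{Y_q}$, a set that is nonempty since it contains $X_1$. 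Then $\Phi$ always lies in the set counted by $M_1$, and by the chain rule~\eqref{eq: chain rule},
\[
\Ent{\Phi}=\log\card{\E{\Gr{G}}}+(q-1)\,\mathbb{E}\bigl[\log d_{\Gr{G}}(X_1)\bigr]+(p-1)\,\mathbb{E}\bigl[\log\bigcard{\Ng{Y_1}\cap\cdots\cap\Ng{Y_q}}\bigr],
\]
where $X_1$ has the edge-biased law $\Pr[X_1=w]=d_{\Gr{G}}(w)/\card{\E{\Gr{G}}}$ on $\set{L}$. Applying~\eqref{eq: uniform bound} to this law, $-\sum_{w\in\set{L}}\tfrac{d_{\Gr{G}}(w)}{\card{\E{\Gr{G}}}}\log\tfrac{d_{\Gr{G}}(w)}{\card{\E{\Gr{G}}}}\le\log n_1$, which rearranges to $\mathbb{E}[\log d_{\Gr{G}}(X_1)]\ge\log\bigl(\card{\E{\Gr{G}}}/n_1\bigr)=\log(n_2\delta)$ since $\card{\E{\Gr{G}}}=n_1n_2\delta$. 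Combined with $\log\card{\E{\Gr{G}}}=\log(n_1n_2\delta)$, this already settles the case $p=1$, and for $p\ge2$ it reduces the claim to the estimate $\mathbb{E}\bigl[\log\bigcard{\Ng{Y_1}\cap\cdots\cap\Ng{Y_q}}\bigr]\ge\log(n_1\delta^q)$, after which a short bookkeeping of exponents (one factor $\delta$ from the edge, $q-1$ from the $Y$-block, and $q$ more from each of the $p-1$ free slots of the $X$-block) gives exactly $\log M_1\ge p\log n_1+q\log n_2+pq\log\delta$.

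The common-neighbourhood estimate $\mathbb{E}\bigl[\log\bigcard{\Ng{Y_1}\cap\cdots\cap\Ng{Y_q}}\bigr]\ge\log(n_1\delta^q)$ is the crux, and the step I expect to be the main obstacle: unlike the single-degree term it is not delivered by one application of the uniform bound, since the intersection $\Ng{Y_1}\cap\cdots\cap\Ng{Y_k}$ can shrink multiplicatively in $k$, and one must show it shrinks on average by a factor of at most $\delta$ per additional $Y_j$. I would prove it by induction on $q$, peeling off $Y_q$ and at each stage applying the uniform bound to the uniform distribution on the current intersection while using that this intersection still contains $X_1$ (so it is nonempty and the conditional law is well defined); pinning down how this peeling interacts with the edge-biasing of $X_1$ is the delicate point. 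The same argument with $\set{L}$ and $\set{R}$ exchanged gives $M_2\ge n_1^qn_2^p\delta^{pq}$. For the equality cases: if $\delta=0$ then $\Gr{G}$ is edgeless while $\CoBG{p}{q}$ has an edge, so $\homcount{\CoBG{p}{q}}{\Gr{G}}=0$, matching the right-hand side; and if $\delta=1$ then $\Gr{G}=\CoBG{n_1}{n_2}$, every map in either orientation is a homomorphism, and $\homcount{\CoBG{p}{q}}{\Gr{G}}=n_1^pn_2^q+n_1^qn_2^p$, again with equality. Finally, I note that~\eqref{eq4b: 16.09.2024} can alternatively be obtained by applying Jensen's inequality twice to the combinatorial identity for $M_1$ above (first with exponent $p$ to the outer sum, then with exponent $q$ to $\sum_{w\in\set{L}}d_{\Gr{G}}(w)^q$); the entropy route is preferred here because it is the one that adapts to the refined bound of Proposition~\ref{prop.: refined IT-LB}.
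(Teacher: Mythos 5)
Your framework is sound: the split $\homcount{\CoBG{p}{q}}{\Gr{G}}=M_1+M_2$ by orientation, the identity $M_1=\sum_{(y_1,\dots,y_q)\in\set{R}^q}\bigcard{\Ng{y_1}\cap\cdots\cap\Ng{y_q}}^{p}$, the bound $\log M_1\ge\Ent{\Phi}$ via \eqref{eq: uniform bound}, the chain-rule bookkeeping, and the estimate $\mathbb{E}[\log d_{\Gr{G}}(X_1)]\ge\log(n_2\delta)$ are all correct. But the proof is incomplete at exactly the point you flag: the common-neighbourhood estimate $\mathbb{E}\bigl[\log\bigcard{\Ng{Y_1}\cap\cdots\cap\Ng{Y_q}}\bigr]\ge\log(n_1\delta^q)$ is asserted, not proved, and the peeling induction you sketch does not go through as described. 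Its natural conditional (history-by-history) form is false: take $\set{L}=\{w,u_1,\dots,u_m\}$, $\set{R}=\{y_0,\dots,y_m\}$, with $w$ adjacent to all of $\set{R}$ and each $u_i$ adjacent only to $y_0$; conditioned on $X_1=w$ and $Y_1=y_0$ the current intersection is all of $\set{L}$, yet the expected log-size of the next intersection is $\tfrac{\log(m+1)}{m+1}\to 0$, whereas a per-step loss of at most $\log\delta$ would require at least $\log\card{\set{L}}+\log\delta=\log\tfrac{2m+1}{m+1}\to\log 2$. So any peeling can only work on average over the edge-biased history, and pinning that down is precisely what you leave open.

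The gap is real but fillable, and the shortest fix is exactly the paper's device. Since $X_1\in\Ng{Y_1}\cap\cdots\cap\Ng{Y_q}$ almost surely, one has $\mathbb{E}\bigl[\log\bigcard{\Ng{Y_1}\cap\cdots\cap\Ng{Y_q}}\bigr]\ge\EntCond{X_1}{Y_1,\dots,Y_q}=\Ent{X_1,Y_1,\dots,Y_q}-\Ent{Y_1,\dots,Y_q}\ge\bigl[\log(\delta n_1n_2)+(q-1)\log(n_2\delta)\bigr]-q\log n_2=\log(n_1\delta^q)$, with no induction: only the chain rule and $\Ent{Y_1,\dots,Y_q}\le q\log n_2$ are used. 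This is in essence what the paper does throughout: it does not draw $X_2,\dots,X_p$ uniformly on the common neighbourhood, but i.i.d.\ from the conditional law of $X_1$ given $(Y_1,\dots,Y_q)$, so that each $(X_i,Y_1,\dots,Y_q)$ has the law of $(X_1,Y_1,\dots,Y_q)$ and the joint entropy is handled by chain-rule identities in which the unknown entropies $\Ent{X_1}$ and $\Ent{Y_1,\dots,Y_q}$ appear with negative coefficients and are simply bounded by $\log n_1$ and $q\log n_2$; the quantity you struggle with never needs to be estimated separately. Finally, your closing parenthetical is itself a complete elementary proof and deserves more than a remark: since $\sum_{(y_1,\dots,y_q)\in\set{R}^q}\bigcard{\Ng{y_1}\cap\cdots\cap\Ng{y_q}}=\sum_{w\in\set{L}}d_{\Gr{G}}(w)^q$, two applications of Jensen's inequality give $M_1\ge n_2^q\bigl(n_2^{-q}\sum_{w\in\set{L}}d_{\Gr{G}}(w)^q\bigr)^p\ge n_2^q\bigl(n_2^{-q}\,n_1(n_2\delta)^q\bigr)^p=\delta^{pq}n_1^pn_2^q$; had you carried that out instead of the unproved entropy estimate, the argument would be complete (and genuinely different from, indeed simpler than, the paper's entropy proof, though it does not extend to Proposition~\ref{prop.: refined IT-LB} as directly).
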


\begin{proof}
If $\delta=0$, then $\Gr{G}$ is an empty (edgeless) bipartite graph, so $\homcount{\CoBG{p}{q}}{\Gr{G}} = 0$,
and \eqref{eq4b: 16.09.2024} trivially holds with equality. Suppose $\delta \in (0,1]$.

Let $\set{U}$ and $\set{V}$ denote the partite vertex sets of the bipartite graph $\Gr{G}$, where
$\card{\set{U}} = n_1$ and $\card{\set{V}} = n_2$.
Let $(U,V)$ be a random vector taking values in $\set{U} \times \set{V}$, and assume that $\{U,V\}$ is
distributed uniformly at random over the edge set of $\Gr{G}$. Then, by the condition for equality in
\eqref{eq: uniform bound}, the joint entropy of $(U,V)$ is given by
\begin{align}
\Ent{U,V} &= \log \, \bigcard{\E{\Gr{G}}} \nonumber \\
&=\log(\delta n_1 n_2).  \label{eq: Ent U,V}
\end{align}

Let $\pmfOf{U}$ denote the marginal probability mass function (PMF) of $U$, and let $\CondpmfOf{V}{U}$ denote
the conditional PMF of $V$ given $U$. Define the random vectors ${\bf{U}}^p \triangleq (U_1, \ldots, U_p)$ and
${\bf{V}}^q \triangleq (V_1, \ldots, V_q)$, and let $\underline{u} = (u_1, \ldots, u_p)$
and $\underline{v} = (v_1, \ldots, v_q)$ denote their respective realizations, where $p, q \in \naturals$.

We now state two lemmas that will be used in the sequel for the proof of Proposition~\ref{prop.: IT-LB}.
\begin{lemma}
\label{lemma 1: 08.08.25}
{\em Let $({\bf U}^p, {\bf V}^q)$ be constructed as follows:
\begin{enumerate}[a)]
\item $\{V_j\}_{j=1}^q$ are conditionally independent and identically distributed (i.i.d.) given $U$, with
\begin{align}
\label{eq1: cond. PMF}
\CondpmfOf{{\bf V}^q}{U}(\underline{v} \midnew u)
= \prod_{j=1}^q \CondpmfOf{V}{U}(v_j \midnew u),
\quad u \in \set{U}, \; \underline{v} \in \set{V}^q.
\end{align}
\item $\{U_i\}_{i=1}^p$ are conditionally i.i.d. given ${\bf V}^q$, with
\begin{align}
\label{eq2a: cond. PMF}
\CondpmfOf{{\bf U}^p}{{\bf V}^q}(\underline{u} \midnew \underline{v})
= \prod_{i=1}^p \CondpmfOf{U_i}{{\bf V}^q}(u_i \midnew \underline{v}),
\end{align}
where each of the $p$ identical conditional PMFs on the right-hand side of \eqref{eq2a: cond. PMF} is given by
\begin{align}
\label{eq2b: cond. PMF}
\CondpmfOf{U_i}{{\bf V}^q}(u \midnew \underline{v})
= \frac{\pmfOf{U}(u) \, \overset{q}{\underset{j=1}{\prod}} \CondpmfOf{V}{U}(v_j \midnew u)}
{\underset{u' \in \set{U}}{\sum} \biggl\{ \pmfOf{U}(u') \, \overset{q}{\underset{j=1}{\prod}} \CondpmfOf{V}{U}(v_j \midnew u') \biggr\} },
\quad u \in \set{U}, \; \underline{v} \in \set{V}^q, \; i \in \OneTo{p}.
\end{align}
\end{enumerate}
Then,
\begin{enumerate}[i)]
\item $U_i \sim U$ for all $i \in \OneTo{p}$.
\item $(U_i,{\bf V}^q) \sim (U,{\bf V}^q)$ and $(U_i,V_j) \sim (U,V)$ for all $i \in \OneTo{p}$, $j \in \OneTo{q}$.
\end{enumerate}}
\end{lemma}
\begin{proof}
From \eqref{eq1: cond. PMF},
\begin{align}
\label{eq2c: cond. PMF}
\pmfOf{{\bf V}^q}(\underline{v})
= \sum_{u \in \set{U}} \biggl\{ \pmfOf{U}(u) \, \prod_{j=1}^q \CondpmfOf{V}{U}(v_j \midnew u) \biggr\}.
\end{align}
Hence, for $i \in \OneTo{p}$,
\begin{align}
\pmfOf{U_i}(u)
&= \sum_{\underline{v} \in \set{V}^q} \biggl\{ \CondpmfOf{U_i}{{\bf V}^q}(u \midnew \underline{v}) \, \pmfOf{{\bf V}^q}(\underline{v}) \biggr\} \label{eq1: 17.3.25} \\
&= \sum_{\underline{v} \in \set{V}^q} \biggl\{ \pmfOf{U}(u) \, \prod_{j=1}^q \CondpmfOf{V}{U}(v_j \midnew u) \biggr\} \label{eq2: 17.3.25} \\
&= \pmfOf{U}(u) \, \prod_{j=1}^q \sum_{v_j \in \set{V}} \CondpmfOf{V}{U}(v_j \midnew u) \label{eq3: 17.3.25} \\
&= \pmfOf{U}(u),  \label{eq4: 17.3.25}
\end{align}
where \eqref{eq1: 17.3.25} holds by the law of total probability,
\eqref{eq2: 17.3.25} holds by combining \eqref{eq2b: cond. PMF} and \eqref{eq2c: cond. PMF},
\eqref{eq3: 17.3.25} follows from the factorization of the summand, and \eqref{eq4: 17.3.25}
holds since the conditional probability masses in each inner summation equal~1.
This proves~(i). Moreover,
for all $i \in \OneTo{p}$,
\begin{align}
\label{eq1: 25.6.25}
\pmfOf{U_i,{\bf V}^q}(u,\underline{v})
&= \pmfOf{U}(u) \, \prod_{j=1}^q \CondpmfOf{V}{U}(v_j \midnew u) \\
\label{eq5: 17.3.25}
&= \pmfOf{U,{\bf V}^q}(u,\underline{v}),
\end{align}
where \eqref{eq1: 25.6.25} holds by combining \eqref{eq2b: cond. PMF} and \eqref{eq2c: cond. PMF},
and \eqref{eq5: 17.3.25} holds by \eqref{eq1: cond. PMF}.
Hence, $(U_i,{\bf V}^q) \sim (U,{\bf V}^q)$, and marginalization over $\{v_k\}_{k \in \OneTo{q} \setminus \{j\}}$ yields
\begin{align}
\pmfOf{U_i,V_j}(u,v) = \pmfOf{U,V}(u,v),
\end{align}
which proves~(ii).
\end{proof}

\begin{lemma}
\label{lemma 2: 08.08.25}
{\em In the setting of Lemma~\ref{lemma 1: 08.08.25}, the joint entropies satisfy
\begin{align}
\label{eq1: 09.08.25}
& \Ent{U_1, {\bf{V}}^q} \geq \log(\delta^q n_1 n_2^q), \\
\label{eq2: 09.08.25}
& \Ent{{\bf{U}}^p, {\bf{V}}^q} \geq \log(\delta^{pq} n_1^p n_2^q).
\end{align}}
\end{lemma}
\begin{proof}
The joint entropy of the random subvector $(U_1, {\bf{V}}^q)$ satisfies
\begin{align}
\Ent{U_1, {\bf{V}}^q} &= \Ent{U, {\bf{V}}^q} \label{eq0d: 16.03.2025} \\
&=\Ent{U} + \sum_{j=1}^q \EntCond{V_j}{U} \label{eq0: 16.03.2025} \\
&= \Ent{U} + q \EntCond{V}{U} \label{eq0a: 16.03.2025} \\
&= q \Ent{U,V} - (q-1) \Ent{U} \label{eq0b: 16.03.2025} \\
&= q \log(\delta n_1 n_2) - (q-1) \Ent{U} \label{eq0c: 16.03.2025} \\
&\geq q \log(\delta n_1 n_2) - (q-1) \log n_1 \label{eq1: 16.03.2025} \\
&= \log(\delta^q n_1 n_2^q),  \label{eq5: 16.09.2024}
\end{align}
where \eqref{eq0d: 16.03.2025} holds since
$(U_1, {\bf{V}}^q) \sim (U, {\bf{V}}^q)$ by Item~(ii) of Lemma~\ref{lemma 1: 08.08.25};
\eqref{eq0: 16.03.2025} follows from the chain rule in \eqref{eq: chain rule} and the fact
that, by \eqref{eq1: cond. PMF}, the entries of ${\bf{V}}^q$ are conditionally independent
given $U$; \eqref{eq0a: 16.03.2025} holds since $(U, V_j) \sim (U,V)$ by \eqref{eq1: cond. PMF};
\eqref{eq0b: 16.03.2025} is a second application of \eqref{eq: chain rule}; \eqref{eq0c: 16.03.2025}
holds by \eqref{eq: Ent U,V}, and \eqref{eq1: 16.03.2025} follows from the uniform bound in
\eqref{eq: uniform bound}, which yields $\Ent{U} \leq \log \card{\set{U}} = \log n_1$.
Consequently, the joint entropy of $({\bf{U}}^p, {\bf{V}}^q)$ satisfies
\begin{align}
\hspace*{-0.4cm} \Ent{{\bf{U}}^p, {\bf{V}}^q}
&= \Ent{{\bf{V}}^q} + \sum_{i=1}^p \EntCond{U_i}{{\bf{V}}^q} \label{eq2a: 16.03.2025} \\
&= \Ent{{\bf{V}}^q} + p \EntCond{U_1}{{\bf{V}}^q} \label{eq2b: 16.03.2025} \\
%&= p \bigl[ \Ent{{\bf{V}}^q} + \EntCond{U_1}{{\bf{V}}^q} \bigr] - (p-1) \Ent{{\bf{V}}^q} \label{eq2b2: 16.03.2025} \\
&= p \Ent{U_1, {\bf{V}}^q} - (p-1) \Ent{{\bf{V}}^q} \label{eq2c: 16.03.2025} \\
%&\geq p \log(\delta^q n_1 n_2^q) - (p-1) \Ent{V_1, \ldots, V_q} \label{eq:13.03.2025}  \\
&\geq p \log(\delta^q n_1 n_2^q) - (p-1) \log(n_2^q) \label{eq3: 16.03.2025} \\
&= \log(\delta^{pq} n_1^p n_2^q),  \label{eq6: 16.09.2024}
\end{align}
where \eqref{eq2a: 16.03.2025} holds by the chain rule in \eqref{eq: chain rule} and since, by
\eqref{eq2a: cond. PMF}, the random variables $U_1, \ldots, U_p$ are conditionally independent given ${\bf{V}}^q$;
\eqref{eq2b: 16.03.2025} holds since, by \eqref{eq2b: cond. PMF}, all the $U_i$'s ($i \in \OneTo{p}$)
are identically distributed given ${\bf{V}}^q$;
%\eqref{eq2b2: 16.03.2025} is basic algebra;
\eqref{eq2c: 16.03.2025} holds by another use of the chain rule; finally, \eqref{eq3: 16.03.2025} holds
by \eqref{eq0: 16.03.2025}--\eqref{eq5: 16.09.2024} and the uniform bound in \eqref{eq: uniform bound},
which implies that $\Ent{{\bf{V}}^q} \leq \log(\card{\set{V}}^{\, q}) = \log(n_2^q)$.
\end{proof}

We proceed with the proof of Proposition~\ref{prop.: IT-LB}. Each realization $(\underline{u}, \underline{v}) \in \set{U}^p \times \set{V}^q$
of the random vector $({\bf{U}}^p, {\bf{V}}^q)$ in Lemma~\ref{lemma 1: 08.08.25} corresponds uniquely to a homomorphism
$\phi \in \Hom{\CoBG{p}{q}}{\Gr{G}}$. To this end, label the vertices of the complete bipartite graph $\CoBG{p}{q}$ by the
elements of the set $\OneTo{p+q}$, assigning the labels $1, \ldots, p$ to the vertices in one partite set of size $p$, and
the labels $p+1, \ldots, p+q$ to those in the second partite set of size $q$.
For every $i \in \OneTo{p}$, map vertex $i \in \V{\CoBG{p}{q}}$ to vertex $u_i \in \set{U}$ from one partite set of $\Gr{G}$.
Likewise, for every $j \in \OneTo{q}$, map vertex $p+j \in \V{\CoBG{p}{q}}$ to vertex $v_j \in \set{V}$ from the second partite set
of $\Gr{G}$. For such a mapping $\phi \colon \V{\CoBG{p}{q}} \to \V{\Gr{G}}$, each edge $\{i, p+j\} \in \E{\CoBG{p}{q}}$ is mapped
to the edge $\{u_i, v_j\} \in \E{\Gr{G}}$, since $\{U_i, V_j\} \in \E{\Gr{G}}$ holds by the construction of $(U,V)$ and since
$(U_i, V_j) \sim (U,V)$ (see Item~(ii) of Lemma~\ref{lemma 1: 08.08.25}).
This thereby defines a homomorphism in $\Hom{\CoBG{p}{q}}{\Gr{G}}$. Let $\set{H}_1$ be the subset of $\Hom{\CoBG{p}{q}}{\Gr{G}}$
in which the vertices in the partite sets of sizes $p$ and $q$ of $\CoBG{p}{q}$ are mapped, respectively, to vertices in the
partite sets of sizes $n_1$ and $n_2$ of $\Gr{G}$.
The suggested correspondence is injective since distinct realizations $(\underline{u}, \underline{v})$ of the random vector
$({\bf{U}}^p, {\bf{V}}^q)$ yield distinct homomorphisms in $\set{H}_1 \subseteq \Hom{\CoBG{p}{q}}{\Gr{G}}$.
By the uniform bound in \eqref{eq: uniform bound}, it follows that
\begin{align}
\label{eq7: 16.09.2024}
\Ent{{\bf{U}}^p, {\bf{V}}^q} \leq \log \card{\set{H}_1}.
\end{align}
Combining inequalities \eqref{eq2: 09.08.25} and \eqref{eq7: 16.09.2024} yields
\begin{align}
\label{eq8: 16.09.2024}
\card{\set{H}_1} \geq \delta^{pq} n_1^p n_2^q.
\end{align}
Likewise, let $\set{H}_2$ be the subset of $\Hom{\CoBG{p}{q}}{\Gr{G}}$ in which the vertices in the partite sets of sizes $p$ and $q$
of $\CoBG{p}{q}$ are mapped, respectively, to vertices in the partite sets of sizes $n_2$ and $n_1$ of $\Gr{G}$.
Analogously to \eqref{eq8: 16.09.2024}, interchanging $n_1$ and $n_2$ yields
\begin{align}
\label{eq8b: 16.09.2024}
\card{\set{H}_2} \geq \delta^{pq} n_1^q n_2^p.
\end{align}
The subsets $\set{H}_1$ and $\set{H}_2$ of the set of homomorphisms $\Hom{\CoBG{p}{q}}{\Gr{G}}$ are, by definition,
disjoint. Moreover, they form a partition of $\Hom{\CoBG{p}{q}}{\Gr{G}}$. Indeed, since the complete partite graph
$\CoBG{p}{q}$ is connected, all its vertices in one partite set must be mapped to vertices in the same partite set
of $\Gr{G}$, and all vertices in the other partite set of $\CoBG{p}{q}$ should must be mapped to vertices in the other
partite set of $\Gr{G}$; this bipartition-preserving property holds more generally if and only if the source bipartite
graph is connected. Consequently, it follows from \eqref{eq8: 16.09.2024} and \eqref{eq8b: 16.09.2024} that
\begin{align}
\homcount{\CoBG{p}{q}}{\Gr{G}} &= \card{\set{H}_1} + \card{\set{H}_2} \label{eq8c: 16.09.2024} \\
&\geq \delta^{pq} \bigl(n_1^p n_2^q + n_1^q n_2^p \bigr), \label{eq9: 16.09.2024}
\end{align}
which proves inequality \eqref{eq4b: 16.09.2024}.
Equality \eqref{eq4c: 16.09.2024} holds by the identity
\begin{align}
\label{eq1: 29.07.25}
\homcount{\CoBG{p}{q}}{\CoBG{n_1}{n_2}} = n_1^p n_2^q + n_1^q n_2^p.
\end{align}
Finally, inequality \eqref{eq4b: 16.09.2024} holds with equality at $\delta=1$, yielding in this case that $\Gr{G} = \CoBG{n_1}{n_2}$.
\end{proof}

\begin{corollary}
\label{corollary: homomorphism densities}
{\em Let $\Gr{G}$ be a simple bipartite graph with partite sets of sizes $n_1$ and $n_2$,
and $\delta n_1 n_2$ edges for some $\delta \in (0,1]$. Then, for all $p,q, n_1, n_2 \in \naturals$,
\begin{align}
\label{eq3: 16.06.2025}
t(\CoBG{p}{q}, \Gr{G}) \geq \delta^{pq} \; t(\CoBG{p}{q},\CoBG{n_1}{n_2}).
\end{align}
In other words, the $\CoBG{p}{q}$-homomorphism density in a bipartite graph $\Gr{G}$, with an edge density $\delta \in [0,1]$,
is at least $\delta^{\, \card{\E{\CoBG{p}{q}}}}$ times the $\CoBG{p}{q}$-homomorphism density in the
complete bipartite graph with the same partite vertex sets as $\Gr{G}$.
In particular, inequality \eqref{eq3: 16.06.2025} holds with equality at $\delta=1$.}
\end{corollary}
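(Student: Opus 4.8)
The plan is to obtain \eqref{eq3: 16.06.2025} as an immediate normalization of Proposition~\ref{prop.: IT-LB}. The key observation is that in both homomorphism densities appearing in \eqref{eq3: 16.06.2025} the source graph is $\CoBG{p}{q}$, so the common exponent is $v(\CoBG{p}{q}) = p+q$, and the two target graphs $\Gr{G}$ and $\CoBG{n_1}{n_2}$ have the same number of vertices, namely $v(\Gr{G}) = v(\CoBG{n_1}{n_2}) = n_1+n_2$. Hence, by Definition~\ref{definition: Homomorphism densities}, both $t(\CoBG{p}{q}, \Gr{G})$ and $t(\CoBG{p}{q}, \CoBG{n_1}{n_2})$ are obtained from the respective homomorphism numbers by dividing by the single common factor $(n_1+n_2)^{\,p+q}$.

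First I would invoke the chain of relations \eqref{eq4b: 16.09.2024}--\eqref{eq4c: 16.09.2024} in Proposition~\ref{prop.: IT-LB}, which yields $\homcount{\CoBG{p}{q}}{\Gr{G}} \geq \delta^{pq}\, \homcount{\CoBG{p}{q}}{\CoBG{n_1}{n_2}}$ for $\delta = \delta(\Gr{G}) \in (0,1]$. Dividing both sides of this inequality by $(n_1+n_2)^{\,p+q}$ and applying the defining formula \eqref{eq: 16.06.2025} to each side then gives exactly \eqref{eq3: 16.06.2025}. For the verbal restatement, I would note that $\card{\E{\CoBG{p}{q}}} = pq$, so that $\delta^{pq} = \delta^{\,\card{\E{\CoBG{p}{q}}}}$, and that $\CoBG{n_1}{n_2}$ is precisely the complete bipartite graph on the same partite vertex sets as $\Gr{G}$. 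The equality assertion at $\delta=1$ follows from the corresponding equality case in Proposition~\ref{prop.: IT-LB}: when $\delta=1$ we have $\Gr{G} = \CoBG{n_1}{n_2}$, so \eqref{eq3: 16.06.2025} reduces to a trivial identity.

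I do not expect any genuine obstacle here, as the statement is a direct corollary; the only point requiring care is confirming that the normalizing denominators in the two densities truly coincide, which rests entirely on the elementary fact that $v(\Gr{G}) = v(\CoBG{n_1}{n_2}) = n_1+n_2$. (If one wished to also cover $\delta = 0$, one would simply observe that then $\Gr{G}$ is edgeless, so both $\homcount{\CoBG{p}{q}}{\Gr{G}}$ and $\delta^{pq}$ vanish and the bound holds trivially; but the hypothesis $\delta \in (0,1]$ makes this unnecessary.)
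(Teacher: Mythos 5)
Your proposal is correct and follows essentially the same route as the paper: invoke the inequality $\homcount{\CoBG{p}{q}}{\Gr{G}} \geq \delta^{pq}\,\homcount{\CoBG{p}{q}}{\CoBG{n_1}{n_2}}$ from Proposition~\ref{prop.: IT-LB}, observe that $\Gr{G}$ and $\CoBG{n_1}{n_2}$ share the same vertex set of size $n_1+n_2$, and normalize via Definition~\ref{definition: Homomorphism densities}. Your handling of the $\delta=1$ equality case likewise matches the paper's.
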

\begin{proof}
By \eqref{eq4c: 16.09.2024},
\begin{align}
\homcount{\CoBG{p}{q}}{\Gr{G}} \geq \delta^{pq} \, \homcount{\CoBG{p}{q}}{\CoBG{n_1}{n_2}},
\end{align}
and $\V{\Gr{G}} = \V{\CoBG{n_1}{n_2}}$ since, by assumption, $\Gr{G}$ is a spanning subgraph of the complete bipartite
graph $\CoBG{n_1}{n_2}$. The result in \eqref{eq3: 16.06.2025} then follows from Definition~\ref{definition: Homomorphism densities}.
\end{proof}

\begin{definition}[Sidorenko graph]
\label{definition: Sidorenko graph}
{\em A graph $\Gr{H}$ is said to be Sidorenko if, for every graph $\Gr{G}$,
\begin{align}
\label{eq2: 16.06.2025}
t(\Gr{H}, \Gr{G}) \geq t(\CoG{2}, \Gr{G})^{\, e(\Gr{H})},
\end{align}
where $e(\Gr{H}) \triangleq \card{ \hspace*{-0.05cm} \E{\Gr{H}}}$. Equivalently,
by \eqref{eq: 16.06.2025} and \eqref{eq: edges},
\begin{align}
\label{def: Sidorenko graph}
\frac{\homcount{\Gr{H}}{\Gr{G}}}{v(\Gr{G})^{\; v(\Gr{H})}} \geq
\biggl( \frac{2 \, e(\Gr{G})}{v(\Gr{G})^2} \biggr)^{\, e(\Gr{H})}.
\end{align}
In words, a graph $\Gr{H}$ is Sidorenko if the probability that a random uniform mapping from
$\V{\Gr{H}}$ to the vertex set of any graph $\Gr{G}$ forms a homomorphism is at least the product,
over all edges in $\Gr{H}$, of the probabilities that these edges are mapped to edges of $\Gr{G}$.
Inequality~\eqref{eq2: 16.06.2025} is referred to as Sidorenko's lower bound for homomorphism densities.}
\end{definition}

Sidorenko's conjecture states that every bipartite graph is Sidorenko \cite{Sidorenko93}. Related forms of this conjecture
appeared earlier in a work by Erd\H{o}s and Simonovits~\cite{Simonovits84}. While the conjecture remains an open problem
in its full generality, it is known that every bipartite graph containing a vertex adjacent to all vertices in its other
part is Sidorenko (see, e.g., \cite[Theorem~5.5.14]{Zhao23}, originally proved in \cite{ConlonFS10}, and simplified in
\cite{ConlonFS10b}). Additional classes of bipartite graphs that are Sidorenko have been established in \cite{ConlonKLL18}.

\begin{discussion}[Comparison to Sidorenko's lower bound]
\label{discussion: Comparison of the 1st IT LB to Sidorenko's lower bound}
{\em Every complete bipartite graph is known to be Sidorenko (see \cite[Theorem~5.5.12]{Zhao23}).
Specializing \eqref{def: Sidorenko graph} to a complete bipartite graph $\Gr{H} = \CoBG{p}{q}$, where $p,q \in \naturals$,
yields inequality \eqref{def: Sidorenko graph} with $v(\Gr{H}) = p+q$ and $e(\Gr{H}) = pq$.
Let us now further specialize it to the case where $\Gr{G}$ is a simple bipartite graph with partite sets of sizes
$n_1$ and $n_2$, has no isolated vertices, and contains $\delta n_1 n_2$ edges for some $\delta \in (0,1]$.
In this specialized setting, \eqref{def: Sidorenko graph} gives
\begin{align}
\label{eq2: 21.05.2025}
\homcount{\CoBG{p}{q}}{\Gr{G}} \geq (2 \delta)^{pq} (n_1+n_2)^{p+q-2pq} (n_1 n_2)^{pq} \triangleq \mathrm{LB}_1.
\end{align}
This lower bound on $\homcount{\CoBG{p}{q}}{\Gr{G}}$ is compared to the bound $\mathrm{LB}_2 \triangleq \delta^{pq} \, \bigl(n_1^p n_2^q + n_1^q n_2^p)$,
which appears as the leftmost inequality in \eqref{eq4b: 16.09.2024}.
To compare these two lower bounds, which are symmetric in $n_1$ and $n_2$ and also in $p$ and $q$, we examine the ratio $\frac{\mathrm{LB}_2}{\mathrm{LB}_1}$.
Without loss of generality, assume that $p \geq q$, and let $r \triangleq \frac{\max\{n_1,n_2\}}{\min\{n_1,n_2\}} \geq 1$.
By straightforward algebra, we get
\begin{align}
\label{eq5: 21.05.2025}
\frac{\mathrm{LB}_2}{\mathrm{LB}_1} &= 2^{-p} \, \Biggl( \frac{(1+r)^2}{2 r} \Biggr)^{p(q-1)} \; (1+r)^{p-q} \; \Bigl(1 + r^{-(p-q)} \Bigr) \\
&\geq 2^{-p} \, 2^{p(q-1)} \, 2^{p-q} \; \bigl(1 + r^{-(p-q)} \bigr) \nonumber \\
\label{eq6: 21.05.2025}
&= 2^{pq-(p+q)} \; \bigl(1 + r^{-|p-q|} \bigr).
\end{align}
By the symmetry of the right-hand side of \eqref{eq6: 21.05.2025} in $p$ and $q$, the earlier assumption that $p \geq q$ can be dropped. Consequently, the following cases hold:
\begin{enumerate}[(1)]
\item If $p=q$, then it follows from \eqref{eq6: 21.05.2025} that $\mathrm{LB}_2 \geq 2^{(p-1)^2} \, \mathrm{LB}_1$, and in particular, $\mathrm{LB}_2 \geq \mathrm{LB}_1$.
\item Else, if $p>1$ and $q=1$ (i.e., $\CoBG{p}{q}$ is a star graph), then by Jensen's inequality
\begin{align}
\frac{\mathrm{LB}_2}{\mathrm{LB}_1} = \frac{\tfrac12 \bigl(n_1^{1-p} + n_2^{1-p}\bigr)}{\Bigl(\frac{n_1+n_2}{2}\Bigr)^{1-p}} \geq 1,
\end{align}
so $\mathrm{LB}_2 \geq \mathrm{LB}_1$. Due to symmetry in $p$ and $q$, it also holds if $p=1$ and $q>1$.
\item Otherwise (i.e., if $p,q \geq 2$ and $p \neq q$), we get from \eqref{eq6: 21.05.2025} that $\mathrm{LB}_2 > 2^{pq-(p+q)} \, \mathrm{LB}_1 \geq 2 \, \mathrm{LB}_1$.
\end{enumerate}
To conclude, our lower bound on $\homcount{\CoBG{p}{q}}{\Gr{G}}$ in the right-hand side of \eqref{eq4b: 16.09.2024}
compares favorably to Sidorenko's lower bound given in \eqref{eq2: 21.05.2025}. Equivalently, inequality \eqref{eq3: 16.06.2025}
compares favorably to Sidorenko's lower bound expressed in terms of homomorphism densities (see \eqref{eq2: 16.06.2025} with
$\Gr{H} = \CoBG{p}{q}$).}
\end{discussion}

\subsection{A refined and strengthened entropy-based lower bound}
\label{subsection: refined entropy-based bound}
We next present a refinement of the lower bound on $\homcount{\CoBG{p}{q}}{\Gr{G}}$
in Proposition~\ref{prop.: IT-LB}. To this end, we define the degree profile (or degree
sequence) of the bipartite graph $\Gr{G}$ with partite sets $\set{U}$ and $\set{V}$ of sizes
$n_1$ and $n_2$, respectively, as the vectors
\begin{align}
\label{eq1: degree profile U}
& {\bf{d}}^{(\set{U})}(\Gr{G}) = \Bigl(d_1^{(\set{U})}, \ldots, d_{n_1}^{(\set{U})} \Bigr), \\
\label{eq1: degree profile V}
& {\bf{d}}^{(\set{V})}(\Gr{G}) = \Bigl(d_1^{(\set{V})}, \ldots, d_{n_2}^{(\set{V})} \Bigr),
\end{align}
where $d_i^{(\set{U})}$ and $d_j^{(\set{V})}$, for all $i \in \OneTo{n_1}$ and $j \in \OneTo{n_2}$, denote, respectively,
the degrees of the $i$-th vertex in $\set{U}$ and the $j$-th vertex in $\set{V}$.
The normalized degree profiles per edge in $\Gr{G}$ are given by
\begin{align}
\overline{\bf{d}}^{(\set{U})}(\Gr{G}) &= \Bigl(\overline{d}_1^{(\set{U})}, \ldots, \overline{d}_{n_1}^{(\set{U})} \Bigr)
\label{eq1: normalized degree profile U}
\triangleq \biggl(\frac{d_1^{(\set{U})}}{\card{\E{\Gr{G}}}}, \ldots, \frac{d_{n_1}^{(\set{U})}}{\card{\E{\Gr{G}}}} \biggr), \\[0.1cm]
\overline{\bf{d}}^{(\set{V})}(\Gr{G}) &= \Bigl(\overline{d}_1^{(\set{V})}, \ldots, \overline{d}_{n_2}^{(\set{V})} \Bigr)
\label{eq1: normalized degree profile V}
\triangleq \biggl(\frac{d_1^{(\set{V})}}{\card{\E{\Gr{G}}}}, \ldots, \frac{d_{n_2}^{(\set{V})}}{\card{\E{\Gr{G}}}} \biggr),
\end{align}
so $\overline{\bf{d}}^{(\set{U})}(\Gr{G})$ and $\overline{\bf{d}}^{(\set{V})}(\Gr{G})$ form probability vectors since their entries are
nonnegative and satisfy
\begin{align}
\label{eq2: 25.6.2025}
\sum_{k=1}^{n_1} \overline{d}_k^{(\set{U})} = 1 = \sum_{k=1}^{n_2} \overline{d}_k^{(\set{V})}.
\end{align}

The following refined lower bound on the number of homomorphisms from a complete bipartite graph to a simple bipartite graph
improves upon the earlier bound in Proposition~\ref{prop.: IT-LB}; see Section~\ref{section: numerical results}.
\begin{proposition}
\label{prop.: refined IT-LB}
{\em Let $\Gr{G}$ be a simple bipartite graph with partite sets $\set{U}$ and $\set{V}$ of sizes $n_1$ and $n_2$, respectively,
and edge density $\delta$ as given in \eqref{eq: edge density in bipartite graph}. Let $\overline{\bf{d}}^{(\set{U})}(\Gr{G})$ and
$\overline{\bf{d}}^{(\set{V})}(\Gr{G})$ denote the normalized degree profiles of $\Gr{G}$ with the partite sets $\set{U}$ and $\set{V}$, respectively,
as defined in \eqref{eq1: normalized degree profile U} and \eqref{eq1: normalized degree profile V}. Then, for all $p, q \in \naturals$,
\begin{align}
\homcount{\CoBG{p}{q}}{\Gr{G}}
&\geq \max \Bigl\{ (\delta n_1 n_2)^{pq} \, \exp \bigl( - p(q-1) \, x - q(p-1) \, y \bigr), \nonumber \\
& \hspace*{1.3cm} (\delta n_1 n_2)^q \, \exp \bigl( -(q-1) \, x \bigr), \; (\delta n_1 n_2)^p \, \exp \bigl( -(p-1) \, y \bigr) \Bigr\} \nonumber \\
& + \max \Bigl\{ (\delta n_1 n_2)^{pq} \, \exp \bigl( - q(p-1) \, x - p(q-1) \, y \bigr), \nonumber \\
\label{eq3: 16.07.2025}
& \hspace*{1.3cm} (\delta n_1 n_2)^p \, \exp \bigl( -(p-1) \, x \bigr), \; (\delta n_1 n_2)^q \, \exp \bigl( -(q-1) \, y \bigr) \Bigr\},
\end{align}
where
\begin{align}
x \triangleq -\sum_{k=1}^{n_1} \overline{d}_k^{(\set{U})} \, \log \overline{d}_k^{(\set{U})}, \qquad
y \triangleq -\sum_{k=1}^{n_2} \overline{d}_k^{(\set{V})} \, \log \overline{d}_k^{(\set{V})}.
\end{align}}
\end{proposition}
\begin{proof}
By revisiting the proof of Proposition~\ref{prop.: IT-LB}, let $\{U,V\}$ be uniformly distributed over the edges of $\Gr{G}$,
where $U \in \set{U}$ and $V \in \set{V}$.
It then follows that the marginal PMFs of the random variables $U$ and $V$ are equal to the normalized degree profiles
$\overline{\bf{d}}^{(\set{U})}(\Gr{G})$ and $\overline{\bf{d}}^{(\set{V})}(\Gr{G})$, respectively. Hence, the entropies
of $U$ and $V$ are given by
\begin{align}
\label{eq: Ent U}
& \Ent{U} = \BigEnt{\overline{\bf{d}}^{(\set{U})}(\Gr{G})} = x, \\
\label{eq: Ent V}
& \Ent{V} = \BigEnt{\overline{\bf{d}}^{(\set{V})}(\Gr{G})} = y.
\end{align}
Since $(U_i, V_j) \sim (U,V)$ for all $i \in \OneTo{p}$, and $V_j \sim V$ for all $j \in \OneTo{q}$, it follows that
\begin{align}
\label{eq2: 22.05.2025}
\Ent{U_1, V_1, \ldots, V_q} &= q \log(\delta n_1 n_2) - (q-1) \Ent{U},
\end{align}
which replaces inequality \eqref{eq5: 16.09.2024} by equality \eqref{eq2: 22.05.2025},
with $\Ent{U}$ as given in \eqref{eq: Ent U}. Then,
\begin{align}
& \hspace*{-0.3cm} \Ent{{\bf{U}}^p, {\bf{V}}^q} \nonumber \\[0.1cm]
\label{eq4: 22.05.2025}
&= p \Ent{U_1, {\bf{V}}^q} - (p-1) \Ent{{\bf{V}}^q} \\[0.1cm]
\label{eq5: 22.05.2025}
&= p \bigl[ q \log(\delta n_1 n_2) - (q-1) \Ent{U} \bigr] - (p-1) \Ent{{\bf{V}}^q} \\[0.1cm]
\label{eq6: 22.05.2025}
&\geq pq \log(\delta n_1 n_2) - p(q-1) \, \Ent{U} - (p-1) \, \min \bigl\{ q \Ent{V}, \, \Ent{U_1, {\bf{V}}^q} \bigr\} \\[0.1cm]
&= \max \Bigl\{ pq \log(\delta n_1 n_2) - p(q-1) \Ent{U} - (p-1)q \Ent{V}, \nonumber \\
\label{eq1a: 26.06.2025}
& \hspace*{1.3cm} pq \log(\delta n_1 n_2) - p(q-1) \Ent{U} - (p-1) \, \Ent{U_1, {\bf{V}}^q} \Bigr\} \\[0.1cm]
&= \max \Bigl\{ pq \log(\delta n_1 n_2) - p(q-1) \Ent{U} - (p-1)q \Ent{V}, \nonumber \\
\label{eq1b: 26.06.2025}
& \hspace*{1.3cm} pq \log(\delta n_1 n_2) - p(q-1) \Ent{U} - (p-1) \, \bigl[q \log(\delta n_1 n_2) - (q-1) \, \Ent{U} \bigr] \Bigr\} \\[0.1cm]
\label{eq1c: 26.06.2025}
&= \max \Bigl\{ pq \log(\delta n_1 n_2) - p(q-1) \Ent{U} - (p-1)q \Ent{V}, \;
q \log(\delta n_1 n_2) - (q-1) \, \Ent{U} \Bigr\}.
\end{align}
The justifications for \eqref{eq4: 22.05.2025}--\eqref{eq1c: 26.06.2025} are given as follows:
\begin{itemize}
\item \eqref{eq4: 22.05.2025} holds by \eqref{eq2c: 16.03.2025};
\item \eqref{eq5: 22.05.2025} is due to \eqref{eq0c: 16.03.2025};
\item \eqref{eq6: 22.05.2025} holds by the subadditivity of the Shannon entropy and since
$V_j \sim V$ for all $j \in \OneTo{q}$,
implying
\[
\Ent{{\bf{V}}^q} \leq q \Ent{V},
\]
and also due to the chain rule and nonnegativity of the Shannon entropy for discrete random variables,
implying
\[
\Ent{{\bf{V}}^q} \leq \Ent{U_1, {\bf{V}}^q}.
\]
\item Equality \eqref{eq1a: 26.06.2025} follows by the identity $x - \min\{y,z\} = \max\{x-y, x-z\}$ for all $x,y,z \in \mathbb{R}$;
\item Equality \eqref{eq1b: 26.06.2025} holds by \eqref{eq0c: 16.03.2025};
\item Equality \eqref{eq1c: 26.06.2025} follows by simplifying the second term on the right-hand side of \eqref{eq1b: 26.06.2025}.
\end{itemize}
Consequently, combining \eqref{eq7: 16.09.2024} with \eqref{eq6: 22.05.2025}, we conclude that
\begin{align}
\card{\set{H}_1} & \geq \max \Bigl\{ (\delta n_1 n_2)^{pq} \, \exp \bigl( - p(q-1) \, \Ent{U} - q(p-1) \, \Ent{V} \bigr), \nonumber \\
\label{eq0: LB on H1}
& \hspace*{1.3cm} (\delta n_1 n_2)^q \, \exp \bigl( -(q-1) \, \Ent{U} \bigr) \Bigr\}.
\end{align}
By the definition of $\set{H}_1$ as the set of all homomorphisms from the complete bipartite graph $\CoBG{s}{t}$ to the
bipartite graph $\Gr{G}$, where the vertices in the partite set of size $s$ in $\CoBG{s}{t}$ are mapped to the partite
set $\set{U}$ of size $n_1$ in $\Gr{G}$, and the vertices in the partite set of size $t$ in $\CoBG{s}{t}$ are mapped to
the partite set $\set{V}$ of size $n_2$ in $\Gr{G}$, we obtain, by applying a symmetry argument to the second term on
the right-hand side of \eqref{eq0: LB on H1} (interchanging $s$ with $t$ and $U$ with $V$), the additional bound
$\card{\set{H}_1} \geq (\delta n_1 n_2)^p \, \exp \bigl( -(p-1) \, \Ent{V} \bigr)$. Combining this inequality with
\eqref{eq0: LB on H1} yields the improved lower bound
\begin{align}
\card{\set{H}_1} & \geq \max \Bigl\{ (\delta n_1 n_2)^{pq} \, \exp \bigl( - p(q-1) \, \Ent{U} - q(p-1) \, \Ent{V} \bigr), \nonumber \\
\label{eq: LB on H1}
& \hspace*{1.3cm} (\delta n_1 n_2)^q \, \exp \bigl( -(q-1) \, \Ent{U} \bigr), \; (\delta n_1 n_2)^p \, \exp \bigl( -(p-1) \, \Ent{V} \bigr) \Bigr\}.
\end{align}
Similarly to the transition from \eqref{eq8: 16.09.2024} to \eqref{eq8b: 16.09.2024}, we get from \eqref{eq: LB on H1} that
\begin{align}
\card{\set{H}_2} & \geq \max \Bigl\{ (\delta n_1 n_2)^{pq} \, \exp \bigl( - q(p-1) \, \Ent{U} - p(q-1) \, \Ent{V} \bigr), \nonumber \\
\label{eq: LB on H2}
& \hspace*{1.3cm} (\delta n_1 n_2)^p \, \exp \bigl( -(p-1) \, \Ent{U} \bigr), \; (\delta n_1 n_2)^q \, \exp \bigl( -(q-1) \, \Ent{V} \bigr) \Bigr\}.
\end{align}
Hence, combining \eqref{eq8c: 16.09.2024}, \eqref{eq: LB on H1}, and \eqref{eq: LB on H2}, we obtain the lower bound
\begin{align}
\homcount{\CoBG{p}{q}}{\Gr{G}}
&\geq \max \Bigl\{ (\delta n_1 n_2)^{pq} \, \exp \bigl( - p(q-1) \, \Ent{U} - q(p-1) \, \Ent{V} \bigr), \nonumber \\
& \hspace*{1.3cm} (\delta n_1 n_2)^q \, \exp \bigl( -(q-1) \, \Ent{U} \bigr), \; (\delta n_1 n_2)^p \, \exp \bigl( -(p-1) \, \Ent{V} \bigr) \Bigr\} \nonumber \\
& \hspace*{0.1cm} + \max \Bigl\{ (\delta n_1 n_2)^{pq} \, \exp \bigl( - q(p-1) \, \Ent{U} - p(q-1) \, \Ent{V} \bigr), \nonumber \\
\label{eq1: 23.06.2025}
& \hspace*{1.3cm} (\delta n_1 n_2)^p \, \exp \bigl( -(p-1) \, \Ent{U} \bigr), \; (\delta n_1 n_2)^q \, \exp \bigl( -(q-1) \, \Ent{V} \bigr) \Bigr\}.
\end{align}
This provides a refinement and strengthening of the lower bound in the leftmost term of \eqref{eq4b: 16.09.2024}, since we have
\begin{align}
\homcount{\CoBG{p}{q}}{\Gr{G}}
&\geq (\delta n_1 n_2)^{pq} \Bigl[ \exp \bigl( - p(q-1) \, \Ent{U} - q(p-1) \, \Ent{V} \bigr) \nonumber \\
\label{eq1: loosening}
& \hspace*{2cm} + \exp \bigl( - q(p-1) \, \Ent{U} - p(q-1) \, \Ent{V} \bigr) \Bigr] \\
&\geq (\delta n_1 n_2)^{pq} \, \Bigl[ \exp\Bigl( -p(q-1) \log n_1 - q(p-1) \log n_2 \Bigr) \nonumber \\
\label{eq2: loosening}
& \hspace*{2.2cm} + \exp\Bigl( -q(p-1) \log n_1 - p(q-1) \log n_2 \Bigr) \Bigr] \\
&= \delta^{pq} \, \bigl(n_1^p n_2^q + n_1^q n_2^p \bigr) \label{eq8a: 22.06.2025} \\[0.1cm]
&= \delta^{pq} \, \homcount{\CoBG{p}{q}}{\CoBG{n_1}{n_2}}, \label{eq8b: 22.06.2025}
\end{align}
where \eqref{eq1: loosening} is a relaxed form of \eqref{eq1: 23.06.2025}, obtained by taking the first term in each
maximum on the right-hand side of~\eqref{eq1: 23.06.2025};
\eqref{eq2: loosening} follows from the upper bound on the Shannon entropy, namely, $\Ent{U} \leq \log n_1$ and
$\Ent{V} \leq \log n_2$; finally, \eqref{eq8a: 22.06.2025} validates inequality \eqref{eq4b: 16.09.2024}, while
\eqref{eq8b: 22.06.2025} validates inequality \eqref{eq4c: 16.09.2024} by relying on equality \eqref{eq1: 29.07.25}.
\end{proof}

\begin{remark}
\label{remark: specialization of the refined LB}
{\em If $\Gr{G}$ is a bipartite graph in which all vertices within each partite set have equal degree, then the lower
bound in the leftmost inequality of \eqref{eq4b: 16.09.2024} coincides with that of \eqref{eq1: loosening}. This follows
from \eqref{eq: Ent U} and \eqref{eq: Ent V}, as in this case we have $\Ent{U} = \log n_1$ and $\Ent{V} = \log n_2$.}
\end{remark}

\section{Bounds on Homomorphism Counts Between Bipartite Graphs}
\label{section: boounds on homomorphism counts between bipartite graphs}

The present section introduces upper and lower bounds on the number of homomorphisms between bipartite graphs.
It relies in part on our earlier bounds on the number of homomorphisms from a complete bipartite graph to an arbitrary
bipartite graph, as introduced in Sections~\ref{section: exact expressions and comb. bounds} and~\ref{section: IT bounds}.

\subsection{Upper bounds on homomorphism numbers}
\label{subsection: Upper bounds on homomorphism numbers}

The following upper bounds rely on the earlier findings in Section~\ref{section: exact expressions and comb. bounds},
and a reverse Sidorenko inequality in \cite{SahSSZ20}.
\begin{proposition}
\label{proposition: UB on hom(BG, BG)}
{\em Let $\Gr{F}$ be a bipartite graph with $s \geq 0$ isolated vertices. Then, using the notation in
Proposition~\ref{prop: exact counting: CoBG --> BG},
\begin{enumerate}
\item For every bipartite graph $\Gr{G}$,
\begin{align}
\label{eq1: 08.07.25}
\hspace*{-0.4cm} \frac{\homcount{\Gr{F}}{\Gr{G}}}{\card{\V{\Gr{G}}}^s} \leq \prod_{\{u,v\} \in \E{\Gr{F}}}
\Biggl( \sum_{k=1}^{d_{\Gr{F}}(u)} \sum_{\ell=1}^{d_{\Gr{F}}(v)} k! \, \ell! \, S(d_{\Gr{F}}(u),k) \, S(d_{\Gr{F}}(v),\ell)
\, \bigl[N_{k, \ell}(\Gr{G}) + N_{\ell, k}(\Gr{G}) \bigr] \Biggr)^{\frac1{d_{\Gr{F}}(u) \, d_{\Gr{F}}(v)}},
\end{align}
with equality holding in \eqref{eq1: 08.07.25} if $\Gr{F}$ is a disjoint union of isolated vertices and complete bipartite graphs.
\item In particular, for every bipartite graph $\Gr{G}$ that contains no 4-cycles,
\begin{align}
\label{eq2: 01.07.25}
\homcount{\Gr{F}}{\Gr{G}} \leq \card{\V{\Gr{G}}}^s \prod_{\{u,v\} \in \E{\Gr{F}}} \Biggl( \,
\sum_{w \in \V{\Gr{G}}} \, \bigl\{d_{\Gr{G}}(w)^{d_{\Gr{F}}(u)} + d_{\Gr{G}}(w)^{d_{\Gr{F}}(v)}
\, \bigr\} - 2 \, \card{\hspace*{-0.05cm} \E{\Gr{G}}} \Biggr)^{\frac1{d_{\Gr{F}}(u) \, d_{\Gr{F}}(v)}},
\end{align}
with equality holding in \eqref{eq2: 01.07.25} if $\Gr{F}$ is a disjoint union of isolated vertices and star graphs.
\end{enumerate}}
\end{proposition}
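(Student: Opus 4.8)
The plan is to strip the isolated vertices of $\Gr{F}$, invoke the reverse Sidorenko inequality of \cite{SahSSZ20} in its per-edge (degree-weighted) form, and then substitute the exact counting formulas of Section~\ref{section: exact expressions and comb. bounds}.

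First I would decompose $\Gr{F}$ as the disjoint union of its $s$ isolated vertices and the subgraph $\Gr{F}'$ induced on the non-isolated vertices. Deleting isolated vertices changes neither the edge set nor the degrees of the surviving vertices, so $\E{\Gr{F}'}=\E{\Gr{F}}$ and $d_{\Gr{F}'}(w)=d_{\Gr{F}}(w)$ for all $w\in\V{\Gr{F}'}$, and moreover $\Gr{F}'$ is bipartite with no isolated vertices. Applying the product identity \eqref{eq1: homomorphism numbers} to the components $\Gr{F}',\CoG{1},\ldots,\CoG{1}$ (with $s$ copies of $\CoG{1}$, and $\homcount{\CoG{1}}{\Gr{G}}=\card{\V{\Gr{G}}}$) gives $\homcount{\Gr{F}}{\Gr{G}}=\card{\V{\Gr{G}}}^{\,s}\,\homcount{\Gr{F}'}{\Gr{G}}$, so it remains to bound $\homcount{\Gr{F}'}{\Gr{G}}$.

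Next I would apply the reverse Sidorenko inequality from \cite{SahSSZ20}: for any bipartite graph $\Gr{H}$ with no isolated vertices and any graph $\Gr{G}$,
\[
\homcount{\Gr{H}}{\Gr{G}} \;\le\; \prod_{\{u,v\}\in\E{\Gr{H}}} \homcount{\CoBG{d_{\Gr{H}}(u)}{d_{\Gr{H}}(v)}}{\Gr{G}}^{\,1/(d_{\Gr{H}}(u)\, d_{\Gr{H}}(v))}.
\]
Taking $\Gr{H}=\Gr{F}'$ and using $\E{\Gr{F}'}=\E{\Gr{F}}$, $d_{\Gr{F}'}=d_{\Gr{F}}$, this yields an upper bound on $\homcount{\Gr{F}}{\Gr{G}}/\card{\V{\Gr{G}}}^{s}$ as a product over the edges $\{u,v\}$ of $\Gr{F}$ of the quantities $\homcount{\CoBG{d_{\Gr{F}}(u)}{d_{\Gr{F}}(v)}}{\Gr{G}}^{1/(d_{\Gr{F}}(u)\, d_{\Gr{F}}(v))}$. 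Part~1 then follows by substituting the exact expression for $\homcount{\CoBG{p}{q}}{\Gr{G}}$ from Proposition~\ref{prop: exact counting: CoBG --> BG} with $p=d_{\Gr{F}}(u)$ and $q=d_{\Gr{F}}(v)$. For Part~2, since $\Gr{G}$ is $\CG{4}$-free, the equality case of Proposition~\ref{proposition: LB and exact hom(CoBG,G)} gives $\homcount{\CoBG{p}{q}}{\Gr{G}}=\sum_{w\in\V{\Gr{G}}}\{d_{\Gr{G}}(w)^{p}+d_{\Gr{G}}(w)^{q}\}-2\card{\E{\Gr{G}}}$, and substituting this in place of the formula of Proposition~\ref{prop: exact counting: CoBG --> BG} yields \eqref{eq2: 01.07.25}.

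For the equality assertions I would track the single inequality used, the reverse Sidorenko bound. Both of its sides are multiplicative over the connected components of the source (the left side by \eqref{eq1: homomorphism numbers}, the right side because the edge set splits over components and degrees are unchanged within each), so it suffices to check tightness on one component. If that component is a complete bipartite graph $\CoBG{p_i}{q_i}$, each of its $p_i q_i$ edges joins a vertex of degree $q_i$ to one of degree $p_i$, so the right-hand product over this component is the product of $p_i q_i$ equal factors $\homcount{\CoBG{q_i}{p_i}}{\Gr{G}}^{1/(p_i q_i)}$, which equals $\homcount{\CoBG{q_i}{p_i}}{\Gr{G}}=\homcount{\CoBG{p_i}{q_i}}{\Gr{G}}$, matching the left-hand side; hence equality holds. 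Consequently, if $\Gr{F}$ is a disjoint union of isolated vertices and complete bipartite graphs, \eqref{eq1: 08.07.25} holds with equality, and --- since star graphs $\CoBG{1}{m}$ are in particular complete bipartite, and the passage from Part~1 to Part~2 is an identity when $\Gr{G}$ is $\CG{4}$-free --- so does \eqref{eq2: 01.07.25} for the stated configurations. The only real obstacle is to cite the reverse Sidorenko inequality of \cite{SahSSZ20} in the correct per-edge (degree-weighted, not necessarily regular) form, together with the hypothesis that $\Gr{F}'$ has no isolated vertices; once that is in hand, the remainder is bookkeeping with the identities of Section~\ref{section: preliminaries} and the exact formulas of Section~\ref{section: exact expressions and comb. bounds}.
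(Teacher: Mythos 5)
Your proposal is correct and follows essentially the same route as the paper: factor out the isolated vertices via \eqref{eq1: homomorphism numbers}, apply the reverse Sidorenko inequality of \cite{SahSSZ20} (valid here since a bipartite $\Gr{F}'$ is triangle-free), substitute the exact formula of Proposition~\ref{prop: exact counting: CoBG --> BG} for each factor, and for Part~2 use the $\CG{4}$-free equality case of Proposition~\ref{proposition: LB and exact hom(CoBG,G)}; the equality analysis via multiplicativity over components and tightness on each complete bipartite component is also the paper's argument, merely phrased as tightness of the reverse Sidorenko bound rather than via the explicit double-sum identity.
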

\begin{proof}
By \cite[Theorem~1.9]{SahSSZ20}, if $\Gr{F}$ is a triangle-free graph with no isolated vertices, and $\Gr{H}$
is any graph (possibly with loops), then
\begin{align}
\label{eq: SahSSZ20}
\homcount{\Gr{F}}{\Gr{H}} \leq \prod_{\{u, v\} \in \E{\Gr{F}}} \homcount{\CoBG{d_{\Gr{F}}(u)}{d_{\Gr{F}}(v)}}{\Gr{H}}^{\frac1{d_{\Gr{F}}(u) d_{\Gr{F}}(v)}}.
\end{align}
Since there are no homomorphisms from a non-bipartite graph into a bipartite graph $\Gr{G}$, inequality \eqref{eq: SahSSZ20} is used
to obtain an upper bound on $\homcount{\Gr{F}}{\Gr{G}}$ only in the special case where $\Gr{F}$ is bipartite; otherwise,
$\homcount{\Gr{F}}{\Gr{G}}=0$ and no bound is needed.

By \eqref{eq1: homomorphism numbers}, we may assume without any loss of generality that $s=0$.
Inequality~\eqref{eq1: 08.07.25} follows from a combination of Proposition~\ref{prop: exact counting: CoBG --> BG},
and \eqref{eq: SahSSZ20}. Suppose that $\Gr{F}$ is the disjoint union of connected components $\Gr{F}_1, \ldots, \Gr{F}_r$. In this case, not only does
\eqref{eq1: homomorphism numbers} hold, but the upper bound on the right-hand side of \eqref{eq: SahSSZ20} also equals the product of the corresponding
upper bounds on $\{\homcount{\Gr{F}_j}{\Gr{G}}\}_{j=1}^r$. Therefore, it suffices to show that \eqref{eq1: 08.07.25} holds with equality when
$\Gr{F} = \CoBG{p}{q}$ for arbitrary $p,q \in \naturals$.
In that case, the right-hand side of \eqref{eq1: 08.07.25} equals
\begin{align}
& \hspace*{-0.3cm} \prod_{\{u,v\} \in \E{\Gr{F}}}
\Biggl( \, \sum_{k=1}^{d_{\Gr{F}}(u)} \sum_{\ell=1}^{d_{\Gr{F}}(v)} k! \, \ell! \, S(d_{\Gr{F}}(u),k) \, S(d_{\Gr{F}}(v),\ell)
\, \bigl[N_{k, \ell}(\Gr{G}) + N_{\ell, k}(\Gr{G}) \bigr] \Biggr)^{\frac1{d_{\Gr{F}}(u) \, d_{\Gr{F}}(v)}} \nonumber \\[0.1cm]
\label{eq1: 19.07.2025}
&= \sum_{k=1}^p \sum_{\ell=1}^q  k! \, \ell! \, S(d_{\Gr{F}}(u),k) \, S(d_{\Gr{F}}(v),\ell)
\, \bigl[N_{k, \ell}(\Gr{G}) + N_{\ell, k}(\Gr{G}) \bigr] \\[0.1cm]
\label{eq2: 19.07.2025}
&= \homcount{\Gr{F}}{\Gr{G}},
\end{align}
where \eqref{eq1: 19.07.2025} holds since the two vertices of each of the $pq$ edges $\{u,v\} \in \E{\CoBG{p}{q}}$ have degrees~p and~$q$
(e.g., let $d_{\Gr{F}}(u)=p$ and $d_{\Gr{F}}(v)=q$, corresponding to vertices in the partite sets of size $q$ and $p$, respectively),
\eqref{eq2: 19.07.2025} holds by Proposition~\ref{prop: exact counting: CoBG --> BG}.

Inequality \eqref{eq2: 01.07.25} follows from combining \eqref{eq: SahSSZ20} with the necessary and sufficient condition
for equality in \eqref{eq1: 01.07.25} (see Proposition~\ref{proposition: LB and exact hom(CoBG,G)}). Since star graphs are
(the only) complete bipartite graphs with no 4-cycles, it follows from the proof of the sufficient condition for
equality in \eqref{eq1: 08.07.25} that inequality \eqref{eq2: 01.07.25} holds with equality if $\Gr{F}$ is a disjoint union
of isolated vertices and star graphs.
\end{proof}

\begin{remark}
\label{remark: SahSSZ20}
{\em \cite[Proposition~1.10]{SahSSZ20} demonstrates that the triangle-free condition for $\Gr{F}$ in \cite[Theorem~1.9]{SahSSZ20}
is essential for the validity of \eqref{eq: SahSSZ20}, in the sense that for every graph~$\Gr{F}$ containing a triangle, there exists a
graph~$\Gr{H}$ for which inequality~\eqref{eq: SahSSZ20} fails to hold. This interesting observation from \cite{SahSSZ20}, however, is
less relevant in the context of Proposition~\ref{proposition: UB on hom(BG, BG)} because $\Gr{F}$ is a bipartite graph, so it contains no
odd cycles.}
\end{remark}

Application of Item~2 in Proposition~\ref{proposition: UB on hom(BG, BG)}
readily yields the following upper bounds on homomorphism counts.
\begin{corollary}
\label{corollary: upper bounds on homomorphism counts}
{\em Let $\Gr{F}$ be an $r$-regular bipartite graph on $n_{\Gr{F}}$ vertices, where $r \geq 1$,
and let $\Gr{G}$ be a bipartite graph that contains no 4-cycles. Then,
\begin{align}
\label{eq1: 18.07.25}
\homcount{\Gr{F}}{\Gr{G}} \leq \Biggl( \, 2 \sum_{w \in \V{\Gr{G}}} \, d_{\Gr{G}}(w)^{r}
- 2 \, \card{\hspace*{-0.05cm} \E{\Gr{G}}} \Biggr)^{\frac{n_{\vspace*{-0.15cm} \Gr{F}}}{2r}}.
\end{align}
In particular, if $\Gr{T}$ is an $r$-regular tree on $n_{\Gr{T}}$ vertices, then
\begin{align}
\label{eq2: 18.07.25}
\homcount{\Gr{F}}{\Gr{T}} \leq \Biggl( \, 2 \sum_{w \in \V{\Gr{T}}} \, d_{\Gr{T}}(w)^{r}
- 2(n_{\Gr{T}}-1) \Biggr)^{\frac{n_{\vspace*{-0.15cm} \Gr{F}}}{2r}}.
\end{align}}
\end{corollary}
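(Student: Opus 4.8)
The plan is to derive both inequalities as a direct specialization of Item~2 of Proposition~\ref{proposition: UB on hom(BG, BG)}; no new idea is needed beyond bookkeeping. First I would note that an $r$-regular graph with $r\geq 1$ has no isolated vertices, so in the notation of Proposition~\ref{proposition: UB on hom(BG, BG)} one may take $s=0$, and the prefactor $\card{\V{\Gr{G}}}^{s}$ disappears. The source $\Gr{F}$ is bipartite by hypothesis and the target $\Gr{G}$ is $\CG{4}$-free by hypothesis, so the assumptions of Item~2 are satisfied and inequality~\eqref{eq2: 01.07.25} applies.

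Next I would substitute $d_{\Gr{F}}(u)=d_{\Gr{F}}(v)=r$ for every edge $\{u,v\}\in\E{\Gr{F}}$ into \eqref{eq2: 01.07.25}. Since $d_{\Gr{G}}(w)^{r}+d_{\Gr{G}}(w)^{r}=2\,d_{\Gr{G}}(w)^{r}$, each factor of the product over $\E{\Gr{F}}$ equals $\bigl(2\sum_{w\in\V{\Gr{G}}}d_{\Gr{G}}(w)^{r}-2\,\card{\E{\Gr{G}}}\bigr)^{1/r^{2}}$, the same value for all $\card{\E{\Gr{F}}}$ edges; hence the product collapses to this common base raised to the power $\card{\E{\Gr{F}}}/r^{2}$. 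By the handshake identity, $\sum_{v\in\V{\Gr{F}}}d_{\Gr{F}}(v)=r\,n_{\Gr{F}}=2\,\card{\E{\Gr{F}}}$, so $\card{\E{\Gr{F}}}=r\,n_{\Gr{F}}/2$ and the exponent simplifies to $\bigl(r\,n_{\Gr{F}}/2\bigr)/r^{2}=n_{\Gr{F}}/(2r)$, which is exactly \eqref{eq1: 18.07.25}.

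For the second inequality I would use that any tree $\Gr{T}$ on $n_{\Gr{T}}$ vertices is connected and acyclic, hence bipartite and $\CG{4}$-free, with exactly $\card{\E{\Gr{T}}}=n_{\Gr{T}}-1$ edges; substituting $\Gr{G}=\Gr{T}$ and $\card{\E{\Gr{T}}}=n_{\Gr{T}}-1$ into \eqref{eq1: 18.07.25} yields \eqref{eq2: 18.07.25}. I do not expect any genuine obstacle here, as the argument is a routine specialization; the only steps needing a little care are the exponent arithmetic --- combining the per-edge exponent $1/(d_{\Gr{F}}(u)\,d_{\Gr{F}}(v))=1/r^{2}$ with the edge count $\card{\E{\Gr{F}}}=r\,n_{\Gr{F}}/2$ --- and verifying that the $\CG{4}$-freeness hypothesis, which underlies the equality case of Proposition~\ref{proposition: LB and exact hom(CoBG,G)} on which \eqref{eq2: 01.07.25} rests, is available for both $\Gr{G}$ and the tree $\Gr{T}$.
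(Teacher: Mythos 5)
Your proposal is correct and follows exactly the route the paper intends: the paper derives the corollary as a direct specialization of Item~2 of Proposition~\ref{proposition: UB on hom(BG, BG)} (with $s=0$, $d_{\Gr{F}}(u)=d_{\Gr{F}}(v)=r$, and $\card{\E{\Gr{F}}}=r n_{\Gr{F}}/2$ giving the exponent $n_{\Gr{F}}/(2r)$), and then substitutes a tree, with $\card{\E{\Gr{T}}}=n_{\Gr{T}}-1$, for the target. Your exponent bookkeeping and the verification of the $\CG{4}$-free hypothesis match the paper's (implicit) argument.
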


\begin{remark}
{\em A specialization of \cite[Proposition~1.10]{SahSSZ20} to the setting where $\Gr{F}$ is a regular bipartite
graph recovers the earlier result in \cite{GalvinT04}. This special case, combined with the condition that asserts
equality in \eqref{eq1: 01.07.25}, suffice for the derivation of Corollary~\ref{corollary: upper bounds on homomorphism counts}.
It is worth noting that the derivation of the special result in \cite{GalvinT04} when $\Gr{F}$ is a regular bipartite graph,
and its extension in \cite{WangTL23} to a bipartite graph that is regular on one side, relies on Shearer's entropy lemma,
whereas the derivation of \cite[Proposition~1.10]{SahSSZ20} is algebraic.}
\end{remark}

\subsection{Lower bounds on homomorphism numbers}
\label{subsection: Lower bounds on homomorphism numbers}

This section derives lower bounds on the number of homomorphisms from one simple
bipartite graph to another.
\begin{lemma}
\label{lemma: auxiliary result}
{\em Let $\Gr{F}$ be a bipartite graph obtained by removing an edge from the complete bipartite graph $\CoBG{p}{q}$,
where $p,q \geq 2$, and let $\Gr{G}$ be a simple bipartite graph. Define the set
\begin{align}
\label{eq5: 24.07.25}
\set{D} \triangleq \Bigl\{ (u,v) \in \V{\Gr{G}} \times \V{\Gr{G}}:  \{u,v\} \not\in \E{\Gr{G}}, \; \exists \, w \in \set{N}_{\Gr{G}}(u) \; \text{such that} \;
\set{N}_{\Gr{G}}(v) \cap \set{N}_{\Gr{G}}(w) \neq \es \Bigr\},
\end{align}
where $\set{N}_{\Gr{G}}(\cdot)$ denotes the set of neighbors of the specified vertex in $\Gr{G}$.
Then, for every bipartite graph $\Gr{G}$,
\begin{align}
\label{eq: 21.07.25}
\homcount{\Gr{F}}{\Gr{G}} &\geq \homcount{\CoBG{p}{q}}{\Gr{G}} + \eta_{p,q}(\Gr{G}) \\
\label{eq2: 21.07.25}
& \geq \homcount{\CoBG{p}{q}}{\Gr{G}},
\end{align}
where
\begin{align}
\label{eq6: 24.07.25}
\eta_{p,q}(\Gr{G}) \triangleq \sum_{(u,v) \in \set{D}} \; \sum_{w \in \set{N}_{\Gr{G}}(u): \;
\set{N}_{\Gr{G}}(v) \cap \set{N}_{\Gr{G}}(w) \neq \es} \,
\bigcard{\set{N}_{\Gr{G}}(v) \cap \set{N}_{\Gr{G}}(w)}^{\, \max\{p,q\}-1}.
\end{align}
Furthermore, inequalities \eqref{eq: 21.07.25} and \eqref{eq2: 21.07.25} are tight in the following cases:
\begin{enumerate}[(1)]
\item $\homcount{\Gr{F}}{\Gr{G}} = \homcount{\CoBG{p}{q}}{\Gr{G}}$
if and only if $\eta_{p,q}(\Gr{G})=0$ (i.e., $\set{D} = \es$).
\item
Inequality \eqref{eq: 21.07.25} holds with equality whenever either $p = 2$ or $q = 2$,
regardless of $\Gr{G}$.
\end{enumerate}}
\end{lemma}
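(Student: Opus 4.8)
The plan is to realize $\Hom{\CoBG{p}{q}}{\Gr{G}}$ as a subset of $\Hom{\Gr{F}}{\Gr{G}}$ and to count the surplus explicitly. I would label $\CoBG{p}{q}$ so that its partite sets are $\{u_1,\dots,u_p\}$ and $\{v_1,\dots,v_q\}$, and take $\Gr{F}$ to be $\CoBG{p}{q}$ with the edge $\{u_1,v_1\}$ removed. Since $\Gr{F}$ up to isomorphism, $\homcount{\CoBG{p}{q}}{\Gr{G}}$, the set $\set{D}$, and $\eta_{p,q}(\Gr{G})$ (which depends on $p,q$ only through $\max\{p,q\}$) are all unchanged under interchanging $p$ and $q$, I may assume $p\ge q\ge 2$. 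Because $\Gr{F}$ is a spanning subgraph of $\CoBG{p}{q}$, a map $\phi\colon\V{\Gr{F}}\to\V{\Gr{G}}$ lies in $\Hom{\CoBG{p}{q}}{\Gr{G}}$ precisely when it lies in $\Hom{\Gr{F}}{\Gr{G}}$ and additionally $\{\phi(u_1),\phi(v_1)\}\in\E{\Gr{G}}$. Writing $\set{A}$ for the set of $\phi\in\Hom{\Gr{F}}{\Gr{G}}$ with $\{\phi(u_1),\phi(v_1)\}\notin\E{\Gr{G}}$, this yields the disjoint decomposition $\homcount{\Gr{F}}{\Gr{G}}=\homcount{\CoBG{p}{q}}{\Gr{G}}+\card{\set{A}}$, so \eqref{eq2: 21.07.25} is immediate and everything reduces to estimating $\card{\set{A}}$.

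To bound $\card{\set{A}}$ from below, I would build homomorphisms in $\set{A}$ by collapsing the second partite set onto a single vertex. Fix $(u,v)\in\set{D}$, a vertex $w\in\set{N}_{\Gr{G}}(u)$ with $\set{N}_{\Gr{G}}(v)\cap\set{N}_{\Gr{G}}(w)\ne\es$, and a tuple $(c_2,\dots,c_p)\in\bigl(\set{N}_{\Gr{G}}(v)\cap\set{N}_{\Gr{G}}(w)\bigr)^{p-1}$; define $\phi$ by $\phi(u_1)=u$, $\phi(v_1)=v$, $\phi(v_j)=w$ for $2\le j\le q$, and $\phi(u_i)=c_i$ for $2\le i\le p$. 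The edges of $\Gr{F}$ fall into three families — $\{u_1,v_j\}$ with $j\ge2$ (image $\{u,w\}$), $\{u_i,v_1\}$ with $i\ge2$ (image $\{c_i,v\}$), and $\{u_i,v_j\}$ with $i,j\ge2$ (image $\{c_i,w\}$) — and each image is an edge of $\Gr{G}$ by the stated membership conditions, so $\phi\in\Hom{\Gr{F}}{\Gr{G}}$, while $\{\phi(u_1),\phi(v_1)\}=\{u,v\}\notin\E{\Gr{G}}$ places $\phi\in\set{A}$. Distinct inputs give distinct maps, since $\phi$ determines $u=\phi(u_1)$, $v=\phi(v_1)$, $w=\phi(v_2)$ (here $q\ge2$ is used) and $c_i=\phi(u_i)$. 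Summing the number $\bigcard{\set{N}_{\Gr{G}}(v)\cap\set{N}_{\Gr{G}}(w)}^{p-1}$ of admissible tuples over all $(u,v)\in\set{D}$ and all eligible $w$, and using $p-1=\max\{p,q\}-1$, gives $\card{\set{A}}\ge\eta_{p,q}(\Gr{G})$, which is \eqref{eq: 21.07.25}.

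To prove item (1), I would note that $\eta_{p,q}(\Gr{G})$ is a sum of nonnegative terms, each at least $1$ when present, so $\eta_{p,q}(\Gr{G})=0$ iff $\set{D}=\es$, while $\homcount{\Gr{F}}{\Gr{G}}=\homcount{\CoBG{p}{q}}{\Gr{G}}$ iff $\set{A}=\es$. If $\set{D}\ne\es$ the construction above produces a member of $\set{A}$. Conversely, if $\phi\in\set{A}$, put $u=\phi(u_1)$, $v=\phi(v_1)$, $w=\phi(v_2)$; the edges $\{u_1,v_2\}$, $\{u_2,v_1\}$, $\{u_2,v_2\}$ of $\Gr{F}$ force $w\in\set{N}_{\Gr{G}}(u)$ and $\phi(u_2)\in\set{N}_{\Gr{G}}(v)\cap\set{N}_{\Gr{G}}(w)$, so $(u,v)\in\set{D}$; hence $\set{A}\ne\es$ implies $\set{D}\ne\es$, and the equivalence follows.

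For item (2), under the running assumption $p\ge q$ the hypothesis that $p=2$ or $q=2$ forces $q=2$, and I would show that then $\set{A}$ is exactly the constructed family. Indeed, for $\phi\in\set{A}$ set $u=\phi(u_1)$, $v=\phi(v_1)$, $w=\phi(v_2)$; the single edge $\{u_1,v_2\}$ forces $w\in\set{N}_{\Gr{G}}(u)$, and for each $i\ge2$ the edges $\{u_i,v_1\}$ and $\{u_i,v_2\}$ force $\phi(u_i)\in\set{N}_{\Gr{G}}(v)\cap\set{N}_{\Gr{G}}(w)$ (which is therefore nonempty, so $(u,v)\in\set{D}$), so $\phi$ is the homomorphism built from $(u,v)$, $w$, and $(c_i)=(\phi(u_i))$. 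Hence $\card{\set{A}}$ equals the size $\eta_{p,2}(\Gr{G})$ of the constructed family, and \eqref{eq: 21.07.25} is an equality. I expect the only genuine work to lie in this exhaustion step together with the bookkeeping of the construction (well-definedness as a homomorphism, injectivity of the indexing, and the exact count $\eta_{p,q}(\Gr{G})$): when $q\ge3$ there can be homomorphisms in $\set{A}$ on which $v_2,\dots,v_q$ take several distinct values, so the construction misses them and the equality argument truly needs $\min\{p,q\}=2$.
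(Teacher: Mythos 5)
Your proposal is correct and follows essentially the same route as the paper: decompose $\homcount{\Gr{F}}{\Gr{G}}$ into $\homcount{\CoBG{p}{q}}{\Gr{G}}$ plus the maps sending the deleted edge to a non-edge, lower-bound the surplus by explicitly constructing homomorphisms that collapse one partite set onto a common neighbor $w$ and map the other into $\set{N}_{\Gr{G}}(v)\cap\set{N}_{\Gr{G}}(w)$, and show this construction exhausts the surplus when $\min\{p,q\}=2$. Your symmetry reduction to $p\ge q$ with a single construction merely streamlines the paper's use of two symmetric constructions (with the per-pair maximum yielding the exponent $\max\{p,q\}-1$), and your exhaustion argument for item~(2) is, if anything, slightly cleaner.
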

\begin{proof}
The bipartite graph $\Gr{F}$ is, by assumption, a spanning subgraph of the complete bipartite graph $\CoBG{p}{q}$.
Consequently, $\Hom{\CoBG{p}{q}}{\Gr{G}} \subseteq \Hom{\Gr{F}}{\Gr{G}}$, as $\Gr{F}$ imposes fewer adjacency constraints
on the vertices of $\Gr{G}$ (in comparison to those imposed by $\CoBG{p}{q}$), which implies that
$\homcount{\Gr{F}}{\Gr{G}} \geq \homcount{\CoBG{p}{q}}{\Gr{G}}$. Let
\begin{align}
\label{eq4: 23.07.25}
\Delta(\Gr{F}, \Gr{G}) \triangleq \homcount{\Gr{F}}{\Gr{G}} - \homcount{\CoBG{p}{q}}{\Gr{G}} \geq 0,
\end{align}
let $\set{A} = \{a_1, \ldots, a_p\}$ and $\set{B} = \{b_1, \ldots, b_q\}$ be the two partite sets of $\CoBG{p}{q}$,
and let $\set{D}$ be the set as defined in \eqref{eq5: 24.07.25}.
It is claimed that each pair $(u,v) \in \set{D}$ contributes at least
\[
\sum_{w \in \set{N}_{\Gr{G}}(u): \;
\set{N}_{\Gr{G}}(v) \cap \set{N}_{\Gr{G}}(w) \neq \es} \,
\bigcard{\set{N}_{\Gr{G}}(v) \cap \set{N}_{\Gr{G}}(w)}^{\, \max\{p,q\}-1}
\]
vertex mappings $\phi \colon \V{\CoBG{p}{q}} \to \V{\Gr{G}}$ such that $\phi \in \Hom{\Gr{F}}{\Gr{G}}$ but
$\phi \not\in \Hom{\CoBG{p}{q}}{\Gr{G}}$. Indeed, fix $(u,v) \in \set{D}$ (provided that $\set{D}$ is a nonempty set)
and, without any loss of generality, suppose that the removed edge from $\CoBG{p}{q}$ is given by $\{a_1, b_1\}$.
Construct a mapping $\phi \colon \V{\CoBG{p}{q}} \to \V{\Gr{G}}$ as follows:
\begin{enumerate}[(1)]
\item Let $\bigl(\phi(a_1), \phi(b_1)\bigr) = (u,v)$.
\item Select $w \in \set{N}_{\Gr{G}}(u)$ such that $\set{N}_{\Gr{G}}(v) \cap \set{N}_{\Gr{G}}(w) \neq \es$
(such an element $w$ exists according to the definition of the set $\set{D}$ in \eqref{eq5: 24.07.25}), and let $\phi(b_\ell) = w$
for every $\ell \in \{2, \ldots, q\}$.
\item Let $\phi(a_k) \in \set{N}_{\Gr{G}}(v) \cap \set{N}_{\Gr{G}}(w)$ for all $k \in \{2, \ldots, p\}$.
\end{enumerate}
Consequently, $\{ \phi(a_1), \phi(b_1) \} = \{u,v\} \not\in \E{\Gr{G}}$, so $\phi \not\in \Hom{\CoBG{p}{q}}{\Gr{G}}$.
On the other hand,
\begin{itemize}
\item For every $\ell \in \{2, \ldots, q\}$, the edge $\{a_1, b_\ell\} \in \E{\Gr{F}}$ is mapped to $\{u,w\} \in \E{\Gr{G}}$ (by Items~(1) and~(2)).
\item For every $k \in \{2, \ldots, p\}$ and $\ell \in \OneTo{q}$, the edge $\{a_k, b_\ell\} \in \E{\Gr{F}}$ is mapped to $\{ \phi(a_k), w\} \in \E{\Gr{G}}$
(by Items~(2) and (3)).
\end{itemize}
This means that every edge $\{a_k, b_\ell\} \in \E{\CoBG{p}{q}}$, except for $\{a_1, b_1\}$, is mapped to an edge in $\Gr{G}$,
so $\phi \in \Hom{\Gr{F}}{\Gr{G}}$. By Items~(1)--(3) above, there exist
$\sum_{w \in \set{N}_{\Gr{G}}(u): \; \set{N}_{\Gr{G}}(v) \cap \set{N}_{\Gr{G}}(w) \neq \es} \,
\bigcard{\set{N}_{\Gr{G}}(v) \cap \set{N}_{\Gr{G}}(w)}^{p-1}$ such distinct mappings (they are all distinct by Item~(1)).
Analogously, for each $(u,v) \in \set{D}$, let $\phi \colon \V{\CoBG{p}{q}} \to \V{\Gr{G}}$ be defined as follows:
\begin{enumerate}[(a)]
\item Let $\bigl(\phi(a_1), \phi(b_1)\bigr) = (v,u)$.
\item Select $w \in \set{N}_{\Gr{G}}(u)$ such that $\set{N}_{\Gr{G}}(v) \cap \set{N}_{\Gr{G}}(w) \neq \es$, and let $\phi(a_k) = w$
for every $k \in \{2, \ldots, p\}$.
\item Let $\phi(b_\ell) \in \set{N}_{\Gr{G}}(v) \cap \set{N}_{\Gr{G}}(w)$ for all $\ell \in \{2, \ldots, q\}$.
\end{enumerate}
It can be verified (as above) that, each of these distinct mappings $\phi$ satisfies $\phi \in \Hom{\Gr{F}}{\Gr{G}}$ but
$\phi \not\in \Hom{\CoBG{p}{q}}{\Gr{G}}$. By Items~(a)--(c), there exist
$\sum_{w \in \set{N}_{\Gr{G}}(u): \;
\set{N}_{\Gr{G}}(v) \cap \set{N}_{\Gr{G}}(w) \neq \es} \, \bigcard{\set{N}_{\Gr{G}}(v) \cap \set{N}_{\Gr{G}}(w)}^{q-1}$ such distinct mappings.
For each $(u,v) \in \set{D}$, let us take all mappings according to either Items~(1)--(3) or Items~(a)--(b) (taking both
may cause double counting mappings $\phi \in \Hom{\Gr{F}}{\Gr{G}} \setminus \Hom{\CoBG{p}{q}}{\Gr{G}}$ as it may happen
that $(u,v), (v,u) \in \set{D}$; this holds, e.g., if $u=v$ (see \eqref{eq5: 24.07.25}).
Since, for each $(u,v) \in \set{D}$, $(\phi(a_1), \phi(b_1))$ is either $(u,v)$ or $(v,u)$, all these contributed mappings
are distinct, so they are not overcounted by taking all the pairs $(u,v)$ in $\set{D}$.
Finally, summing this lower bound over all $(u,v) \in \set{D}$ yields
\begin{align}
\Delta(\Gr{F},\Gr{G}) & \geq \max \Biggl\{ \sum_{w \in \set{N}_{\Gr{G}}(u): \, \set{N}_{\Gr{G}}(v) \cap \set{N}_{\Gr{G}}(w) \neq \es} \,
\bigcard{\set{N}_{\Gr{G}}(v) \cap \set{N}_{\Gr{G}}(w)}^{p-1}, \hspace*{-0.1cm}
\sum_{w \in \set{N}_{\Gr{G}}(u): \, \set{N}_{\Gr{G}}(v) \cap \set{N}_{\Gr{G}}(w) \neq \es} \,
\bigcard{\set{N}_{\Gr{G}}(v) \cap \set{N}_{\Gr{G}}(w)}^{q-1} \Biggr\} \nonumber \\
\label{eq1: 25.07.25}
&= \sum_{(u,v) \in \set{D}} \; \sum_{w \in \set{N}_{\Gr{G}}(u): \;
\set{N}_{\Gr{G}}(v) \cap \set{N}_{\Gr{G}}(w) \neq \es} \,
\bigcard{\set{N}_{\Gr{G}}(v) \cap \set{N}_{\Gr{G}}(w)}^{\, \max\{p,q\}-1} \\
\label{eq2: 25.07.25}
&= \eta_{p,q}(\Gr{G}),
\end{align}
where \eqref{eq1: 25.07.25} holds because the two sums in the preceding line involve positive powers ($p-1$ and $q-1$) of natural numbers,
and their maximum is attained by taking the larger of the two exponents (i.e., $\max\{p,q\}-1$), and \eqref{eq2: 25.07.25} is \eqref{eq6: 24.07.25}.
Combining \eqref{eq4: 23.07.25} and \eqref{eq2: 25.6.2025} gives \eqref{eq: 21.07.25}. It is evident from
\eqref{eq6: 24.07.25} that $\eta_{p,q}(\Gr{G}) \geq 0$, which yields \eqref{eq2: 21.07.25}.

Let $p,q \geq 2$. We now prove that the equality $\homcount{\Gr{F}}{\Gr{G}} = \homcount{\CoBG{p}{q}}{\Gr{G}}$ holds
if and only if $\eta_{p,q}(\Gr{G})=0$, which, by \eqref{eq5: 24.07.25} and \eqref{eq6: 24.07.25}, is equivalent to
the condition $\set{D} = \es$. Before proceeding, it is worth noting that if either $p$ or $q$ equals~1, then $\CoBG{p}{q}$
is a star graph, in which case $\Gr{F}$ contains an isolated edge resulting from the removal of an edge from $\CoBG{p}{q}$,
which implies that $\homcount{\Gr{F}}{\Gr{G}} > \homcount{\CoBG{p}{q}}{\Gr{G}}$, and thus the required equality cannot hold.
By \eqref{eq: 21.07.25} and \eqref{eq2: 21.07.25}, if the equality $\homcount{\Gr{F}}{\Gr{G}} = \homcount{\CoBG{p}{q}}{\Gr{G}}$
holds, then $\eta_{p,q}(\Gr{G})=0$. Conversely, suppose that $\eta_{p,q}(\Gr{G})=0$. By~\eqref{eq6: 24.07.25},
for all $u, v \in \V{\Gr{G}}$ such that $\{u, v\} \notin \E{\Gr{G}}$, every neighbor of $u$, say $w \in \set{N}_{\Gr{G}}(u)$,
has no common neighbor with $v$. Suppose, for contradiction, that $\homcount{\Gr{F}}{\Gr{G}} > \homcount{\CoBG{p}{q}}{\Gr{G}}$,
which means that there exists a mapping $\phi \colon \V{\Gr{F}} \to \V{\Gr{G}}$ such that
$\phi \in \Hom{\Gr{F}}{\Gr{G}} \setminus \Hom{\CoBG{p}{q}}{\Gr{G}}$.
Let $\{a_1, b_1\}$ be the single edge removed from $\CoBG{p}{q}$ to obtain $\Gr{F}$. Let $\set{A} = \{a_1, \ldots, a_p\}$
and $\set{B} = \{b_1, \ldots, b_q\}$ be the partite sets of $\CoBG{p}{q}$.
Then, $\{\phi(a_1), \phi(b_1)\} \not\in \E{\Gr{G}}$ but $\{\phi(a), \phi(b)\} \in \E{\Gr{G}}$ for every
$\{a,b\} \in \E{\Gr{\CoBG{p}{q}}} \setminus \{a_1, b_1\}$.
Define $u \triangleq \phi(a_1)$ and $v \triangleq \phi(b_1)$, so that $\{u,v\} \not\in \E{\Gr{G}}$. Since $\{a_1, b_2\} \in \E{\Gr{F}}$, we have
$w \triangleq \phi(b_2) \in \set{N}_{\Gr{G}}(u)$. To conclude, we have $\{a_2, b_1\}, \{a_2, b_2\} \in \E{\Gr{F}}$, $\phi(b_1) = v$,
$\phi(b_2) = w$, $\phi \in \Hom{\Gr{F}}{\Gr{G}}$, so $\phi(a_2) \in \set{N}_{\Gr{G}}(v) \cap \set{N}_{\Gr{G}}(w)$, which leads to a contradiction.

We finally show that inequality \eqref{eq: 21.07.25} holds with equality whenever either $p = 2$ or $q = 2$,
regardless of the bipartite graph $\Gr{G}$. Suppose that $p=2$ and $q \geq 2$ (recall that $p,q \geq 2$ by assumption). Let
$\set{A} = \{a_1, a_2\}$ and $\set{B} = \{b_1, \ldots, b_q\}$ be the two partite sets of $\CoBG{2}{q}$, and let
$\phi \in \Hom{\CoBG{p}{q}}{\Gr{G}}$, with $(\phi(a_1), \phi(a_2)) = (u,v) \in \V{\Gr{G}} \times \V{\Gr{G}}$.
Since each $b_j$ is adjacent in $\CoBG{p}{q}$ to both $a_1$ and $a_2$, its image $\phi(b_j)$ must be adjacent in $\Gr{G}$ to both
$u$ and $v$, it can be mapped independently to any vertex $\set{N}_{\Gr{G}}(u) \cap \set{N}_{\Gr{G}}(v)$, so
\begin{align}
\label{eq2: 27.07.25}
\homcount{\CoBG{2}{q}}{\Gr{G}} = \sum_{u \in \V{\Gr{G}}} \sum_{v \in \V{\Gr{G}}} \bigcard{\set{N}_{\Gr{G}}(u) \cap \set{N}_{\Gr{G}}(v)}^q.
\end{align}
Let $\Gr{F}$ be obtained by deleting a single edge from the complete bipartite graph $\CoBG{2}{q}$, and without any loss
of generality suppose that the edge $\{a_1, b_1\}$ is deleted, so it does not belong to $\Gr{F}$. Consider any mapping
$\phi \in \Hom{\Gr{F}}{\Gr{G}} \setminus \Hom{\Gr{\CoBG{p}{q}}}{\Gr{G}}$, which means $\{\phi(a_1), \phi(b_1)\} \not\in \E{\Gr{G}}$,
while $\{\phi(a_i), \phi(b_j)\} \in \E{\Gr{G}}$ for all $(i,j) \in (\OneTo{2} \times \OneTo{q}) \setminus \{(1,1)\}$.
Let $(\phi(a_1), \phi(b_1)) \triangleq (v,u) \in \V{\Gr{G}} \times \V{\Gr{G}}$, so $\{u,v\} \not\in \E{\Gr{G}}$, and
let $\phi(a_2)=w \in \set{N}_{\Gr{G}}(u)$. For such a mapping $\phi \colon \V{\CoBG{p}{q}} \to \V{\Gr{G}}$, it is
necessary and sufficient that
\begin{enumerate}[1.]
\item $\{v,w\} \in \E{\Gr{G}}$,
\item $\phi(b_j) \in \set{N}_{\Gr{G}}(v) \cap \set{N}_{\Gr{G}}(w)$ for all $j \in \{2, \ldots, q\}$.
\end{enumerate}
This characterization of all mappings $\phi \in \Hom{\Gr{F}}{\Gr{G}} \setminus \Hom{\Gr{\CoBG{p}{q}}}{\Gr{G}}$ gives
\begin{align}
\label{eq3: 27.07.25}
\bigcard{\Hom{\Gr{F}}{\Gr{G}} \setminus \Hom{\Gr{\CoBG{p}{q}}}{\Gr{G}}} &=
\sum_{(u,v) \in \set{D}} \; \sum_{w \in \set{N}_{\Gr{G}}(u): \;
\set{N}_{\Gr{G}}(v) \cap \set{N}_{\Gr{G}}(w) \neq \es} \,
\bigcard{\set{N}_{\Gr{G}}(v) \cap \set{N}_{\Gr{G}}(w)}^{\, \max\{p,q\}-1} \\
&= \eta_{p,q}(\Gr{G}),
\end{align}
with the set $\set{D}$ as introduced in \eqref{eq5: 24.07.25}. Therefore,
\begin{align}
\label{eq4: 27.07.25}
\homcount{\Gr{F}}{\Gr{G}} = \homcount{\CoBG{2}{q}}{\Gr{G}} + \eta_{p,q}(\Gr{G}).
\end{align}
This confirms that, for every bipartite graph $\Gr{G}$, inequality \eqref{eq: 21.07.25} holds with equality
whenever $\Gr{F}$ is obtained by removing a single edge from $\CoBG{p}{q}$, provided that $p = 2$ or $q = 2$.
\end{proof}

\begin{remark}
\label{remark: star graph}
{\em The remaining case in Lemma~\ref{lemma: auxiliary result}, where either $p=1$ or $q=1$, is easy, and the quantity
$\homcount{\Gr{F}}{\Gr{G}}$ admits a closed-form expression. Suppose that $p=1$ and $q \geq 1$. Then, $\Gr{F}$ is a
disjoint union of a star graph $\StarG{q} = \CoBG{1}{q-1}$ and an isolated vertex. In this case, we have
\begin{align}
\label{eq3: 25.07.25}
\homcount{\Gr{F}}{\Gr{G}} &= \homcount{\CoG{1}}{\Gr{G}} \; \homcount{\StarG{q}}{\Gr{G}} \\
\label{eq4: 25.07.25}
&= \card{\V{\Gr{G}}} \, \sum_{v \in \V{\Gr{G}}} d_{\Gr{G}}(v)^{q-1}
\end{align}
where \eqref{eq3: 25.07.25} follows from \eqref{eq1: homomorphism numbers}, and \eqref{eq4: 25.07.25} follows from
\eqref{eq6: 09.07.25} with $m = q - 1$.}
\end{remark}

Let $\Gr{F}$ be a spanning subgraph of the complete bipartite graph $\CoBG{p}{q}$, where $p,q \geq 2$,
and let $\Gr{G}$ be a simple bipartite graph. Let $\ell = pq - \card{\E{\Gr{F}}}$ be the number of edges that are removed from
$\CoBG{p}{q}$ to obtain $\Gr{F}$. Define a sequence of spanning bipartite subgraphs $\{\Gr{F}_k\}_{k=0}^{\ell}$ such that
\begin{itemize}
\item $\Gr{F}_0 = \CoBG{p}{q}$,
\item For each $k \in \OneTo{\ell}$, $\Gr{F}_k$ is obtained from $\Gr{F}_{k-1}$ by removing a single edge.
\item $\Gr{F}_\ell = \Gr{F}$.
\end{itemize}
The order in which the edges are removed in the transition from $\CoBG{p}{q}$ to $\Gr{F}$ is arbitrary.
Let $\{\Delta_k\}_{k=1}^{\ell}$ be the sequence defined as
\begin{align}
\label{eq: Delta_k}
\Delta_k \triangleq \homcount{\Gr{F}_k}{\Gr{G}} - \homcount{\Gr{F}_{k-1}}{\Gr{G}}, \quad k \in \OneTo{\ell},
\end{align}
so that $\Delta_k$ represents the nonnegative change in the number of homomorphisms to $\Gr{G}$, resulting from the removal
of a single edge in the transition from $\Gr{F}_{k-1}$ to $\Gr{F}_k$.
\begin{lemma}
\label{lemma: LB on Delta_k}
{\em The sequence $\{\Delta_k\}_{k=1}^{\ell}$ satisfies
\begin{align}
\label{eq: LB on Delta_k}
\Delta_k \geq \eta_{p,q}(\Gr{G}),  \quad \forall \, k \in \OneTo{\ell},
\end{align}
where $\eta_{p,q}(\Gr{G})$ is the nonnegative integer defined in \eqref{eq6: 24.07.25}.}
\end{lemma}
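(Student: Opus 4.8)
The plan is to obtain \eqref{eq: LB on Delta_k} by re-running, one edge-removal at a time, the construction already carried out in the proof of Lemma~\ref{lemma: auxiliary result}; the bound will hold for each $k \in \OneTo{\ell}$ separately, with no induction needed. Fix $k \in \OneTo{\ell}$ and let $e = \{a_1,b_1\}$ be the single edge deleted in passing from $\Gr{F}_{k-1}$ to $\Gr{F}_k$, where $\set{A} = \{a_1, \ldots, a_p\}$ and $\set{B} = \{b_1, \ldots, b_q\}$ are the partite sets of $\CoBG{p}{q}$; the labeling is without loss of generality, since neither the set $\set{D}$ in \eqref{eq5: 24.07.25} nor the integer $\eta_{p,q}(\Gr{G})$ in \eqref{eq6: 24.07.25} depends on which edge is removed, so the resulting bound is uniform in $k$. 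Because $\Gr{F}_k$ is obtained from $\Gr{F}_{k-1}$ by deleting $e$, it imposes strictly fewer adjacency constraints, whence $\Hom{\Gr{F}_{k-1}}{\Gr{G}} \subseteq \Hom{\Gr{F}_k}{\Gr{G}}$ and $\Delta_k = \bigcard{\Hom{\Gr{F}_k}{\Gr{G}} \setminus \Hom{\Gr{F}_{k-1}}{\Gr{G}}} \geq 0$. It therefore suffices to exhibit at least $\eta_{p,q}(\Gr{G})$ distinct mappings $\phi \colon \V{\CoBG{p}{q}} \to \V{\Gr{G}}$ in $\Hom{\Gr{F}_k}{\Gr{G}} \setminus \Hom{\Gr{F}_{k-1}}{\Gr{G}}$.

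To this end I would reuse verbatim the two families of mappings built in the proof of Lemma~\ref{lemma: auxiliary result}: for every pair $(u,v) \in \set{D}$, the mappings of Items~(1)--(3) there (with $(\phi(a_1),\phi(b_1)) = (u,v)$) together with those of Items~(a)--(c) there (with $(\phi(a_1),\phi(b_1)) = (v,u)$). The key observation is that each such $\phi$ was shown to map every edge of $\CoBG{p}{q}$ other than $e$ to an edge of $\Gr{G}$, i.e., $\phi \in \Hom{\CoBG{p}{q} \setminus e}{\Gr{G}}$, while $\{\phi(a_1),\phi(b_1)\} = \{u,v\} \notin \E{\Gr{G}}$. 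Since $\Gr{F}_k$ is a spanning subgraph of $\CoBG{p}{q} \setminus e$, it follows at once that $\phi \in \Hom{\Gr{F}_k}{\Gr{G}}$; and since $e \in \E{\Gr{F}_{k-1}}$ is an edge violated by $\phi$, we get $\phi \notin \Hom{\Gr{F}_{k-1}}{\Gr{G}}$. Distinctness of these mappings over all $(u,v) \in \set{D}$ — and the point that for each $(u,v)$ one retains the larger of the two families, which correctly handles the coincidences $u=v$ or $(u,v),(v,u)\in\set{D}$ — is exactly as argued in Lemma~\ref{lemma: auxiliary result}, giving a count of precisely $\eta_{p,q}(\Gr{G})$ as in \eqref{eq6: 24.07.25}. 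Hence $\Delta_k \geq \eta_{p,q}(\Gr{G})$, and nonnegativity of $\eta_{p,q}(\Gr{G})$ is immediate from \eqref{eq6: 24.07.25}.

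I do not expect a genuine obstacle: the statement is essentially Lemma~\ref{lemma: auxiliary result} applied at a single, but arbitrary, stage of the edge-removal process, and the only points requiring care are routine. First, one must check that the mappings produced in Lemma~\ref{lemma: auxiliary result} remain valid homomorphisms of the possibly sparser graph $\Gr{F}_k$ — this is immediate because they are already homomorphisms of the larger graph $\CoBG{p}{q} \setminus e \supseteq \Gr{F}_k$. Second, one must check that they still fail to be homomorphisms of $\Gr{F}_{k-1}$ — this holds because the deleted edge $e$ lies in $\E{\Gr{F}_{k-1}}$ and is mapped to a non-edge. The only mild bookkeeping is the remark, already noted above, that $\eta_{p,q}(\Gr{G})$ does not depend on which edge is deleted at step $k$, so \eqref{eq: LB on Delta_k} holds for all $k \in \OneTo{\ell}$ with the same right-hand side.
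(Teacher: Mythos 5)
Your proposal is correct and takes essentially the same route as the paper: the paper proves the inclusion $\Hom{\widehat{\Gr{F}}_k}{\Gr{G}} \setminus \Hom{\CoBG{p}{q}}{\Gr{G}} \subseteq \Hom{\Gr{F}_k}{\Gr{G}} \setminus \Hom{\Gr{F}_{k-1}}{\Gr{G}}$ (with $\widehat{\Gr{F}}_k$ being $\CoBG{p}{q}$ minus the single edge $e_k$) and then cites Lemma~\ref{lemma: auxiliary result}, whereas you re-run that lemma's explicit construction, but both arguments rest on the same two observations — the witnessing maps are homomorphisms of the sparser graph $\Gr{F}_k$ and violate the deleted edge $e_k \in \E{\Gr{F}_{k-1}}$.
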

\begin{proof}
Let $\set{A} = \{a_1, \ldots, a_p\}$ and $\set{B} = \{b_1, \ldots, b_q\}$ denote the two partite sets of $\CoBG{p}{q}$.
For each $k \in \OneTo{\ell}$, let $e_k = \{a_{i_k}, b_{j_k}\} \in \E{\CoBG{p}{q}}$ be the removed edge in the transition from $\Gr{F}_{k-1}$
to its spanning subgraph $\Gr{F}_k$, and let $\widehat{\Gr{F}}_k$ be the spanning graph of $\CoBG{p}{q}$ obtained from the
latter by removing the single edge $e_k$. Let $\phi \colon \V{\CoBG{p}{q}} \to \V{\Gr{G}}$ satisfy
$\phi \in \Hom{\widehat{\Gr{F}}_k}{\Gr{G}} \setminus \Hom{\CoBG{p}{q}}{\Gr{G}}$, provided that such a vertex mapping exists.
By definition, $\{\phi(a_{i_k}), \phi(b_{j_k})\} \not\in \E{\Gr{G}}$, but for all other edges $\{a_i, b_j\} \in \E{\CoBG{p}{q}}$
where $(i,j) \neq (i_k, j_k)$ we have $\{\phi(a_i), \phi(b_j)\} \in \E{\Gr{G}}$. In particular, the images of the edges of
the spanning subgraph $\Gr{F}_k$ are edges in $\Gr{G}$ since (by construction) $e_k \not\in \E{\Gr{F}_k}$. This means that
$\phi \in \Hom{\Gr{F}_k}{\Gr{G}} \setminus \Hom{\Gr{F}_{k-1}}{\Gr{G}}$. Consequently,
\begin{align}
\label{eq1: 26.07.25}
\Hom{\widehat{\Gr{F}}_k}{\Gr{G}} \setminus \Hom{\CoBG{p}{q}}{\Gr{G}} \, \subseteq \, \Hom{\Gr{F}_k}{\Gr{G}} \setminus \Hom{\Gr{F}_{k-1}}{\Gr{G}},
\end{align}
which yields
\begin{align}
\label{eq2: 26.07.25}
\Delta_k &= \homcount{\Gr{F}_k}{\Gr{G}} - \homcount{\Gr{F}_{k-1}}{\Gr{G}} \\
\label{eq3: 26.07.25}
&\geq \homcount{\widehat{\Gr{F}}_k}{\Gr{G}} - \homcount{\CoBG{p}{q}}{\Gr{G}} \\
\label{eq4: 26.07.25}
&\geq \eta_{p,q}(\Gr{G}),
\end{align}
where \eqref{eq2: 26.07.25} is \eqref{eq: Delta_k}; \eqref{eq3: 26.07.25} holds by \eqref{eq1: 26.07.25},
and \eqref{eq4: 26.07.25} holds by Lemma~\ref{lemma: auxiliary result} and by the construction of $\widetilde{\Gr{F}}_k$
as the spanning subgraph of $\CoBG{p}{q}$ that is obtained by removing the single edge $e_k$.
\end{proof}

\begin{corollary}
\label{corollary: LB on homcount(F,G)}
{\em Let $\Gr{F}$ be a spanning subgraph of the complete bipartite graph $\CoBG{p}{q}$, where $p,q \in \naturals$,
and let $\Gr{G}$ be a simple bipartite graph.
\begin{enumerate}
\item If $p, q \geq 2$, then
\begin{align}
\label{eq5: 26.07.25}
\homcount{\Gr{F}}{\Gr{G}} \geq \homcount{\CoBG{p}{q}}{\Gr{G}} + (pq - \card{\E{\Gr{F}}}) \, \eta_{p,q}(\Gr{G}).
\end{align}
\item Else, if $p=1$ or $q=1$, then
\begin{align}
\label{eq1: 27.07.25}
\homcount{\Gr{F}}{\Gr{G}} = \card{\V{\Gr{G}}}^{pq-\card{\E{\Gr{F}}}} \, \sum_{v \in \V{\Gr{G}}} d_{\Gr{G}}(v)^{\card{\E{\Gr{F}}}}.
\end{align}
\end{enumerate}}
\end{corollary}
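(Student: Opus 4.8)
The plan is to derive Part~1 from a telescoping identity along the edge-removal sequence $\{\Gr{F}_k\}_{k=0}^{\ell}$ defined above (with $\ell \triangleq pq - \card{\E{\Gr{F}}}$, $\Gr{F}_0 = \CoBG{p}{q}$, $\Gr{F}_\ell = \Gr{F}$), and to obtain Part~2 by a direct structural decomposition of a spanning subgraph of a star, followed by the product identity~\eqref{eq1: homomorphism numbers}.

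For Part~1, assume $p,q \geq 2$. First I would write
\[
\homcount{\Gr{F}}{\Gr{G}} - \homcount{\CoBG{p}{q}}{\Gr{G}}
= \sum_{k=1}^{\ell} \bigl( \homcount{\Gr{F}_k}{\Gr{G}} - \homcount{\Gr{F}_{k-1}}{\Gr{G}} \bigr)
= \sum_{k=1}^{\ell} \Delta_k ,
\]
which is a valid telescoping sum since the intermediate terms cancel and $\Gr{F}_0 = \CoBG{p}{q}$, $\Gr{F}_\ell = \Gr{F}$. Then I would invoke Lemma~\ref{lemma: LB on Delta_k}, which gives $\Delta_k \geq \eta_{p,q}(\Gr{G})$ for every $k \in \OneTo{\ell}$, to conclude
\[
\homcount{\Gr{F}}{\Gr{G}} - \homcount{\CoBG{p}{q}}{\Gr{G}} \geq \ell \, \eta_{p,q}(\Gr{G}) = (pq - \card{\E{\Gr{F}}}) \, \eta_{p,q}(\Gr{G}) ,
\]
which is exactly~\eqref{eq5: 26.07.25}. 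I would note that $\ell$, and hence the bound, is independent of the order in which the edges are deleted, since $\ell$ is determined solely by $\card{\E{\Gr{F}}}$; the degenerate case $\ell = 0$ (i.e.\ $\Gr{F} = \CoBG{p}{q}$) makes the sum empty and the inequality an equality.

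For Part~2, suppose $p = 1$ (the case $q = 1$ is identical since $\CoBG{p}{1} \cong \CoBG{1}{p}$). Here $\CoBG{1}{q}$ is the star on $q+1$ vertices, a center adjacent to $q$ leaves, and a spanning subgraph $\Gr{F}$ of it is obtained by deleting $\ell \triangleq q - \card{\E{\Gr{F}}} = pq - \card{\E{\Gr{F}}}$ of these edges. Each deletion merely detaches one leaf, so $\Gr{F}$ is the disjoint union of the star $\CoBG{1}{\card{\E{\Gr{F}}}}$ and $\ell$ isolated vertices. Applying~\eqref{eq1: homomorphism numbers} (when $\ell \geq 1$; the case $\ell = 0$ is covered directly by~\eqref{eq6: 09.07.25}), together with $\homcount{\CoG{1}}{\Gr{G}} = \card{\V{\Gr{G}}}$ and $\homcount{\CoBG{1}{m}}{\Gr{G}} = \sum_{v \in \V{\Gr{G}}} d_{\Gr{G}}(v)^m$ from~\eqref{eq6: 09.07.25}, yields
\[
\homcount{\Gr{F}}{\Gr{G}}
= \homcount{\CoBG{1}{\card{\E{\Gr{F}}}}}{\Gr{G}} \cdot \homcount{\CoG{1}}{\Gr{G}}^{\ell}
= \Biggl( \sum_{v \in \V{\Gr{G}}} d_{\Gr{G}}(v)^{\card{\E{\Gr{F}}}} \Biggr) \card{\V{\Gr{G}}}^{\, pq - \card{\E{\Gr{F}}}} ,
\]
which is~\eqref{eq1: 27.07.25}.

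I expect no genuine obstacle here: the entire combinatorial content already lives in Lemmas~\ref{lemma: auxiliary result} and~\ref{lemma: LB on Delta_k} (in particular in the per-edge lower bound $\Delta_k \geq \eta_{p,q}(\Gr{G})$), and the corollary is a clean consequence. The only points requiring a little care are purely bookkeeping: verifying that the telescoping sum has exactly $pq - \card{\E{\Gr{F}}}$ terms regardless of the deletion order, and, in Part~2, that deleting edges from a star only peels off isolated vertices while keeping the exponents $\card{\E{\Gr{F}}}$ and $pq - \card{\E{\Gr{F}}}$ aligned with the claimed formula.
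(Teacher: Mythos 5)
Your argument is correct and follows essentially the same route as the paper: Part~1 via the telescoping sum over the edge-removal sequence combined with the per-step bound $\Delta_k \geq \eta_{p,q}(\Gr{G})$ from Lemma~\ref{lemma: LB on Delta_k}, and Part~2 by decomposing $\Gr{F}$ as a star $\CoBG{1}{\card{\E{\Gr{F}}}}$ together with $pq-\card{\E{\Gr{F}}}$ isolated vertices and applying \eqref{eq1: homomorphism numbers} and \eqref{eq6: 09.07.25}. No substantive differences from the paper's proof.
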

\begin{proof}
Let $\{\Delta_k\}_{k=1}^{\ell}$ be the nonnegative sequence as defined in \eqref{eq: Delta_k}, while referring
to the above construction of the sequence of bipartite graphs $\{\Gr{F}_k\}_{k=0}^{\ell}$, where $\Gr{F}_0 = \CoBG{p}{q}$
and $\Gr{F}_\ell = \Gr{F}$ with $\ell = pq - \card{\E{\Gr{F}}}$ being the total number of removed edges in the
spanning subgraph $\Gr{F}$ as compared to $\CoBG{p}{q}$. Then,
\begin{align}
\label{eq6: 26.07.25}
\homcount{\Gr{F}}{\Gr{G}} &= \homcount{\CoBG{p}{q}}{\Gr{G}} + \sum_{k=1}^{\ell} \Delta_k  \\
\label{eq7: 26.07.25}
&\geq \homcount{\CoBG{p}{q}}{\Gr{G}} + (pq - \card{\E{\Gr{F}}}) \, \eta_{p,q}(\Gr{G}),
\end{align}
where \eqref{eq6: 26.07.25} holds by \eqref{eq: Delta_k}, and \eqref{eq7: 26.07.25} holds by Lemma~\ref{lemma: LB on Delta_k}.
This proves \eqref{eq5: 26.07.25}.

If either $p$ or $q$ equals~1, then $\Gr{F}$ is a disjoint union of the star graph $\CoBG{1}{m}$, where $m \triangleq \card{\E{\Gr{F}}}$,
together with $r = pq-\card{\E{\Gr{F}}}$ isolated vertices. Equality~\eqref{eq7: 26.07.25} then follows from \eqref{eq1: homomorphism numbers}
and \eqref{eq6: 09.07.25}.
\end{proof}

\begin{remark}
\label{remark: p,q, number of deleted edges}
{\em If $\Gr{F}$ is a connected bipartite graph, then due to the isomorphism between the complete bipartite graphs $\CoBG{p}{q}$ and $\CoBG{q}{p}$,
there is a unique way to associate $\Gr{F}$ with a complete bipartite graph in which it appears as a spanning subgraph. However, if $\Gr{F}$ is a
disconnected bipartite graph, then multiple nonisomorphic complete bipartite graphs may contain $\Gr{F}$ as a spanning subgraph. This raises the
question of how to efficiently choose $p, q \in \naturals$ in order to obtain a good lower bound on the quantity $\homcount{\Gr{F}}{\Gr{G}}$, where
$\Gr{F}$ and $\Gr{G}$ are given simple bipartite graphs, and the bound is given in the form of \eqref{eq5: 26.07.25}.
The selection of $p$ and $q$ can significantly influence the tightness of this lower bound. For simplicity, and to make the selection of $p$ and $q$
independent of $\Gr{G}$, we choose them so as to minimize the difference $\ell \triangleq pq - \card{\E{\Gr{F}}}$, subject to the constraint that
it remains nonnegative. This nonnegativity condition arises from the interpretation of $\ell$ as the number of edges removed from $\CoBG{p}{q}$ to
obtain the spanning subgraph $\Gr{F}$. The requirement to minimize $\ell$ is motivated by the proofs of Lemma~\ref{lemma: LB on Delta_k} and
Corollary~\ref{corollary: LB on homcount(F,G)}, where the nonnegative difference $\homcount{\Gr{F}}{\Gr{G}} - \homcount{\CoBG{p}{q}}{\Gr{G}}$ is
expressed in \eqref{eq6: 26.07.25} as a sum of $\ell$ nonnegative terms ${\Delta_k}{k=1}^{\ell}$, each of which is lower bounded by $\eta_{p,q}(\Gr{G})$
(see \eqref{eq: LB on Delta_k}).

Numerical experiments indicate that this selection strategy is both simple and effective. For example, consider the case where we begin with a star graph
$\CoBG{1}{10}$ and remove five edges to obtain a graph $\Gr{F}$ with five isolated vertices. This resulting graph is a spanning subgraph of each of the
nonisomorphic complete bipartite graphs $\CoBG{1}{10}, \CoBG{2}{9}, \ldots, \CoBG{5}{6}$, which in turn yield different lower bounds on $\homcount{\Gr{F}}{\Gr{G}}$.
Our approach selects $\CoBG{1}{10}$ in this case, leading to a lower bound that is attained with equality in \eqref{eq1: 27.07.25}.}
\end{remark}

Combining Corollary~\ref{corollary: LB on homcount(F,G)} with the earlier results in Propositions~\ref{prop: exact counting: CoBG --> BG},
\ref{proposition: LB and exact hom(CoBG,G)}, and \ref{prop.: refined IT-LB} gives the following lower bound on the
number of homomorphisms when the source and target graphs are arbitrary bipartite graphs (with no multiple edges).
\begin{proposition}
\label{proposition: LB on hom(BG, BG)}
{\em Let $\Gr{F}$ and $\Gr{G}$ be bipartite graphs, and let
\begin{enumerate}[(1)]
\item the partite sets of $\Gr{G}$ be denoted by $\set{U}$ and $\set{V}$, with respective sizes $n_1$ and $n_2$;
\item the edge density of $\Gr{G}$ be denoted by $\delta$, as defined in \eqref{eq: edge density in bipartite graph}.
\item the degree profiles of $\Gr{G}$ with respect to the partite sets be denoted by ${\bf{d}}^{(\set{U})}(\Gr{G})$
and ${\bf{d}}^{(\set{V})}(\Gr{G})$, as given in \eqref{eq1: degree profile U} and \eqref{eq1: degree profile V}, respectively;
\item $\Gr{F}$ be the disjoint union of $k_1 \geq 0$ stars, comprising in particular isolated vertices and disconnected edges, denoted by
$\CoBG{1}{d_1}, \ldots, \CoBG{1}{d_{k_1}}$, together with $k_2 \geq 0$ additional connected bipartite graphs $\Gr{B}_1, \ldots, \Gr{B}_{k_2}$;
\item for each $j \in \OneTo{k_2}$, the component $\Gr{B}_j$ in $\Gr{F}$ be a bipartite graph with partite sets of sizes $p_j$ and $q_j$,
where $p_j, q_j \geq 2$.
\end{enumerate}
Then,
\begin{align}
\homcount{\Gr{F}}{\Gr{G}} \geq \prod_{i=1}^{k_1} \Biggl( \sum_{v \in \V{\Gr{G}}} d_{\Gr{G}}(v)^{d_i} \Biggr) \cdot \prod_{j=1}^{k_2} \mu_j,
\end{align}
where the sequence $\{\mu_j\}_{j=1}^{k_2}$ of natural numbers is defined in the following way. For each $j \in \OneTo{k_2}$, let $(p,q)$ be replaced
by $(p_j, q_j)$, and define $\mu_j$ according to the following rules:
\begin{enumerate}
\item If $\Gr{B}_j = \CoBG{p_j}{q_j}$, then $\mu_j \triangleq \homcount{\Gr{B}_j}{\Gr{G}}$ is defined to be the right-hand side of
\eqref{eq: exact counting: CoBG --> BG}, provided that this expression is computationally tractable. Otherwise, $\mu_j$ is taken as the
maximum between the ceilings of the right-hand sides of \eqref{eq1: 01.07.25} or \eqref{eq3: 16.07.2025}.
\item If $\Gr{B}_j$ is a non-complete bipartite graph, then $\mu_j$ in increased by $(p_j q_j - \card{\E{\Gr{B}_j}}) \, \eta_{p_j,q_j}(\Gr{G})$
relative to the value specified in Item~1, where $\eta_{p_j,q_j}(\Gr{G})$ is the nonnegative integer defined in \eqref{eq6: 24.07.25}.
\end{enumerate}}
\end{proposition}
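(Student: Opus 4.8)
The plan is to reduce the claimed inequality to a product of componentwise bounds, each of which has already been established — either exactly or as a lower bound — in the preceding results. First I would invoke the disjoint-union identity \eqref{eq1: homomorphism numbers}: since $\Gr{F}$ is the disjoint union of the stars $\CoBG{1}{d_1}, \ldots, \CoBG{1}{d_{k_1}}$ (with $d_i = 0$ corresponding to an isolated vertex and $d_i = 1$ to a disconnected edge) together with the connected components $\Gr{B}_1, \ldots, \Gr{B}_{k_2}$, this gives
$$\homcount{\Gr{F}}{\Gr{G}} = \prod_{i=1}^{k_1} \homcount{\CoBG{1}{d_i}}{\Gr{G}} \cdot \prod_{j=1}^{k_2} \homcount{\Gr{B}_j}{\Gr{G}}.$$
For the star factors I would use the identity \eqref{eq6: 09.07.25} (equivalently \cite[Example~5.10]{Lovasz12}), by which $\homcount{\CoBG{1}{d_i}}{\Gr{G}} = \sum_{v \in \V{\Gr{G}}} d_{\Gr{G}}(v)^{d_i}$ holds with equality for every $d_i \geq 0$ (the case $d_i=0$ being the trivial count $\homcount{\CoG{1}}{\Gr{G}} = \card{\V{\Gr{G}}}$); this reproduces the first product on the right-hand side of the asserted bound.

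It then remains to show $\homcount{\Gr{B}_j}{\Gr{G}} \geq \mu_j$ for each $j \in \OneTo{k_2}$, with $(p,q)$ replaced by $(p_j,q_j)$. If $\Gr{B}_j = \CoBG{p_j}{q_j}$, then Proposition~\ref{prop: exact counting: CoBG --> BG} gives $\homcount{\Gr{B}_j}{\Gr{G}}$ exactly as the right-hand side of \eqref{eq: exact counting: CoBG --> BG}, while Proposition~\ref{proposition: LB and exact hom(CoBG,G)} and Proposition~\ref{prop.: refined IT-LB} show that this count is at least the right-hand sides of \eqref{eq1: 01.07.25} and of \eqref{eq3: 16.07.2025}, respectively. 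Since $\homcount{\Gr{B}_j}{\Gr{G}}$ is a nonnegative integer, it is at least the ceiling of each of these real-valued lower bounds, hence at least the maximum of the two ceilings; in every case $\homcount{\Gr{B}_j}{\Gr{G}} \geq \mu_j$ with $\mu_j$ as specified in Item~1. If instead $\Gr{B}_j$ is non-complete, then, being a connected bipartite graph with partite sets of sizes $p_j, q_j \geq 2$, it is a spanning subgraph of $\CoBG{p_j}{q_j}$ from which $p_j q_j - \card{\E{\Gr{B}_j}} \geq 1$ edges have been removed; Corollary~\ref{corollary: LB on homcount(F,G)}(1) then yields $\homcount{\Gr{B}_j}{\Gr{G}} \geq \homcount{\CoBG{p_j}{q_j}}{\Gr{G}} + (p_j q_j - \card{\E{\Gr{B}_j}}) \, \eta_{p_j,q_j}(\Gr{G})$, and bounding $\homcount{\CoBG{p_j}{q_j}}{\Gr{G}}$ from below by the Item~1 value exactly as above gives $\homcount{\Gr{B}_j}{\Gr{G}} \geq \mu_j$ for $\mu_j$ as defined in Item~2. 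Multiplying the star equalities and the componentwise inequalities $\homcount{\Gr{B}_j}{\Gr{G}} \geq \mu_j \geq 0$ produces the asserted bound.

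The argument is essentially an assembly of earlier results, so no genuinely hard step is anticipated; the points deserving care are (i)~the integrality observation that lets one round the analytic lower bounds \eqref{eq1: 01.07.25} and \eqref{eq3: 16.07.2025} up to their ceilings, (ii)~treating the degenerate ``star'' components — isolated vertices and disconnected edges — uniformly within the single formula $\sum_{v} d_{\Gr{G}}(v)^{d_i}$, and (iii)~checking that the hypotheses of Corollary~\ref{corollary: LB on homcount(F,G)}(1) are met, namely $p_j, q_j \geq 2$ (hypothesis~(5)) and that $\Gr{B}_j$ is indeed a spanning subgraph of $\CoBG{p_j}{q_j}$, which is automatic for a connected bipartite graph with parts of those sizes.
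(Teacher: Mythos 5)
Your proposal is correct and follows essentially the same route as the paper's own proof: factor $\homcount{\Gr{F}}{\Gr{G}}$ over the connected components via \eqref{eq1: homomorphism numbers}, evaluate the star factors exactly through \eqref{eq6: 09.07.25}, bound each $\homcount{\Gr{B}_j}{\Gr{G}}$ below by $\mu_j$ using Propositions~\ref{prop: exact counting: CoBG --> BG}, \ref{proposition: LB and exact hom(CoBG,G)}, \ref{prop.: refined IT-LB}, and Corollary~\ref{corollary: LB on homcount(F,G)} for the non-complete components, and multiply. Your added remarks on integrality (taking ceilings) and on the degenerate star components $d_i \in \{0,1\}$ are consistent refinements of the same argument.
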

\begin{proof}
By assumption, since $\Gr{F}$ is a disjoint union of the stars $\CoBG{1}{d_1}, \ldots \CoBG{1}{d_{k_1}}$ (including isolated vertices
and disconnected edges in particular), and other $k_2$ bipartite graphs $\Gr{B}_1, \ldots, \Gr{B}_{k_2}$, whose sizes of all their
partite sets are at least~2, it follows that
\begin{align}
\label{eq1: 24.07.25}
\homcount{\Gr{F}}{\Gr{G}} &= \prod_{i=1}^{k_1} \homcount{\CoBG{1}{d_i}}{\Gr{G}} \; \prod_{j=1}^{k_2} \homcount{\Gr{B}_j}{\Gr{G}} \\
\label{eq2: 24.07.25}
&=\prod_{i=1}^{k_1} \Biggl( \sum_{v \in \V{\Gr{G}}} d_{\Gr{G}}(v)^{d_i} \Biggr) \; \prod_{j=1}^{k_2} \homcount{\Gr{B}_j}{\Gr{G}},
\end{align}
where \eqref{eq1: 24.07.25} and \eqref{eq2: 24.07.25} hold by \eqref{eq6: 09.07.25} with $m = d_i$ for each $i \in \OneTo{k_1}$, respectively.
Let $\mu_j$, for each $j \in \OneTo{k}$ be a natural number that is either equal to $\homcount{\Gr{B}_j}{\Gr{G}}$ or it otherwise forms a lower
bound on this quantity. If $\Gr{B}_j$ is a complete bipartite graph then the above statement, with respect to the value of $\mu_j$, follows from
Propositions~\ref{prop: exact counting: CoBG --> BG}, \ref{proposition: LB and exact hom(CoBG,G)}, and \ref{prop.: refined IT-LB}.
Otherwise, the component bipartite graph $\Gr{B}_j$ is a spanning subgraph of $\CoBG{p_j}{q_j}$, obtained by removing
$p_j q_j - \card{\E{\Gr{B}_j}}$ edges from the latter complete bipartite graph. Consequently, by Corollary~\ref{corollary: LB on homcount(F,G)},
\begin{align}
\label{eq4: 24.07.25}
\homcount{\Gr{B}_j}{\Gr{G}} \geq \homcount{\CoBG{p}{q}}{\Gr{G}} + (p_j q_j - \card{\E{\Gr{B}_j}}) \, \eta_{p_j,q_j}(\Gr{G}),
\end{align}
Combining \eqref{eq1: 24.07.25}--\eqref{eq4: 24.07.25} completes the proof of Proposition~\ref{proposition: LB on hom(BG, BG)}.
\end{proof}

\section{Numerical Results}
\label{section: numerical results}

This section presents numerical results for the computationally tractable upper and lower bounds on the number of homomorphisms
between bipartite graphs established in Sections~\ref{section: exact expressions and comb. bounds}--\ref{section: boounds on homomorphism counts between bipartite graphs},
along with exact homomorphism counts in instances where exact computation is feasible.

We begin by examining the tightness of the proposed lower bounds on the number of homomorphisms from complete bipartite
graphs to bipartite target graphs, selected uniformly at random to have equal-sized partite sets of a given size, and
with a fixed edge density $\delta \in (0,1)$. The tightness of these lower bounds is studied as a function of the edge
density and the size of the partite sets. For each value of $\delta$, a new bipartite target graph $\Gr{G}$ is selected
uniformly at random, and independently of all random selections, for the calculation of all the lower bounds and the exact
value of the number of homomorphisms.

The compared lower bounds include those from Propositions~\ref{proposition: LB and exact hom(CoBG,G)}, \ref{prop.: IT-LB},
and~\ref{prop.: refined IT-LB}, as well as the lower bound in \eqref{eq2: 21.05.2025}, which follows from Sidorenko's inequality.
For small complete bipartite graphs as source graphs such as $\CoBG{3}{3}$, the number of homomorphisms is computed exactly using
Proposition~\ref{prop: exact counting: CoBG --> BG}. For larger source graphs such as $\CoBG{10}{10}$, exact computation becomes
intractable, and we rely solely on lower bounds (see Figures~\ref{figure 1} and~\ref{figure 2: two plots}, respectively).

The following observations are supported by the plots of Figures~\ref{figure 1} and~\ref{figure 2: two plots}:
\begin{enumerate}[1)]
\item The entropy-based lower bounds in Propositions~\ref{prop.: IT-LB} and~\ref{prop.: refined IT-LB} outperform
Sidorenko's lower bound in \eqref{eq2: 21.05.2025}. This is demonstrated in
Discussion~\ref{discussion: Comparison of the 1st IT LB to Sidorenko's lower bound}, which shows that the bound
in Proposition~\ref{prop.: IT-LB} (see the right-hand side of \eqref{eq4b: 16.09.2024}) improves that in
\eqref{eq2: 21.05.2025} when the target graph is bipartite. Consequently, the same
conclusion holds for the refined bound in Proposition~\ref{prop.: refined IT-LB}.
\item Both entropy-based lower bounds become tight as the edge density $\delta$ approaches~1. This follows
from Proposition~\ref{prop.: IT-LB}, which provides the looser of the two bounds and implies that for every
bipartite graph $\Gr{G}$ with edge density $\delta \in [0,1]$ and partite sets of sizes $n_1, n_2 \in \naturals$,
and for all $p, q \in \naturals$,
\begin{align}
\label{eq1: 04.08.25}
\delta^{pq} \, \homcount{\CoBG{p}{q}}{\CoBG{n_1}{n_2}} \leq \homcount{\CoBG{p}{q}}{\Gr{G}}
\leq \homcount{\CoBG{p}{q}}{\CoBG{n_1}{n_2}}.
\end{align}
By the sandwich theorem, inequality \eqref{eq1: 04.08.25} shows that the lower bound in Proposition~\ref{prop.: IT-LB}
becomes tight in the limit as $\delta \to 1^{-}$.
\item The lower bound in Proposition~\ref{proposition: LB and exact hom(CoBG,G)} is reasonably tight for small values of $\delta$,
as shown in Figure~\ref{figure 1}. This follows from the fact that this bound is exact when the target bipartite graph is free of
4-cycles, and the edge density of such graphs must be small. It is consistent with the upper bound on the edge density of $\CG{4}$-free
bipartite graphs, as given in Remark~\ref{remark: edge-density of C4-free bipartite graph}.
\item The horizontal segments at small values of $\delta$ in Figures~\ref{figure 1} and~\ref{figure 2: two plots} arise from applying
the ceiling function to the lower bounds, since these bounds pertain to the (integer) number of graph homomorphisms. Consequently, for
sufficiently small $\delta$, the ceiled lower bound is equal to 1, as the original lower bound (before ceiling) lies between~0 and~1.
\item Pictorially, Figures~\ref{figure 1} and~\ref{figure 2: two plots} may give the impression that the maximum of the combinatorial
lower bound in Proposition~\ref{proposition: LB and exact hom(CoBG,G)} and the first entropy-based lower bound in Proposition~\ref{prop.: IT-LB}
is greater than or equal to the refined entropy-based lower bound in Proposition~\ref{prop.: refined IT-LB}. However, this is not the case.
For instance, in the upper plot of Figure~\ref{figure 2: two plots}, throughout the range $\delta \in [0.64, 0.70]$, the refined entropy-based
lower bound in Proposition~\ref{prop.: refined IT-LB} is strictly larger than the maximum of the two other lower bounds from
Propositions~\ref{proposition: LB and exact hom(CoBG,G)} and~\ref{prop.: IT-LB}. Specifically, at $\delta = 0.64$ and $\delta = 0.70$,
the refined entropy-based lower bound in Proposition~\ref{prop.: refined IT-LB} is equal to $1.435 \cdot 10^{21}$ and $9.470 \cdot 10^{24}$,
respectively, while the corresponding maximum of the other two lower bounds is equal to $8.299 \cdot 10^{20}$ and $6.469 \cdot 10^{24}$, respectively.
\end{enumerate}

\begin{figure}[htbp]
  \centering
  \includegraphics[width=0.9\textwidth]{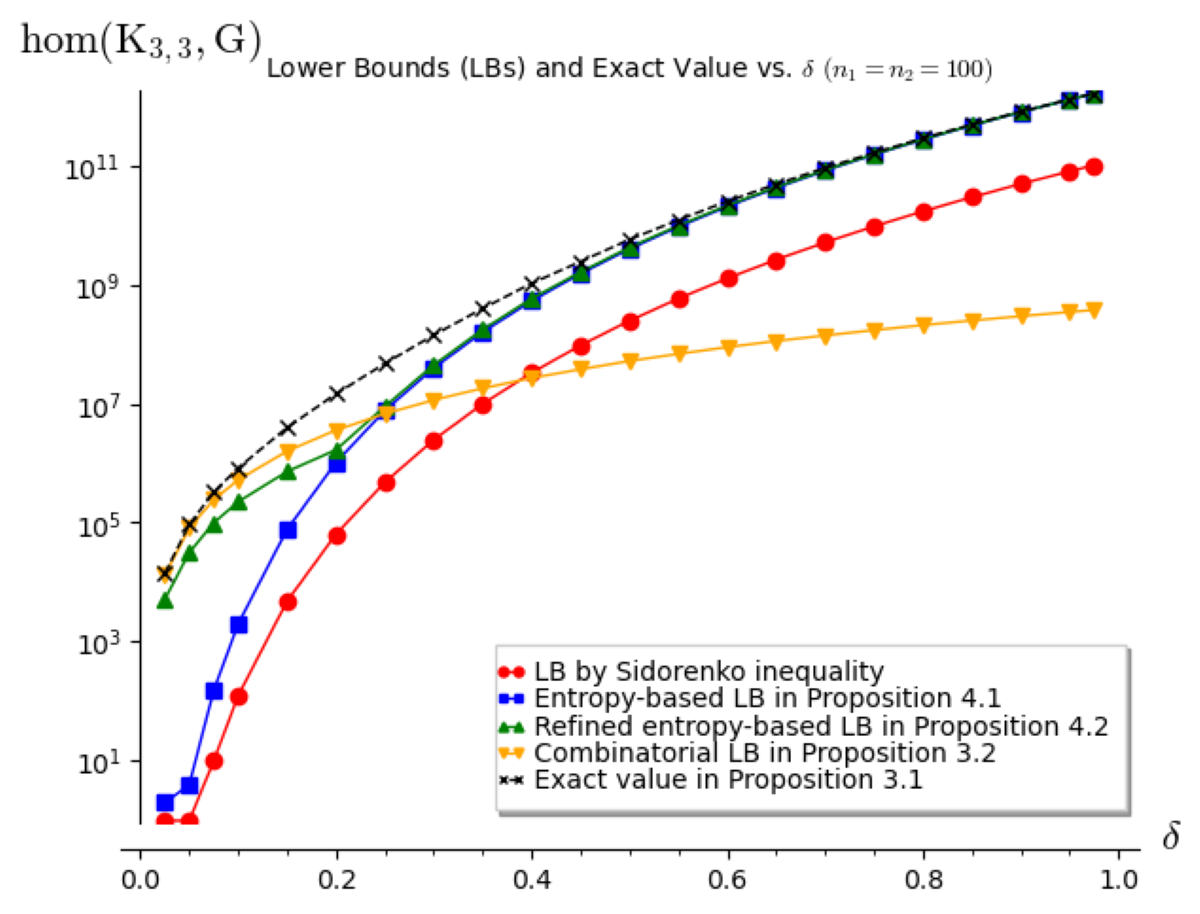}
  \caption{Exact values and lower bounds on the number of homomorphisms from the
  complete bipartite graph $\CoBG{3}{3}$ to bipartite graphs $\Gr{G}$ drawn
  uniformly at random with partite sets of sizes $n_1 = n_2 = 100$ and edge
  density $\delta \in [0,1]$. For each value of $\delta$, a new random graph
  $\Gr{G}$ is generated independently. The exact values of $\homcount{\CoBG{3}{3}}{\Gr{G}}$
  and their lower bounds are plotted as a function of $\delta$.}
  \label{figure 1}
\end{figure}

\begin{figure}[htbp]
  \centering
  \begin{subfigure}[b]{0.8\textwidth}
    \includegraphics[width=\textwidth]{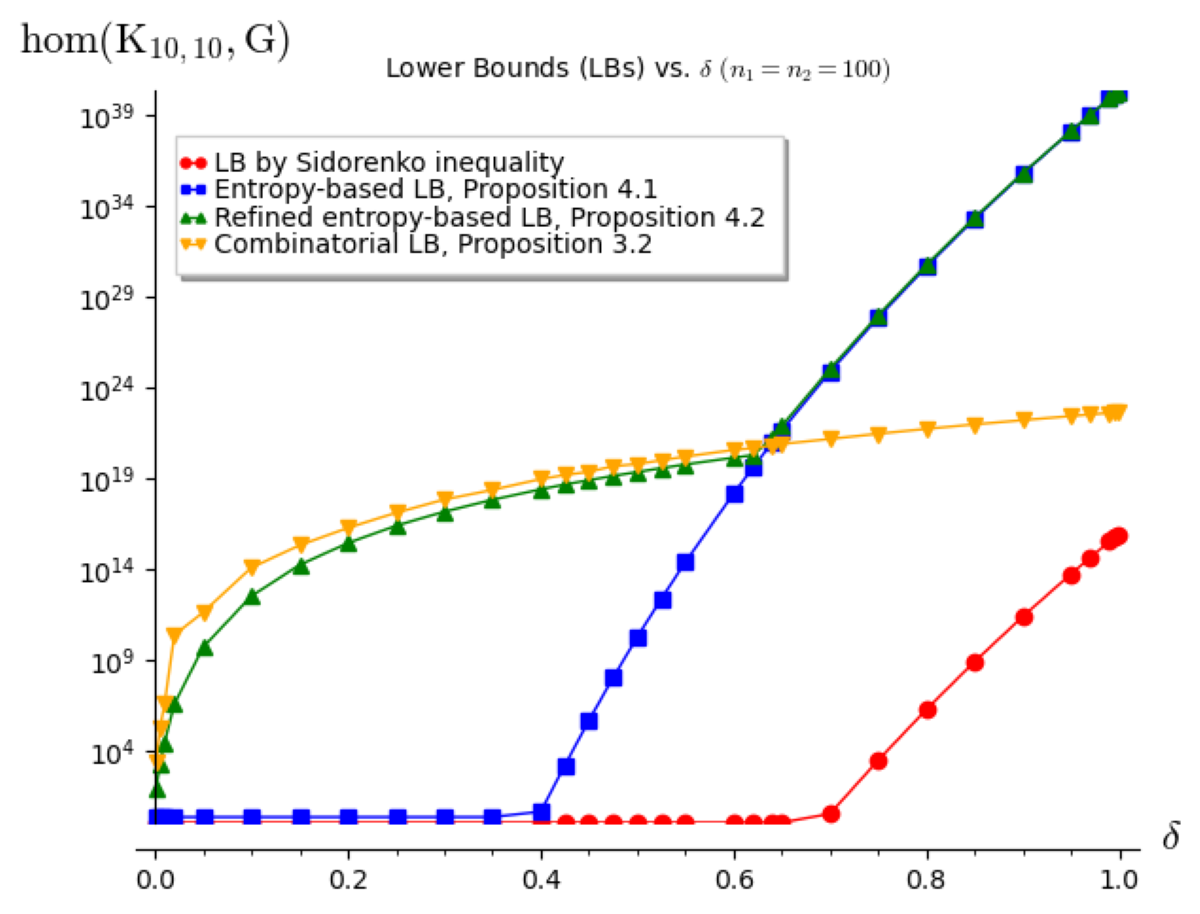}
    \label{fig:plot2a}
  \end{subfigure}
  \vskip 0.1cm
  \begin{subfigure}[b]{0.8\textwidth}
    \includegraphics[width=\textwidth]{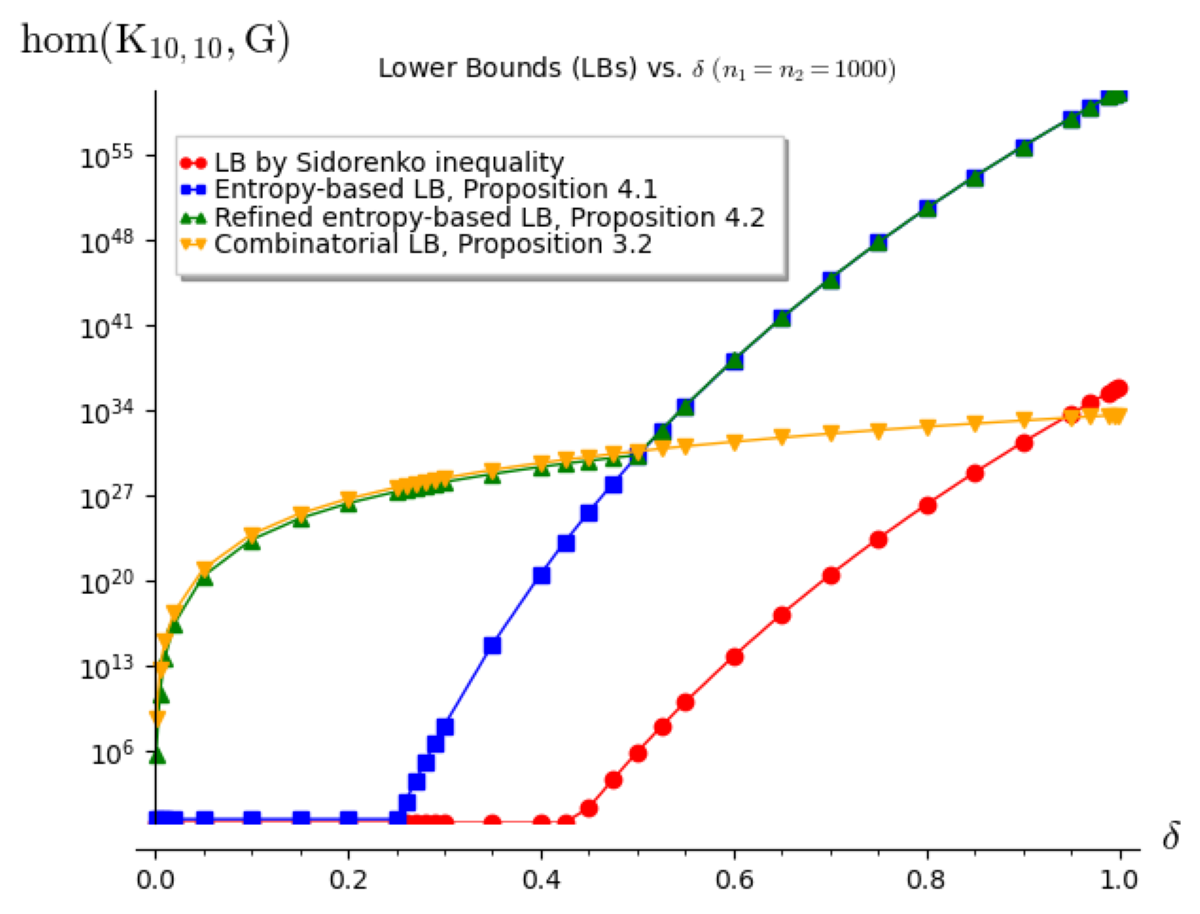}
    \label{fig:plot2b}
  \end{subfigure}
  \caption{Lower bounds on the number of homomorphisms from the complete bipartite graph
  $\CoBG{10}{10}$ to bipartite graphs $\Gr{G}$ drawn uniformly at random with partite sets
  of sizes $n_1$ and $n_2$, and edge density $\delta \in [0,1]$. The upper plot corresponds
  to $n_1 = n_2 = 100$, and the lower plot to $n_1 = n_2 = 1000$. For each value of $\delta$,
  a new random graph $\Gr{G}$ is generated independently.
  The values of the lower bounds on $\homcount{\CoBG{10}{10}}{\Gr{G}}$ are plotted as a function
  of $\delta$.}
  \label{figure 2: two plots}
\end{figure}

We proceed to examine the upper and lower bounds on the number of homomorphisms from a non-complete
bipartite graph $\Gr{F}$ to a bipartite graph $\Gr{G}$. Figure~\ref{figure3} illustrates the setting
in which the source graph $\Gr{F}$, shown in the upper plot, is fixed, while the target graph $\Gr{G}$
is selected uniformly at random, subject to prescribed partite set sizes and a specified edge density
$\delta = \delta(\Gr{G})$. In this example, the source graph $\Gr{F}$ is a spanning subgraph of
$\CoBG{4}{4}$ with 8~edges (edge density $\tfrac{1}{2}$), and the target graph $\Gr{G}$ is taken to
have equal-sized partite sets ($n_1 = n_2 = \tfrac{n}{2}$), and the edge density is fixed to either
$\tfrac{1}{4}$ or $\tfrac{3}{4}$. This figure presents, as a function of the order $n$ of the target
graph $\Gr{G}$ and for these two edge densities, the exact value of $\homcount{\Gr{F}}{\Gr{G}}$ (when
computationally feasible) alongside the proposed easy-to-compute and analytical bounds. The upper three
curves in the lower plot correspond to $\delta = \tfrac{3}{4}$, while the lower three curves correspond
to $\delta = \tfrac{1}{4}$. For each value of $n$ and $\delta$, the plotted values (exact count, upper
bound, and lower bound) are computed as averages over 100~independent random instances.

It is observed from the lower plot of Figure~\ref{figure3} that the curves corresponding to the upper and
lower bounds, and the exact homomorphism count (when feasibly computable) at an edge density of
$\Gr{G}$ equal to $\tfrac{3}{4}$ lie above the corresponding curves for an edge density of
$\tfrac{1}{4}$. This is expected for the following reason: let $\set{G}^{(\delta)}$ denote the set
of all bipartite graphs whose partite sets are of fixed sizes, $n_1, n_2 \in \naturals$, and whose edge
density equals $\delta \in [0,1]$, let $\Gr{G}$ be a graph selected uniformly at random from
$\set{G}^{(\delta)}$, and let the function $g \colon [0,1] \to \Reals^{+}$ be given by
\begin{align}
\label{eq1: 07.08.25}
g(\delta) \triangleq \Expecwrt{\Gr{G} \sim U(\set{G}^{(\delta)})}{\homcount{\Gr{F}}{\Gr{G}}}, \quad \delta \in [0,1],
\end{align}
where $\Gr{F}$ is a fixed bipartite graph, and $\Gr{G} \sim U(\set{G}^{(\delta)})$ denotes that the random
graph $\Gr{G}$ is uniformly distributed over the set $\set{G}^{(\delta)}$. Then, the function $g$ is monotonically
increasing over the interval $[0,1]$. Indeed, let $\delta_1, \delta_2 \in [0,1]$ such that $\delta_1 < \delta_2$.
A graph $\Gr{G} \in \set{G}^{(\delta_2)}$ can be obtained from a spanning subgraph in the set $\set{G}^{(\delta_1)}$,
with identical partite sets, by incorporating the additional required edges to obtain the original graph
$\Gr{G}$, while increasing the edge density from $\delta_1$ to $\delta_2$.
If a graph $\Gr{G}_1$ is a spanning subgraph of a graph $\Gr{G}_2$, then $\homcount{\Gr{F}}{\Gr{G}_1}
\leq \homcount{\Gr{F}}{\Gr{G}_2}$, which therefore implies that $g(\delta_1) \leq g(\delta_2)$, thus
establishing the monotonicity of $g$.

\begin{figure}[htbp]
  \centering
  \begin{subfigure}[b]{0.4\textwidth}
    \includegraphics[width=\textwidth]{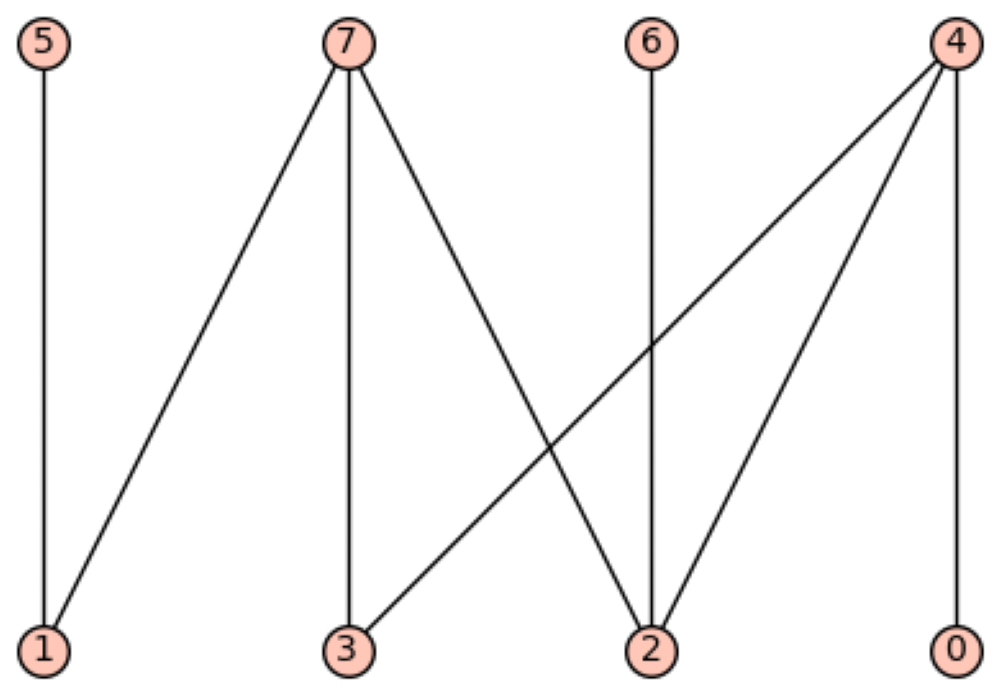}
    \label{fig:plot3_fixed_F}
  \end{subfigure}
  \vskip 0.2cm
  \begin{subfigure}[b]{1.0\textwidth}
    \includegraphics[width=\textwidth]{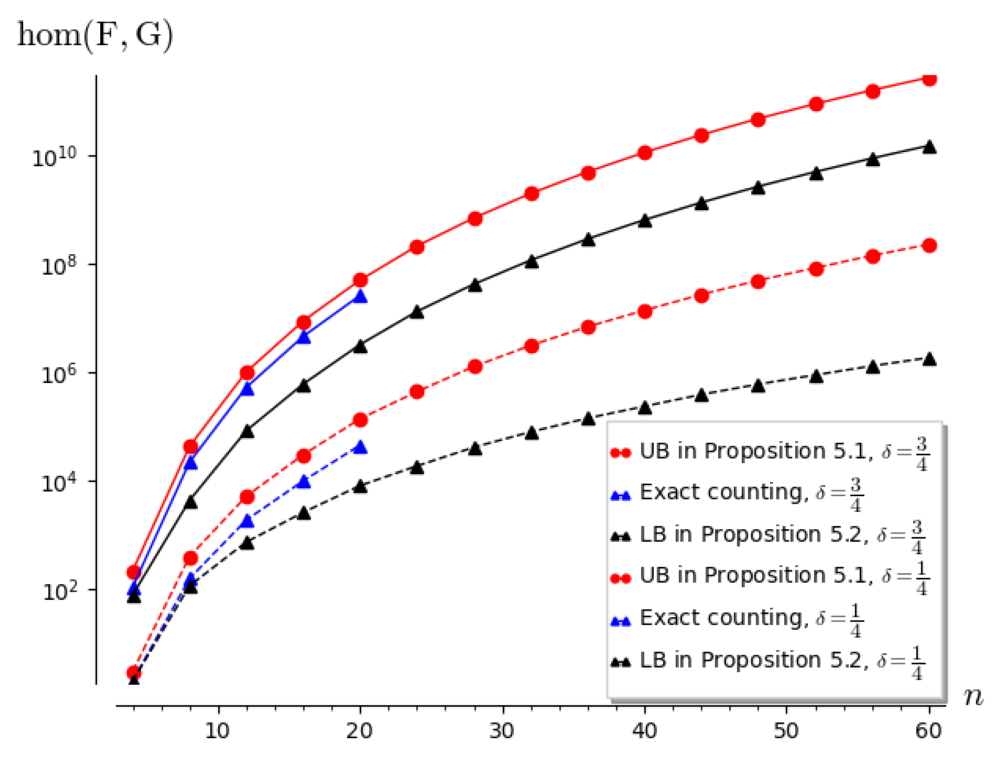}
    \label{fig:plot3_overall}
  \end{subfigure}
  \caption{Exact values and upper and lower bounds on the number of homomorphisms from a fixed noncomplete bipartite graph $\Gr{F}$,
  depicted in the upper plot, to a random bipartite graph $\Gr{G}$ on $n$ vertices, chosen uniformly at random among the bipartite graphs
  with equal-sized partite sets ($n_1 = n_2 = \tfrac{n}{2}$) and a fixed edge density $\delta = \delta(\Gr{G})$. The upper three curves
  in the lower plot correspond to $\delta = \tfrac{3}{4}$ (solid lines), while the lower three curves correspond to $\delta = \tfrac{1}{4}$
  (dashed lines).}
  \label{figure3}
\end{figure}

\begin{figure}[htbp]
  \centering
  \begin{subfigure}[b]{0.75\textwidth}
    \includegraphics[width=\textwidth]{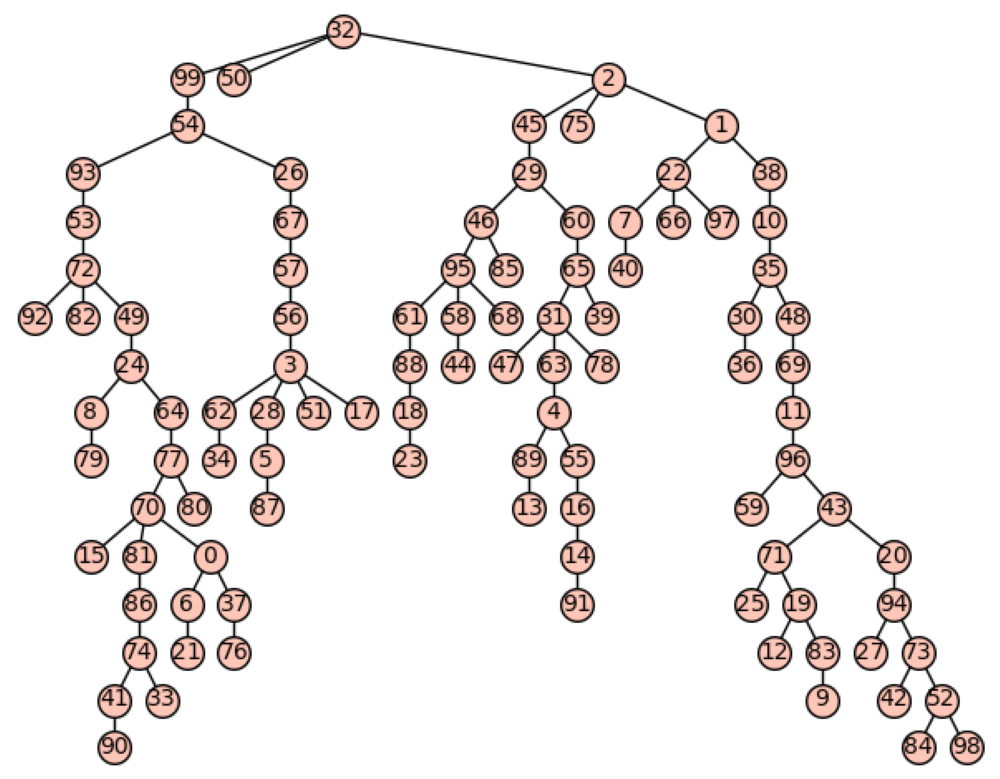}
    \label{fig:plot4-6_G}
  \end{subfigure}
  \begin{subfigure}[b]{0.8\textwidth}
    \includegraphics[width=\textwidth]{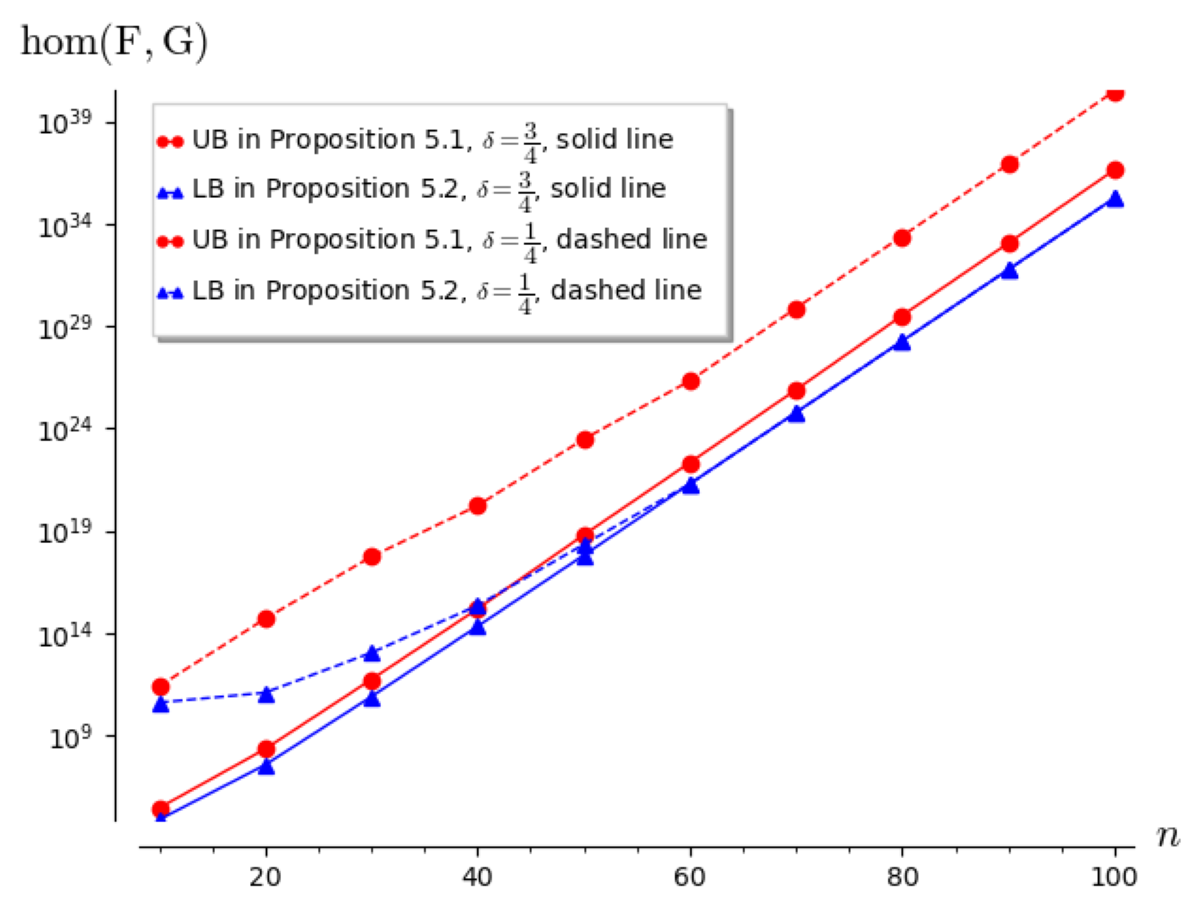}
    \label{fig:plot4}
  \end{subfigure}
  \caption{Average upper and lower bounds on the number of homomorphisms from a random bipartite graph $\Gr{F}$ to a fixed
  tree $\Gr{G}$ on 100 vertices, as depicted in the upper plot. For each experiment, the source graph $\Gr{F}$ is drawn
  uniformly at random among the bipartite graphs with $n$ vertices, equal-sized partite sets, and a fixed edge density
  $\delta = \delta(\Gr{F})$. The solid and dashed curves correspond to $\delta = \tfrac{3}{4}$ and $\delta = \tfrac{1}{4}$,
  respectively.}
  \label{figure4}
\end{figure}

Figure~\ref{figure4} illustrates the complementary setting in which the target graph $\Gr{G}$ is kept fixed,
while the source graph $\Gr{F}$ is selected uniformly at random, subject to prescribed partite set sizes and
a specified edge density $\delta \triangleq \delta(\Gr{F})$. It shows average upper and lower bounds on the
number of homomorphisms from a random bipartite graph $\Gr{F}$ to a fixed tree $\Gr{G}$. The target graph
$\Gr{G}$, depicted in the upper plot, is a fixed tree on 100~vertices. For each experiment, the source graph
$\Gr{F}$ is drawn uniformly at random from the set of bipartite graphs with $n$ vertices, equal-sized partite
sets, and a fixed edge density $\delta$. The two solid curves correspond to $\delta = \tfrac{3}{4}$ (upper and
lower bounds), while the two dashed curves correspond to $\delta = \tfrac{1}{4}$. Each point represents an
average over 50~independent random instances, with identical $\Gr{F}$ samples used to compute both bounds.
The horizontal axis represents the order $n$ of the source graph $\Gr{F}$, ranging from~10 to~100.

In this complementary setting, in contrast to Figure~\ref{figure3}, the solid curves in Figure~\ref{figure4}
(upper and lower bounds) corresponding to the larger edge density of $\tfrac{3}{4}$ lie below the corresponding
dashed curves for the smaller edge density of $\tfrac{1}{4}$. This is expected, as the function
$f \colon [0,1] \to \Reals^{+}$ given by
\begin{align}
\label{eq2: 07.08.25}
f(\delta) \triangleq \Expecwrt{\Gr{F} \sim U(\set{G}^{(\delta)})}{\homcount{\Gr{F}}{\Gr{G}}}, \quad \delta \in [0,1],
\end{align}
where $\Gr{G}$ is a fixed bipartite graph, is a monotonically decreasing function. Indeed, similarly to the previous
justification, if $\Gr{F}_1$ is a spanning subgraph of a graph $\set{F}_2$, then $\homcount{\Gr{F}_1}{\Gr{G}}
\geq \homcount{\Gr{F}_2}{\Gr{G}}$ for every graph $\Gr{G}$.

Figure~\ref{figure4} shows that the gap, in logarithmic scale, between the upper and lower bounds narrows as the edge
density $\delta$ of the source graph increases. This is supported by the fact that the upper bound in
Proposition~\ref{proposition: UB on hom(BG, BG)} is tight when the source graph $\Gr{F}$ is a complete bipartite graph.
Furthermore, the segments of the curves corresponding to the lower bounds at both values of $\delta$ coincide for large
values of $n$; as it is next shown, this is justified by combining Corollary~\ref{corollary: K_{p,q} --> tree},
Lemma~\ref{lemma: auxiliary result}, and Corollary~\ref{corollary: LB on homcount(F,G)}. For each selection of a source
graph $\Gr{F}$ from the set of bipartite graphs on $n = v(\Gr{F})$ vertices with equal-sized partite sets of size
$\frac{n}{2}$ and edge density $\delta = \delta(\Gr{F})$, the lower bound on $\homcount{\Gr{F}}{\Gr{G}}$, where $\Gr{G}$
is a fixed tree on $v(\Gr{G})$ vertices, is given by
\begin{align}
\label{eq3: 07.08.25}
& \homcount{\CoBG{\frac{n}{2}}{\frac{n}{2}}}{\Gr{G}} + \Bigl( \frac{n^2}{4} - \card{\E{\Gr{F}}} \Bigr) \, \eta_{\frac{n}{2}, \frac{n}{2}}(\Gr{G}) \\
\label{eq4: 07.08.25}
&= \homcount{\CoBG{\frac{n}{2}}{\frac{n}{2}}}{\Gr{G}} + \tfrac14 \, \bigl(1-\delta(\Gr{F}) \bigr) \, \eta_{\frac{n}{2}, \frac{n}{2}}(\Gr{G}) \, n^2 \\
\label{eq5: 07.08.25}
&= 2 \sum_{w \in \V{\Gr{G}}} d_{\Gr{G}}(w)^{\frac{n}{2}} - 2 \bigl(v(\Gr{G}) - 1) + \tfrac14 \, \bigl(1-\delta(\Gr{F}) \bigr) \, \eta_{\frac{n}{2}, \frac{n}{2}}(\Gr{G}) \, n^2
\end{align}
where \eqref{eq3: 07.08.25} holds by Corollary~\ref{corollary: LB on homcount(F,G)} and the two partite sets of $\Gr{F}$ are of size $\frac{n}{2}$;
\eqref{eq4: 07.08.25} holds since the number of edges in $\Gr{F}$ equals $\tfrac14 n^2 \delta(\Gr{F})$,
and \eqref{eq5: 07.08.25} holds by applying Corollary~\ref{corollary: K_{p,q} --> tree} to provide a closed-form expression on the number of
homomorphisms from the complete bipartite graph $\CoBG{\frac{n}{2}}{\frac{n}{2}}$ to the tree $\Gr{G}$. We next show that for tree $\Gr{G}$,
the nonnegative constant $\eta_{\frac{n}{2}, \frac{n}{2}}(\Gr{G})$ (see \eqref{eq6: 24.07.25}) is bounded above by a constant that does not depend
on the order $n$ of the graph $\Gr{F}$. Indeed, by the definition of the set $\set{D}$ in \eqref{eq5: 24.07.25}, for every two vertices $u,v \in \Gr{G}$
and $w \in \set{N}_{\Gr{G}}(u)$, since by definition $\{u,v\} \not\in \E{\Gr{G}}$, we have that $v \neq w$. Moreover, since $\Gr{G}$ is a tree
(i.e., a connected acyclic graph), the condition $\set{N}_{\Gr{G}}(v) \cap \set{N}_{\Gr{G}}(w) \neq \es$ implies
that $\bigcard{\set{N}_{\Gr{G}}(v) \cap \set{N}_{\Gr{G}}(w)} = 1$, as otherwise $\Gr{G}$ would have a 4-cycle. Consequently, by
\eqref{eq5: 24.07.25}, for a tree $\Gr{G}$ on $v(\Gr{G})$ vertices
\begin{align}
0 &\leq \eta_{\frac{n}{2}, \frac{n}{2}}(\Gr{G}) \nonumber \\
\label{eq6: 07.08.25}
&\leq \sum_{(u,v) \in \set{D}} d_{\Gr{G}}(u) \\
\label{eq7: 07.08.25}
&\leq v(\Gr{G}) \, \sum_{u \in \V{\Gr{G}}} d_{\Gr{G}}(u) \\
\label{eq8: 07.08.25}
&= 2 \, v(\Gr{G}) \, e(\Gr{G}) \\
\label{eq9: 07.08.25}
&= 2 \, v(\Gr{G}) \, \bigl(v(\Gr{G}) - 1 \bigr),
\end{align}
where \eqref{eq6: 07.08.25} holds by \eqref{eq5: 24.07.25} and \eqref{eq6: 24.07.25}, using the uniqueness of the
intersection of neighborhoods of distinct vertices in trees, and \eqref{eq9: 07.08.25} relies on the fact that a
tree has exactly one fewer edge than vertices.
The upper bound in \eqref{eq9: 07.08.25} depends only on the order of $\Gr{G}$, not on $n$.
If $\Gr{G}$ is a the tree and $n := v(\Gr{F}) \geq 2$, then by Jensen's inequality
\begin{align}
\sum_{w \in \V{\Gr{G}}} d_{\Gr{G}}(w)^{\frac{n}{2}} & \geq
v(\Gr{G}) \, \left( \frac{1}{v(\Gr{G})} \sum_{w \in \V{\Gr{G}}} d_{\Gr{G}}(w) \right)^{\frac{n}{2}} \nonumber \\
&= v(\Gr{G}) \, \left(\frac{2 \, e(\Gr{G})}{v(\Gr{G})}\right)^{\frac{n}{2}} \nonumber \\
\label{eq: 20.05.2026}
&= v(\Gr{G}) \, \left(\frac{2 \bigl(v(\Gr{G}) - 1 \bigr)}{v(\Gr{G})} \right)^{\frac{n}{2}}.
\end{align}
Combining \eqref{eq6: 07.08.25}--\eqref{eq: 20.05.2026}, it follows that if $\Gr{G}$ is a tree on at least three vertices
(so $\frac{2(v(\Gr{G})-1)}{v(\Gr{G})} \geq \tfrac{4}{3} > 1$) and $n$ is sufficiently large, then the lower bound on the
right-hand side of \eqref{eq5: 07.08.25} depends only weakly on $\delta = \delta(\Gr{F})$; moreover, this dependence
vanishes as $n \to \infty$. This provides an analytical justification for the coincidence, for sufficiently large $n$,
of the lower bounds corresponding to $\delta \in \bigl\{ \tfrac14, \tfrac34 \bigr\}$ in Figure~\ref{figure4}.
In this figure, the target graph $\Gr{G}$ is a fixed tree on~100 vertices, and the source graph $\Gr{F}$ is a uniformly
sampled bipartite graph on a fixed number $n \geq 10$ of vertices with equal-sized partite sets and edge density~$\delta$.

To conclude, the numerical experiments presented in this section confirm the effectiveness of the computationally tractable bounds
derived in this paper, and their improvement over existing ones such as the lower bound that appears in Sidorenko's conjecture.
For complete bipartite source graphs, the entropy-based lower bounds in Propositions~\ref{prop.: IT-LB} and~\ref{prop.: refined IT-LB}
consistently outperform Sidorenko’s bound, as formally demonstrated in Discussion~\ref{discussion: Comparison of the 1st IT LB to Sidorenko's lower bound}.
They become tight as the target’s edge density approaches~1, while the combinatorial bound in Proposition~\ref{proposition: LB and exact hom(CoBG,G)}
remains competitive at low densities and is exact for $\CG{4}$-free targets. In the non-complete bipartite source setting, the upper
and lower bounds in Propositions~\ref{proposition: UB on hom(BG, BG)} and~\ref{proposition: LB on hom(BG, BG)}, respectively, adhere to
the predicted monotonicity properties discussed above. The observed bounds are also in close agreement with exact homomorphism counts,
whenever the latter are available. In particular, the upper bound in Proposition~\ref{proposition: UB on hom(BG, BG)} is tight in the case
where the source graph is a disjoint union of complete bipartite graphs and isolated vertices. Overall, these numerical results provide
empirical evidence for the applicability of the proposed computationally tractable bounds over a broad range of bipartite graph configurations.

\section*{Acknowledgement}
The author thanks the reviewers for their meticulous comments, which improved the presentation of the paper, and the editors for inviting 
him to contribute to the EuroComb~2025 special issue and for handling the manuscript.


\begin{thebibliography}{99}
\small

\bibitem{FederVardi98}
T. Feder and M. Y. Vardi, ``The computational structure of monotone monadic SNP and constraint satisfaction:
a study through datalog and group theory,'' {\em SIAM Journal on Computing}, vol.~28, no.~1, pp.~57--104, January 1998.
\href{https://doi.org/10.1137/S0097539794266766}{here}

\bibitem{HellN26}
P. Hell and J. Ne\v{s}et\v{r}il, {\em Graphs and Homomorphisms}, second edition, Oxford University Press, 2026.
\href{https://doi.org/10.1093/oso/9780198708704.002.0004}{here}

\bibitem{HellN21}
P. Hell and J. Ne\v{s}et\v{r}il, ``Graphs and homomorphisms,'' {\em Topics in Algorithmic Graph Theory},
{\em Cambridge University Press} (edited by L. W. Beineke, M. C. Golumbic and R.~J.~Wilson), pp.~262--293, 2021.
\href{https://doi.org/10.1017/9781108592376}{here}

\bibitem{Lovasz12}
L. Lov\'{a}sz, {\em Large Networks and Graph Limits}, American Mathematical Society, 2012.
\href{https://doi.org/10.1090/coll/060}{here}

\bibitem{Zhao17}
Y. Zhao, ``Extremal regular graphs: independent sets and graph homomorphisms,'' {\em The American Mathematical Monthly},
vol.~124, no.~9, pp.~ 827--843, November 2017. \href{https://doi.org/10.4169/amer.math.monthly.124.9.827}{here}

\bibitem{GodsilRRSV19}
C. Godsil, D. E. Roberson, B. Rooney, R. \v{S}\'{a}mal, and A. Varvitsiotis, ``Graph homomorphisms via vector colorings,''
{\em European Journal of Combinatorics}, vol.~79, pp.~246--261, June 2019.
\href{https://doi.org/10.1016/j.ejc.2019.04.001}{here}

\bibitem{AlbertsonC85}
M. O. Albertson and K. L. Collins, ``Homomorphisms of 3-chromatic graphs,'' {\em Discrete Mathematics}, vol.~54,
no.~2, pp.~127--132, April 1985. \href{https://doi.org/10.1016/0012-365X(85)90073-1}{here}

\bibitem{NaserasrSZ21}
R. Naserasr, E. Sopena, and T. Zaslavsky, ``Homomorphisms of signed graphs: An update,'' {\em European Journal of Combinatorics},
vol.~91, pp.~1--20, January 2021. \href{https://doi.org/10.1016/j.ejc.2020.103222}{here}

\bibitem{HellN90}
P. Hell and J. Ne\v{s}etril, ``On the complexity of H-coloring,'' {\em Journal of Combinatorial Theory, Series B}, vol.~48, no.~1,
pp.~92--110, February 1990. \href{https://doi.org/10.1016/0095-8956(90)90132-J}{here}

\bibitem{DyerG00}
M. E. Dyer and C. S. Greenhill, ``The complexity of counting graph homomorphisms,'' {\em Random
Structures and Algorithms}, vol.~17, pp.~260--289, 2000.

\bibitem{DyerG04}
M. E. Dyer and C. S. Greenhill, ``Corrigendum: The complexity of counting graph homomorphisms,'' {\em Random Structures
and Algorithms}, vol~25, no.~3, pp.~346--352, October 2004. \href{https://doi.org/10.1002/rsa.20036}{here}

\bibitem{Hell06}
P. Hell, ``From graph colouring to constraint satisfaction: there and back again,''
{\em Topics in Discrete Mathematics}, pp.~315--371, Springer, 2006.
\href{https://doi.org/10.1007/3-540-33700-8_20}{here}

\bibitem{CaiCL15}
J. Y. Cai, X. Chen, and P. Lu, ``Complexity dichotomies for counting graph homomorphisms,'' {\em Encyclopedia of Algorithms},
pp.~1--5, Springer, 2015. \href{https://link.springer.com/rwe/10.1007/978-3-642-27848-8_747-1}{here}

\bibitem{CurticapeanDM17}
R. Curticapean, H. DellHolger, and D. Marx, ``Homomorphisms are a good basis for counting small subgraphs,''
{\em Proceedings of the 49th Annual ACM SIGACT Symposium on Theory of Computing (STOC 2017)}, pp.~210--223,
Montreal, Canada, June 2017. \href{http://dx.doi.org/10.1145/3055399.3055502}{here}

\bibitem{Lovasz67}
L. Lov\'{a}sz, ``Operations with structures,'' {\em Acta Mathematica Academiae Scientiarum Hungarica},
vol.~18, no.~3--4, pp.~321--328, 1967. \href{https://doi.org/10.1007/BF02280291}{here}

\bibitem{Sernau18}
L. Sernau, ``Graph operations and upper bounds on graph homomorphism counts,'' {\em Journal of Graph Theory},
vol.~87, no.~2, pp.~149--163, February 2018. \href{https://doi.org/10.1002/jgt.22148}{here}

\bibitem{GarijoGN11}
D. Garijo, A. Goodall, and J. Ne\v{s}et\v{r}il, ``Distinguishing graphs by their left and right homomorphism profiles,''
{\em European Journal of Combinatorics}, vol.~32, no.~7, pp.~1025--1053, October 2011. \href{https://doi.org/10.1016/j.ejc.2011.03.012}{here}

\bibitem{Dvorak10}
Z. Dvo\v{r}\'{a}k, ``On recognizing graphs by numbers of homomorphisms,'' {\em Journal of Graph Theory}, vol.~64, no.~4,
pp.~330--342, June 2010. \href{https://doi.org/10.1002/jgt.20461}{here}

\bibitem{Borgs06}
C. Borgs, J. Chayes, L. Lov\'{a}sz, V. T. S\'{o}s, and K. Vesztergombi, ``Counting graph homomorphisms,''
{\em Topics in Discrete Mathematics}, pp.~315--371, Springer, 2006.
\href{https://doi.org/10.1007/3-540-33700-8_18}{here}

\bibitem{KoppartyR11}
S. Kopparty and B. Rossman, ``The homomorphism domination exponent,'' {\em European Journal of Combinatorics},
vol.~32, no.~7, pp.~1097--1114, October 2011. \href{https://doi.org/10.1016/j.ejc.2011.03.009}{here}

\bibitem{GarijoNR09}
D. Garijo, J. Ne\v{s}et\v{r}il, M. P. Revuelta, ``Homomorphisms and polynomial invariants of graphs,''
{\em European Journal of Combinatorics}, vol.~30, no.~7, pp.~1659--1675, October 2009.
\href{https://doi.org/10.1016/j.ejc.2009.03.016}{here}

\bibitem{BrightwellW99}
G. Brightwell and P. Winkler, ``Graph homomorphisms and phase transitions,'' {\em Journal
of Combinatorial Theory, Series B}, vol.~77, no.~2, pp.~221--262, November 1999.
\href{https://doi.org/10.1006/jctb.1999.1899}{here}

\bibitem{Kahn02}
J. Kahn, ``Entropy, independent sets and antichains: a new approach to Dedekind's problem,''
{\em Proceedings of the American Mathematical Society}, vol.~130, no.~2, pp.~371--378, June 2001.
\href{https://doi.org/10.1090/S0002-9939-01-06058-0}{here}

\bibitem{GalvinT04}
D. Galvin and P. Tetali, ``On weighted graph homomorphisms,'' {\em DIMACS Series in Discrete Mathematics
and Theoretical Computer Science}, Graphs, Morphisms and Statistical Physics (J. Ne\v{s}etril and P. Winkler Editors),
American Mathematical Society, vol.~63, pp.~97--104, 2004. \href{https://doi.org/10.48550/arXiv.1206.3160}{here}

\bibitem{CsikvariRS22}
P. Csikv\'{a}ri, N. Ruozzi, and S. Shams, ``Markov random fields, homomorphism counting, and Sidorenko’s
conjecture,'' {\em IEEE Transactions on Information Theory}, vol.~68, no.~9, pp.~6052--6062, September~2022.
\href{https://doi.org/10.1109/TIT.2022.3169487}{here}

\bibitem{ShamsRC19}
S. Shams, N. Ruozzi, and P. Csikv\'{a}ri, ``Counting homomorphisms in bipartite graphs,'' {\em Proceedings
of the 2019 IEEE International Symposium on Information Theory}, pp.~1487--1491, Paris, France, July 2019.
\href{https://doi.org/10.1109/ISIT.2019.8849389}{here}

\bibitem{BaoJBCL25}
L. Bao, E. Jin, M. M. Bronstein, I. I. Ceylan, and M. Lanzinger, ``Homomorphism counts as structural encodings
for graph learning,'' {\em Proceedings of the Thirteenth International Conference on Learning Representations
(ICLR 2025)}, pp.~1--29, Singapore, April 2025. \href{https://doi.org/10.48550/arXiv.2410.18676}{here}

\bibitem{Sidorenko94}
A. Sidorenko. ``A partially ordered set of functionals corresponding to graphs,'' {\em Discrete Mathematics}, vol.~131,
no.~1--3, pp.~263--277, August 1994. \href{https://doi.org/10.1016/0012-365X(94)90388-3}{here}

\bibitem{CsikvariL14}
P. Csikv\'{a}ri and Z. Lin, ``Graph homomorphisms between trees,'' {\em The Electronic Journal of Combinatorics},
vol.~21, no.~4, article P4.9, pp.~1--38, October 2014. \href{https://doi.org/10.37236/4096}{here}

\bibitem{LevinP17}
D. A. Levin and Y. Peres, ``Counting walks and graph homomorphisms via Markov chains and importance sampling,''
{\em The American Mathematical Monthly}, vol.~124, no.~7, pp.~637--641, August-September 2017.
\href{https://doi.org/10.4169/amer.math.monthly.124.7.637}{here}

\bibitem{GopalanSW16}
P. Gopalan, R. A. Servedio, and A. Wigderson, ``Degree and sensitivity: tails of two distributions,'' {\em Proceedings
of the 31st Conference on Computational Complexity}, article no.~13, pp.~1--23, May 2016.
\href{https://drops.dagstuhl.de/storage/00lipics/lipics-vol050-ccc2016/LIPIcs.CCC.2016.13/LIPIcs.CCC.2016.13.pdf}{here}

\bibitem{Zhao23}
Y. Zhao, {\em Graph Theory and Additive Combinatorics: Exploring Structures and Randomness}, Cambridge University
Press, 2023. \href{https://doi.org/10.1017/9781009310956}{here}

\bibitem{AignerZ18}
M. Aigner and G. M. Ziegler, {\em Proofs from THE BOOK}, Sixth Edition,
Springer, Berlin, 2018. \href{https://doi.org/10.1007/978-3-662-57265-8}{here}

\bibitem{Jukna11}
S. Jukna, {\em Extremal Combinatorics with Applications in Computer Science}, second edition,
Springer, 2011. \href{https://doi.org/10.1007/978-3-642-17364-6}{here}

\bibitem{Szegedy15a}
B. Szegedy, ``An information theoretic approach to Sidorenko’s conjecture,'' preprint, January 2015.
\href{https://arxiv.org/abs/1406.6738v3}{here}

\bibitem{Szegedy15b}
B. Szegedy, ``Sparse graph limits, entropy maximization and transitive graphs,'' preprint, April 2015.
\href{https://arxiv.org/abs/1504.00858v1}{here}

\bibitem{ChungGFS86}
F. R. K. Chung, L. R. Graham, P. Frankl, and J. B. Shearer, ``Some intersection
theorems for ordered sets and graphs," {\em Journal of Combinatorial Theory,
Series~A}, vol.~43, no.~1, pp.~23--37, 1986. \href{https://doi.org/10.1016/0097-3165(86)90019-1}{here}

\bibitem{ConlonKLL18}
D. Conlon, J. H. Kim, C. Lee, and J. Lee, ``Some advances on Sidorenko’s conjecture,'' {\em Journal of the London
Mathematical Society}, vol.~98, no.~3, pp.~593--608, December 2018. \href{https://doi.org/10.1112/jlms.12142}{here}

\bibitem{FriedgutK98}
E. Friedgut and J. Kahn, ``On the number of copies of one hypergraph in another,'' {\em Israel Journal of Mathematics},
vol.~105, pp.~251--256, 1998. \href{https://doi.org/10.1007/BF02780332}{here}

\bibitem{Friedgut04}
E. Friedgut, ``Hypergraphs, entropy, and inequalities,'' {\em The American Mathematical Monthly},
vol.~111, no.~9, pp.~749--760, November 2004. \href{https://doi.org/10.2307/4145187}{here}

\bibitem{HatamiJS18}
H. Hatami, S. Janson, and B. Szegedy, ``Graph properties, graph limits, and entropy,''
{\em Journal of Graph Theory}, vol.~87, no.~2, pp.~208--229, February 2018.
\href{https://doi.org/10.1002/jgt.22152}{here}

\bibitem{Galvin14}
D. Galvin, ``Three tutorial lectures on entropy and counting,'' {\em Proceedings of the 1st Lake
Michigan Workshop on Combinatorics and Graph Theory}, Kalamazoo, MI, USA, March 2014.
\href{https://doi.org/10.48550/arXiv.1406.7872}{here}

\bibitem{Kahn01}
J. Kahn, ``An entropy approach to the hard-core model on bipartite graphs,'' {\em Combinatorics,
Probability and Computing}, vol.~10, no.~3, pp.~219--237, May 2001.
\href{https://doi.org/10.1017/S0963548301004631}{here}

\bibitem{MadimanT_IT10}
M. Madiman and P. Tetali, ``Information inequalities for joint distributions,
interpretations and applications," {\em IEEE Transactions on Information Theory},
vol.~56, no.~6, pp.~2699--2713, June 2010.
\href{https://doi.org/10.1109/TIT.2010.2046253}{here}

\bibitem{Radhakrishnan97}
J. Radhakrishnan, ``An entropy proof of Bregman's theorem,'' {\em Journal of Combinatorial Theory,
Series~A}, Elsevier Science, vol.~77, no.~1, pp.~161--164, January 1997.
\href{https://doi.org/10.1006/jcta.1996.2727}{here}

\bibitem{Radhakrishnan01}
J. Radhakrishnan, ``Entropy and counting,'' {\em Proceedings of the IIT Kharagpur, Golden Jubilee Volume
on Computational Mathematics, Modelling and Algorithms}, Narosa Publishers, New Delhi, India, pp.~1--25, 2001.
\href{https://www.tcs.tifr.res.in/~jaikumar/Papers/EntropyAndCounting.pdf}{here}

\bibitem{Sason21}
I. Sason, ``A generalized information-theoretic approach for bounding the number of independent
sets in bipartite graphs,'' {\em Entropy}, vol.~23, no.~3, paper~270, pp.~1--14, 2021.
\href{https://doi.org/10.3390/e23030270}{here}

\bibitem{Sason21b}
I. Sason, ``Entropy-based proofs of combinatorial results on bipartite graphs,''
{\em Proceedings of the 2021 IEEE International Symposium on Information Theory},
pp.~3225--3230, Melbourne, Australia, July 12--20, 2021.
\href{https://doi.org/10.1109/ISIT45174.2021.9518068}{here}

\bibitem{Sason25}
I. Sason, ``On H-intersecting graph families and counting of homomorphisms,''
{\em AIMS Mathematics}, vol.~10, no.~3, paper~290, pp.~6355--6378, March 2025.
\href{https://doi.org/10.3934/math.2025290}{here}

\bibitem{WangTL23}
Z. Wang, J. Tu, and R. Lang, ``Entropy, graph homomorphisms, and dissociation sets,'' {\em Entropy}, vol.~25,
paper.~163, pp.~1--11, January 2023. \href{https://www.mdpi.com/1099-4300/25/1/163}{here}

\bibitem{SahSSZ20}
A. Sah, M. Sawhney, D. Stoner, and Y. Zhao, ``A reverse Sidorenko inequality,'' {\em Inventiones Mathematicae}, vol.~221, pp.~665--711,
August 2020. \href{https://doi.org/10.1007/s00222-020-00956-9}{here}

\bibitem{GareyJ79}
M. R. Garey and D. S. Johnson, {\em Computers and Intractability: A Guide to the Theory of NP-Completeness},
W. H. Freeman and Company, San Francisco, 1979.

\bibitem{CoverT06}
T. M. Cover and J. A. Thomas, {\em Elements of Information Theory}, second edition,
John Wiley \& Sons, 2006. \href{https://doi.org/10.1002/047174882X}{here}

\bibitem{GrahamKP89}
R. L. Graham, D. E. Knuth, and O. Patashnik, {\em Concrete Mathematics: A Foundation for Computer Science},
second edition, Addison–Wesley, 1989.

\bibitem{Sidorenko93}
A. Sidorenko, ``A correlation inequality for bipartite graphs,'' {\em Graphs and Combinatorics}, vol.~9, pp.~201--204,
June 1993. \href{https://doi.org/10.1007/BF02988307}{here}

\bibitem{Simonovits84}
M. Simonovits, ``Extremal graph problems, degenerate extremal problems and super-saturated graphs,''
Progress in graph theory, pp.~419--437, Academic Press, Toronto, Ontario, Canada, 1984.
\href{https://users.renyi.hu/~miki/waterloo.pdf}{here}

\bibitem{ConlonFS10}
D. Conlon, J. Fox, and B. Sudakov, ``An approximate version of Sidorenko’s conjecture,'' {\em  Geometric
and Functional Analysis}, vol.~20, pp.~1354--1366, October 2010. \href{https://doi.org/10.1007/s00039-010-0097-0}{here}

\bibitem{ConlonFS10b}
D. Conlon, J. Fox, and B. Sudakov, ``Sidorenko's conjecture for a class of graphs: an exposition,'' {\em unpublished note}, 2010.
\href{https://arxiv.org/abs/1209.0184}{here}

\end{thebibliography}
\end{document}